\let\oldproofname=\proofname
\renewcommand{\proofname}{\rm\bf{\oldproofname}}
\newtheorem{thm}{Theorem}
\newtheorem{lem}[thm]{Lemma}
\newtheorem{prop}[thm]{Proposition}
\newtheorem{cor}[thm]{Corollary}
\newcommand{\neutralize}[1]{\expandafter\let\csname c@#1\endcsname\count@}
\newenvironment{thmbis}[1]
  {%
   \neutralize{thm}\phantomsection
   \begin{thm}}
  {\end{thm}}
\theoremstyle{definition}
\newtheorem{defn}{Definition}[section]
\theoremstyle{remark}
\newtheorem{remark}{Remark} 
\theoremstyle{plain}
\def\CC{{\mathbb C}}
\def\HH{{\mathbb H}}
\def\NN{{\mathbb N}}
\def\RR{{\mathbb R}}
\def\ZZ{{\mathbb Z}}
\def\vecu{{\text{\boldmath$u$}}}
\def\vecv{{\text{\boldmath$v$}}}
\def\vecw{{\text{\boldmath$w$}}}
\def\scrC{{\mathcal C}}
\def\scrF{{\mathcal F}}
\def\scrH{{\mathcal H}}
\def\scrI{{\mathcal I}}
\def\scrM{{\mathcal M}}
\def\scrP{{\mathcal P}}
\def\scrS{{\mathcal S}}
\def\scrU{{\mathcal U}}
\def\scrY{{\mathcal Y}}
\def\scrZ{{\mathcal Z}}
\def\fB{{\mathfrak B}}
\def\Re{\operatorname{Re}}
\def\Im{\operatorname{Im}}
\def\G{\operatorname{G{}}}
\def\S{\operatorname{S{}}}
\def\SL{\operatorname{SL}}
\def\Onder#1#2#3#4#5{#1 \setbox0=\hbox{$#1$}\setbox1=\hbox{$#2$}
       \dimen0=.5\wd0 \dimen1=\dimen0 \dimen2=\dp0 \dimen3=\dimen2
       \advance\dimen0 by .5\wd1 \advance\dimen0 by -#4
       \advance\dimen1 by -.5\wd1 \advance\dimen1 by -#4
       \advance\dimen2 by -#3 \advance\dimen2 by \ht1
       \advance\dimen2 by 0.3ex \advance\dimen3 by #5
        \kern-\dimen0\raisebox{-\dimen2}[0ex][\dimen3]{\box1}
       \kern\dimen1}
\newcommand{\GaG}{\Gamma\backslash G}
\newcommand{\sfrac}[2]{{\textstyle \frac {#1}{#2}}}
\newcommand{\matr}[4]{\left( \begin{matrix} #1 & #2 \\ #3 & #4 \end{matrix} \right) }
\newcommand{\smatr}[4]{\left( \begin{smallmatrix} #1 & #2 \\ #3 & #4 \end{smallmatrix} \right) }
\newcommand{\fg}{\mathfrak{g}}
\begin{document}
\title[Equidistribution of expanding horospheres]{On the rate of equidistribution of expanding horospheres in finite-volume quotients of $\mathrm{SL}(2,\mathbb{C})$.}

\author{Samuel Edwards}

\address{}
\email{}


\date{\today}
\thanks{}

\subjclass[2000]{}

\keywords{}

\begin{abstract}
Let $\Gamma$ be a lattice in $G=\mathrm{SL}(2,\mathbb{C})$. We give an effective equidistribution result with precise error terms for expanding translates of pieces of horospherical orbits in $\Gamma\backslash G$. Our method of proof relies on the theory of unitary representations.
\end{abstract}

\maketitle
\section{Introduction}\label{intro}
Let $G$ be a connected Lie group, and $\Gamma$ a lattice in $G$. Various properties of orbits of horospherical subgroups of $G$ in the homogeneous space $\GaG$ have been studied by a multitude of authors. In particular, the effective equidistribution of expanding translates of such orbits has been established, cf. e.g. Kleinbock and Margulis \cite{KleinMarg}. 

For $G=\SL(2,\RR)$, precise results relating the rate of equidistribution of orbits of the horocycle flow with the spectral theory of $\GaG$ have been obtained by a number of authors, e.g. Burger \cite{Burger90}, Flaminio and Fornio \cite{FlamForn}, Hejhal \cite{Hejhal}, Sarnak \cite{Sarnak}, Selberg (unpublished), Str\"ombergsson \cite{Strom, Strom2}, and Zagier \cite{Zagier}. One of the key parts of \cite{Burger90} is an integral formula \cite[Lemma 1 (A)]{Burger90} for certain operators in irreducible unitary representations of $\SL(2,\RR)$. More specifically, the formula relates horospherical averages with the action of a corresponding diagonal subgroup in an arbitrary irreducible unitary representation. One of the results in \cite{Burger90} obtained by use of the formula is an explicit rate of equidistribution for averages along horocycle orbits when $\Gamma$ is \emph{cocompact}. In \cite{Strom}, use of the same formula is extended to prove explicit rates of equidistribution for \emph{non-cocompact} lattices. In an ongoing project, we aim to generalize this method to obtain rates of equidistribution for other Lie groups. This note is a first report on this project, discussing only the case of $\SL(2,\CC)$. This case has the benefits of allowing us to be completely explicit, and permitting comparisons with similar previously known results for hyperbolic 3-orbifolds. As we shall we see, a number of complications arise compared with $\SL(2,\RR)$, both in expressing an integral formula corresponding to that of Burger (which we do in Proposition \ref{Intformlem}), as well as in the application of it to the problem of obtaining explicit rates of equidistribution.

From now on let $G=\SL(2,\CC)$, and $\Gamma$ a lattice in $G$. We denote by $\mu_G$ the unique Haar measure on $G$ such that the pushforward measure $\mu$ (under the map $g\mapsto \Gamma g$) on $\GaG$ is a probability measure. The group $G$ acts on $\GaG$ by right translation, and the action of the following subgroups of $G$ will be of particular interest to us:
\begin{equation*}
A=\left\lbrace a_t=\smatr {e^{t/2}} {0} {0} {e^{-t/2}} \;:t\in\RR\right\rbrace,
\end{equation*}
and
\begin{equation*}
N=\lbrace n_z=\smatr {1} {z} {0} {1} \;:z\in\CC\rbrace.
\end{equation*}
Note that $N$ is a horospherical subgroup of $G$, relative to $A$, i.e.
\begin{equation*}
N=\left\lbrace g\in G\,:\, \lim_{t\rightarrow \infty} a_{-t} g a_{t} = \smatr 1 0 0 1 \right\rbrace.
\end{equation*}
Let $\mu_N$ denote the Haar measure on $N$, chosen so that $\mu_N$ is the pushforward measure of the Lebesgue measure on $\CC$ under the map $z\mapsto n_z$. The maximal compact subgroup $\mathrm{SU}(2)$ of $G$ is denoted by $K$. The symmetric space $G/K$ can be identified with the three-dimensional hyperbolic upper half-space, and $\scrM:=\GaG/K$ is a finite-volume hyperbolic 3-orbifold ($\scrM$ is a manifold if $\Gamma$ is torsion-free). We use $\Delta$ to denote the Laplace-Beltrami operator on $\scrM$. Let $\lambda_1$ be the smallest positive eigenvalue for $-\Delta$ acting on $L^2(\scrM)$, and define $s_1\in [1,2)$ by
\begin{equation*}
s_1=\begin{cases} 1+\sqrt{1-\lambda_1}&\quad \mathrm{if}\,\,\lambda_1\in (0,1)\\ 1&\quad \mathrm{otherwise}.\end{cases}
\end{equation*}
In order to state our main result, we must introduce a Sobolev norm $\|\cdot\|_{W^m}$ on functions in $L^2(\GaG)$; it is discussed in greater detail in Section \ref{L2decomp}. We also let $\scrY_{\Gamma}$ denote the \emph{invariant height function} on $\GaG$. A stringent definition of $\scrY_{\Gamma}$ is given in Section \ref{HgtSec} for non-cocompact $\Gamma$, and in Section \ref{Sobbds} for cocompact $\Gamma$. For now it suffices to view this function as a measure of how far into a cusp of $\GaG$ a point $p$ lies; for a fixed $x_0\in \mathcal{M}$, let $\mathrm{dist}(p)$ denote the distance between $x_0$ and the image of $p$ in $\scrM$. Then $\scrY_{\Gamma}(p)$ is comparable to $e^{\mathrm{dist}(p)}$.
\begin{thm}\label{mainthm}
Let $B'$ be a connected compact subset of positive Lebesgue measure in $\CC$ and assume that there exists a piecewise smooth, bi-Lipschitz mapping of the circle to $\partial B'$. If $B=\lbrace n_z\in N\,:\; z\in B'\rbrace$, then there exists a constant $C(\Gamma,B')>0$ such that for all $T\geq 0$, all $p\in \GaG$, and all $f\in L^2(\GaG)$ with $\|f\|_{W^7}<\infty$, 
 \begin{align*}
\left| \!\frac{1}{\mu_N(B)}\!\!\int_{B}\!\! f\big(pna_{-T}\big)\,d\mu_N(n)\!-\!\int_{\GaG}\!f\,d\mu\right|\!\leq C(\Gamma,B')\|f\|_{W^7} \Big\lbrace  \big(e^{-T}\scrY_{\Gamma}(p)&\big)^{2-s_1}+e^{-T}T^4\\&+e^{-T}(1+T^3)\scrY_{\Gamma}(p)\! \Big\rbrace.
\end{align*}
\end{thm}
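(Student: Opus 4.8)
The plan is to reduce the horospherical average to the action of the diagonal flow in irreducible unitary representations, via the integral formula of Proposition \ref{Intformlem}, and then to estimate the resulting expression spectrally. First I would decompose $L^2(\GaG)$ into irreducible unitary $G$-subrepresentations; the relevant ones are the trivial representation (which produces the main term $\int_{\GaG}f\,d\mu$), the complementary series representations (parametrized by $s\in(1,2)$, with the largest contribution coming from the exceptional eigenvalue $\lambda_1$ and hence the exponent $s_1$), and the principal series and discrete series representations. Write $f=\int^{\oplus}f_\pi$ accordingly. In each irreducible component, the function $T\mapsto \frac{1}{\mu_N(B)}\int_B f_\pi(pna_{-T})\,d\mu_N(n)$ should, by the $\SL(2,\CC)$-analogue of Burger's formula, be expressible in terms of matrix coefficients of the operators $\pi(a_{-T})$ applied to suitable vectors built from $f_\pi$ and the geometry of $B'$.

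The key steps, in order, would be: (i) invoke Proposition \ref{Intformlem} to rewrite the $B$-average in a fixed irreducible $\pi$ as (essentially) an integral over $\partial B'$ — this is where the bi-Lipschitz, piecewise-smooth boundary hypothesis enters, controlling the error from approximating the region $B'$ by horospherical pieces — of matrix coefficients $\langle \pi(a_{-T})v, w\rangle$; (ii) bound these matrix coefficients: for the complementary series of parameter $s$ one gets decay $e^{-(2-s)T}$ times a polynomial in $T$, for the principal series and discrete series one gets $e^{-T}$ times a polynomial in $T$ (the polynomial powers of $T$, up to $T^4$ resp. $T^3$, arise from differentiating the matrix coefficient expansions and account for the $T^4$ and $1+T^3$ factors in the statement); (iii) handle the dependence on the base point $p$: the factor $\scrY_\Gamma(p)$ and the $(e^{-T}\scrY_\Gamma(p))^{2-s_1}$ term come from the behaviour of the relevant vectors/smooth vectors when $p$ is deep in a cusp, where one must pass to a Siegel-type coordinate and estimate how far $pna_{-T}$ can travel into the cusp as $n$ ranges over $B$ — this is also where the $W^7$ Sobolev norm is consumed, to make the termwise spectral estimates summable and to bound the relevant vectors in each $\pi$; (iv) reassemble the pieces, bound the sum over the spectrum by the Sobolev norm (using that sufficiently high Sobolev regularity dominates the spectral parameters), and collect the three error terms.

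The main obstacle I expect is step (iii) together with the bookkeeping in step (ii): unlike the $\SL(2,\RR)$ case, the $\SL(2,\CC)$ matrix-coefficient asymptotics involve extra parameters (the $K=\mathrm{SU}(2)$-types are no longer one-dimensional), so the integral formula is genuinely more complicated and the resulting "boundary integral" over $\partial B'$ must be estimated carefully — in particular getting the correct powers of $T$ and the correct exponent $2-s_1$ on $\scrY_\Gamma(p)$, uniformly in $p$ and $T$, rather than losing logarithmic or polynomial factors. A secondary difficulty is ensuring the error from the geometric approximation of $B'$ (controlled via the bi-Lipschitz parametrization of $\partial B'$) interacts cleanly with the spectral decay, so that the constant $C(\Gamma,B')$ depends only on $\Gamma$ and $B'$ and not on $p$, $T$, or $f$.
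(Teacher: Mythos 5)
Your outline follows the paper's strategy in broad terms (spectral decomposition of $L^2(\GaG)$, the Burger-type integral formula of Proposition \ref{Intformlem}, Sobolev pointwise bounds to handle the cusp), but there are two genuine gaps, plus a small error: $\SL(2,\CC)$ has no discrete series representations, so the unitary dual you should be decomposing over consists only of the trivial representation, the principal series $\scrP^{(n,\nu)}$ and the complementary series $\scrP^{(0,\nu)}$, $\nu\in(0,2)$.

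The first gap is your treatment of the tempered part. You write $f=\int^{\oplus}f_{\pi}$ and propose to apply the integral formula and the pointwise/matrix-coefficient estimates in each irreducible $\pi$ separately, then ``reassemble.'' For the finitely many complementary-series components this works, since each $f_m$ is an honest function on $\GaG$ to which Lemma \ref{reslem1} applies. But the tempered part is a direct \emph{integral}, and the individual components $f_{\zeta}$ are not functions on $\GaG$ that can be evaluated at $p$; the Sobolev bounds of Section \ref{Sobbds} only apply to genuine elements of $W^m(\GaG)$. The paper resolves this by turning the $\zeta$-dependent scalar coefficients $F^{\zeta}(T,t)$, $F_i^{\zeta}(T)$ from Proposition \ref{Intformlem} into $G$-intertwining operators $Q(T,t)$, $Q_i(T)$ as in \eqref{inter1}, bounded on Sobolev spaces (using \eqref{eigen1}--\eqref{eigen2} to absorb the polynomial growth in $\nu$ into derivatives by $\Omega_1$), and only \emph{then} evaluating the resulting functions $Q(T,t)f_{\mathit{temp},j}$ at $p$. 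Without this step your plan of ``bounding the sum over the spectrum'' has no pointwise meaning. The second gap concerns the role of the boundary $\partial B'$. The contour integral does not arise from ``approximating $B'$ by horospherical pieces''; it comes from Green's theorem applied to the Lie-algebra identities \eqref{lieequ} and \eqref{lieequ2}, which express the relevant polynomial in $H$ (modulo $\scrZ(\fg)$) as $E_+U_1-K_+U_2$. To evaluate the resulting term $I_{\vecv}(X_1,X_2,t)$ at a point $p$ that may be deep in a cusp, one needs a bound on $\int_{\partial B'}\scrY_{\Gamma}(gn_za_{-s})^{\alpha}\,|dz|$ that grows only like $s+\scrY_{\Gamma}(g)^{\alpha}$; this is Proposition \ref{scrYIntlem} and Corollary \ref{scrYcor}, a nontrivial geometric estimate counting how the translated boundary curve meets the disjoint horoballs of Lemma \ref{scrYprops} d). This is precisely where the bi-Lipschitz hypothesis on $\partial B'$ is consumed, and it is the step that produces the factors $|t|+\scrY_{\Gamma}(p)$ and hence the final powers of $T$; your sketch does not identify this input, and without it the uniformity in $p$ cannot be obtained.
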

We also prove a strengthened version of the above, Theorem \ref{mainthmprime} (cf. p. \pageref{mainthmprime}), where a bound on $C(\Gamma,B')$'s dependency on the set $B'$ is given. 

By using the rate of exponential mixing (cf. \cite{Shalom}, \cite[Proposition 5.3]{Kont}) and the so-called ``Margulis' mixing trick", one may get a similar result (cf. \cite[Proposition 2.4.8]{KleinMarg}). The rate one obtains in this manner, however, is worse than that which is achieved in Theorem \ref{mainthm}. This is discussed in greater detail in the introduction to \cite{Sod}. In \cite{Sod}, S\"odergren proves related results regarding the rate of effective equidistribution of pieces of closed horospheres in hyperbolic $n$-manifolds. Our Theorem \ref{mainthm} thus generalizes certain special cases of these results when $n=3$, proving that this type of equidistribution even holds for translates of pieces of non-closed horospheres, and for test functions on the frame bundle of the manifold. We give an explicit statement of one such generalisation in Corollary \ref{mainthmprimecor} (cf. pg. \pageref{mainthmprimecor}).

Since the bounds in Theorem \ref{mainthm} are uniform in $p$, we may study the equidistribution of horospherical orbits by considering the point $pa_T$, giving
\begin{cor}\label{mainthmcor}
Let $B$ satisfy the conditions of Theorem \ref{mainthm}, and define $B_T:=a_T B a_{-T}$. Then for all $T\geq 0$, all $p\in \GaG$, and all $f\in L^2(\GaG)$ such that $\|f\|_{W^7}<\infty$,
 \begin{align*}
\left| \!\frac{1}{\mu_N(B_T)}\!\!\int_{B_T}\!\! f\big(pn\big)\,d\mu_N(n)\!-\!\int_{\GaG}\!f\,d\mu\right|\!\leq C(\Gamma,B')\|f\|_{W^7}\Big\lbrace  \big(e^{-T}\scrY_{\Gamma}&(pa_T)\big)^{2-s_1}+e^{-T}T^4\\&\;\;\;+e^{-T}(1+T^3)\scrY_{\Gamma}(pa_T)\! \Big\rbrace.
\end{align*}
\end{cor}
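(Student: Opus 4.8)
The plan is to deduce the corollary directly from Theorem \ref{mainthm}, applied at the shifted base point $pa_T$, together with an elementary change of variables in the horospherical integral. The underlying algebraic fact is the conjugation identity $a_t n_z a_{-t}=n_{e^t z}$, which is immediate from the definitions of $A$ and $N$ (conjugation by $a_t$ multiplies the off-diagonal entry by $e^{t/2}/e^{-t/2}=e^t$). Consequently $B_T=a_T B a_{-T}=\lbrace n_w : w\in e^T B'\rbrace$, which is again of the form treated in Theorem \ref{mainthm}, now with underlying planar set $e^T B'\subset\CC$.

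First I would apply Theorem \ref{mainthm} with $p$ replaced by $pa_T\in\GaG$ (leaving $f$ and $B$ unchanged): the quantity
\[
\left| \frac{1}{\mu_N(B)} \int_B f\big(pa_T\, n\, a_{-T}\big) \, d\mu_N(n) - \int_{\GaG} f \, d\mu \right|
\]
is then bounded by $C(\Gamma,B')\|f\|_{W^7}$ times $\big(e^{-T}\scrY_{\Gamma}(pa_T)\big)^{2-s_1}+e^{-T}T^4+e^{-T}(1+T^3)\scrY_{\Gamma}(pa_T)$, i.e.\ by exactly the right-hand side asserted in the corollary. It remains to identify the average above with the one over $B_T$. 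Writing $n=n_z$ with $z\in B'$ and using $a_T n_z a_{-T}=n_{e^T z}$, the substitution $w=e^T z$ has Lebesgue-measure Jacobian $e^{2T}$ on $\CC\cong\RR^2$, so
\[
\int_B f\big(pa_T\, n\, a_{-T}\big) \, d\mu_N(n) = \int_{B'} f\big(p\, n_{e^T z}\big) \, dz = e^{-2T} \int_{e^T B'} f\big(p\, n_w\big) \, dw = e^{-2T} \int_{B_T} f(pn) \, d\mu_N(n).
\]
The same dilation gives $\mu_N(B_T)=e^{2T}\mu_N(B)$, so the factors $e^{\pm 2T}$ cancel and $\frac{1}{\mu_N(B)}\int_B f(pa_T\, n\, a_{-T})\,d\mu_N(n)=\frac{1}{\mu_N(B_T)}\int_{B_T} f(pn)\,d\mu_N(n)$; combining this with the displayed bound proves the corollary.

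There is no genuine obstacle here. The one point to keep track of is that the Jacobian of the complex dilation $z\mapsto e^T z$ is $e^{2T}$, not $e^T$ — but this is precisely the factor by which $\mu_N(B)$ grows under the same dilation, so the mass-normalization of the horospherical average is preserved. I would also remark that the constant $C(\Gamma,B')$ appearing in the bound is the one attached to the \emph{fixed} set $B'$ in Theorem \ref{mainthm} and is in particular independent of $T$ (and of the expanding family $B_T$); this is what makes the corollary a genuine equidistribution statement as $T\to\infty$.
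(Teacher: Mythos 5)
Your proposal is correct and is exactly the argument the paper intends: the paper states the corollary with only the one-line remark that the bounds of Theorem \ref{mainthm} are uniform in $p$, so one applies the theorem at the point $pa_T$, and your verification of the conjugation identity $a_Tn_za_{-T}=n_{e^Tz}$ together with the cancelling Jacobian factor $e^{2T}$ supplies the (routine) details the paper omits. Nothing further is needed.
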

The corresponding result for $\SL(2,\RR)$ is proved and stated as Proposition 3.1 in \cite{Strom}, the proof of which is the inspiration for the proof of Theorem \ref{mainthm}. The equidistribution properties of orbits of horospherical subgroups are well-known; in fact, the celebrated results of Ratner give a precise understanding of the asymptotic behaviour of orbits of arbitrary \emph{unipotent} subgroups. 

It is well-known that for a given point $p$, the horospherical orbit $pN$ is either \emph{closed} or \emph{dense}. Furthermore,  $pN$ is dense precisely when $pa_T$ is recurrent; there are therefore no closed horospheres in $\GaG$ when $\Gamma$ is cocompact. Noting that for cocompact $\Gamma$, $\scrY_{\Gamma}(p)\ll 1$,  Corollary \ref{mainthmcor} then provides an explicit rate of equidistribution for averaging sequences $B_T$ for \emph{every} horosphere in $\GaG$. When $\Gamma$ is \emph{non-cocompact}, however, there are many $p$ for which $pN$ is a closed horosphere. Moreover, there exist points $p$ for which $pa_T$ is recurrent, but $\lim_{T\rightarrow \infty} e^{-T}T^3\scrY_{\Gamma}(pa_T)\neq 0$ (thus Corollary \ref{mainthmcor} does not by itself give effective equidistribution of every non-closed horosphere). Letting $E$ be the exceptional set of such $p$, and $Q$ be the parabolic subgroup of upper triangular matrices in $G$, we observe that $EQ=E$; whether a point $p$ is in $E$ or not is therefore completely determined by $p$'s image in $\GaG/Q$. We also note that, in a certain sense, $E$ is small; by \cite[Theorem 1]{Melian}, the image of $E$ in $G/Q$ has Hausdorff dimension zero. Note that the quotient $G/Q$ may be identified with the ``boundary sphere" of hyperbolic 3-space.

In \cite{Strom}, effective equidistribution of \emph{every} non-closed horocycle is achieved (\cite[Theorem 1]{Strom}) by carefully splitting the horocycle into a number of pieces and using \cite[Proposition 3.1]{Strom} on all but one exceptional piece. Str\"ombergsson imposes an additional weighted supremum norm on the functions that are considered; it is this norm which is used to control the contribution from the exceptional piece of the horocycle. Moreover, it is shown in \cite[Proposition 4.1]{Strom} that one may not replace this supremum norm by a Sobolev norm \emph{of any order}. We believe that by a similar type of argument, one should be able to obtain an effective equidistribution result for \emph{all} non-closed horospheres in $\GaG$. We do not attempt this here, however. 

\subsection{Acknowledgements}
The research leading to these results was funded by Swedish Research Council Grant 621-2011-3629 and by a grant from the G\"oran Gustafsson Foundation for Research in Natural Sciences and Medicine. I would like to thank my advisor Andreas Str\"ombergsson for his time and effort in assisting me with this project; it is greatly appreciated. 
\section{Preliminaries}\label{prelims}
We recall some facts regarding the structure of $G$, its Lie algebra, invariant measures and representations. The main references for this section are \cite[Chapters 2, 5, 16]{Knapp}, and \cite[Chapter 7.4]{Folland}.
\subsection{Invariant Measures and Iwasawa Decomposition}\label{measures} The Iwasawa decomposition of $G$ is given by $G=NAK$, where $N$ and $A$ are as previously defined, and $K$ is $\mathrm{SU}(2)$, a maximal compact subgroup of $G$. Each $g\in G$ has a unique decomposition $g=n_za_tk$, with $n_z\in N$, $a_t\in A$ and $k\in K$ respectively. This decomposition gives rise to a corresponding decomposition of the Haar measure on $G$; if $g=n_za_tk$, then $d\mu_G(g)$ is a constant multiple of $e^{-2t}\,dm(z)\,dt\,dk$, where $dm$ is the Lebesgue measure on $\CC$ (i.e. $dm(x+iy)=dx\,dy$), and $dk$ is the Haar measure of $K$, normalized to be a probability measure. We choose the normalization of the Haar measure $\mu_N$ on $N$ such that if $B'\subset\CC$, and $ B=\lbrace n_z\,:\,z\in B'\rbrace$, then $\mu_N(B)=m(B')$. It will also be of use to define volumes of quotients other than $\GaG$; let $H$ be some group with Haar measure $\mu_H$, and let $\Xi$ be a discrete subgroup of $H$. Then we define $\mu_H(H/\Xi)$ (resp. $\mu_H(\Xi\backslash H)$ ) to be $\mu_H(\scrF_{\Xi})$, where $\scrF_{\Xi}$ is a fundamental domain for $H/\Xi$ (resp. $\Xi\backslash H$) in $H$.
\subsection{Lie Algebra}\label{liealg}
We denote by $\mathfrak{g}_0$ the Lie algebra of $G$. It is a 6-dimensional real semisimple Lie algebra with a basis given by
\begin{align*}
H=\frac{1}{2}\matr{1}{0}{0}{-1},  \; E_{+}=\matr{0}{1}{0}{0},\; E_{-}=\matr{0}{0}{1}{0}, \\\notag  J=\frac{1}{2}\matr{i}{0}{0}{-i},\; K_{+}=\matr{0}{i}{0}{0}, \; K_{-}=\matr{0}{0}{i}{0}.
\end{align*}
Note that
\begin{equation*}
\exp(tH)=a_t,
\end{equation*}
and
\begin{equation*}
\exp(xE_++yK_+)=n_{x+iy}.
\end{equation*}
The complexification of $\mathfrak{g}_0$ is denoted by $\mathfrak{g}$, which has the universal enveloping algebra $\scrU(\mathfrak{g})$. We use $\scrU^m(\fg)$ to denote terms in the canonical filtration of $\scrU(\mathfrak{g})$. The center of $\scrU(\mathfrak{g})$, $\scrZ(\mathfrak{g})$, contains the following two elements:
\begin{align*}
&\Omega_1 =H^2-J^2-2H+E_{+}E_{-}-K_{+}K_{-},\\\notag &\Omega_2 = 2HJ-2J+E_{+}K_{-}+K_{+}E_{-}.
\end{align*}
The following identity will play an important role for us:
\begin{equation}\label{lieequ}
H^4-4H^3+(5-\Omega_1)H^2+2(\Omega_1-1)H-(\Omega_1+\sfrac{1}{4}\Omega_2^2)=E_+U_1-K_+U_2,
\end{equation}
where $U_1$ and $U_2$ are the following elements in $\scrU^3(\mathfrak{g})$:
\begin{align}\label{lieequU}
U_1&=\sfrac 1 2 HE_-+\sfrac 1 2 JK_--\sfrac 1 4 E_+K_-^2-\sfrac 1 2 K_+E_-K_--H^2E_--HJK_-,
\\\notag U_2&=\sfrac 1 2 HK_--\sfrac 1 2 JE_-+\sfrac 1 4 K_+E_-^2-H^2K_--HJE_-.
\end{align}
\subsection{Representation Theory}\label{reptheorysec} We now recall some basic facts from the theory of unitary representations. Let $(\pi,\mathcal{H})$ be a unitary representation of $G$; i.e. $\mathcal{H}$ is a separable Hilbert space, and $\pi$ is a group homomorphism from $G$ to the group of unitary operators on $\mathcal{H}$ such that the map from $G\times\mathcal{H}$ to $\mathcal{H}$ given by
\begin{equation*}
(g,\vecv)\mapsto \pi(g)\vecv
\end{equation*}
is continuous. The representation $(\pi,\mathcal{H})$ is said to be \emph{irreducible} if $\mathcal{H}$ has no non-trivial proper closed subspace $V$ such that $\pi(G)V\subset V$. Each unitary representation  $(\pi,\mathcal{H})$ of $G$ has a direct integral decomposition
\begin{equation}\label{integraldecomp1}
(\pi,\mathcal{H})\cong \left(\int_{\mathsf{Z}}^{\oplus} \mathcal{\pi}_{\zeta}\,d\upsilon(\zeta), \int_{\mathsf{Z}}^{\oplus} \mathcal{H}_{\zeta}\,d\upsilon(\zeta) \right),
\end{equation}
where $\mathsf{Z}$ is a locally compact Hausdorff space, $\upsilon$ is a positive Radon measure on $\mathsf{Z}$, and for $\upsilon$-a.e.\ $\zeta$, $(\pi_{\zeta}, \mathcal{H}_{\zeta})$ is an irreducible unitary representation of $G$ (cf.\ eg.\ \cite[Theorem 7.36]{Folland}). The irreducible unitary representations of $G$ are relatively easy to describe: if $(\pi,\mathcal{H})$ is an irreducible unitary representation of $G$, then $(\pi,\mathcal{H})$ is isomorphic to either the trivial representation $(\pi_{\mathit{triv}},\CC)$, a \emph{principal series representation} $\mathcal{P}^{(n,\nu)}$, where $(n,\nu)\in \lbrace 0\rbrace\times i\RR_{\geq 0}\cup\NN_{>0}\times i\RR$, or a \emph{complementary series representation} $\scrP^{(0,\nu)}$, where $\nu\in (0,2)$ (see \cite[Theorem 16.2]{Knapp}). The \emph{spherical} unitary dual (the representations with a $K$-invariant vector) consists of the trivial representation and the representations $\scrP^{(0,\nu)}$, where $\nu\in i\RR_{\geq 0}\cup (0,2)$, and the \emph{tempered} unitary dual consists of the principal series representations.

We let $\scrH^{\infty}$ denote the space of \emph{smooth vectors} for $(\pi,\scrH)$; these are the vectors $\vecv$ for which the map from $G$ to $\scrH$ given by  $g\mapsto \pi(g)\vecv$ is a $C^{\infty}$ function on $G$. It is well-known that $\scrH^{\infty}$ is dense in $\scrH$. For $X\in \mathfrak{g}_0$, we define the operator $d\pi(X)$ on $\scrH^{\infty}$ as 
\begin{equation}\label{diff1}
d\pi(X)\vecv:=\left.\frac{d}{dt}\right|_{t=0}\!\!\!\! \pi\big(\exp(tX)\big)\vecv,\qquad \vecv\in\scrH^{\infty}.
\end{equation} 
We can extend this as a Lie algebra representation of $\mathfrak{g}$ in the obvious way (i.e. for $X\in\mathfrak{g}_0$, let $d\pi(iX)\vecv:=id\pi(X)\vecv$) and then to $\scrU^m(\mathfrak{g})$ by composition. We can now define Sobolev norms for the representation $(\pi,\mathcal{H})$ in the following manner: fix a basis $X_1,\dots X_6$ for $\mathfrak{g}_0$. For $\vecv\in \scrH^{\infty}$, define
\begin{equation}\label{sobnorm}
\|\vecv\|_{W^m(\mathcal{H})}^2:=\sum_{U} \|d\pi(U)\vecv\|^2_{\mathcal{H}},
\end{equation}
 where the sum runs over all $U$ that are monomials in $X_1,\dots X_6$ of degree less than or equal to $m$, including the term ``1"of order zero (i.e. $\|\vecv\|_{\scrH}^2$ is one of the summands in the right-hand side of \eqref{sobnorm}). It is easy to check that for any $m\geq 0$, there is a continuous function $C: G\rightarrow \RR_{>0}$ (independent of $(\pi,\scrH)$) such that
 \begin{equation}\label{transgbd}
 \|\pi(g)\vecv\|_{W^m(\scrH)}\leq C(g)\|\vecv\|_{W^m(\scrH)}\qquad\forall g\in G,\,\vecv\in \scrH^{\infty}.
 \end{equation}
  
We note that given the direct integral decomposition of $(\pi,\mathcal{H})$ into irreducibles \eqref{integraldecomp1}, for $\vecv\in\mathcal{H}$ we have
 \begin{equation*}
 \|\vecv\|_{\mathcal{H}}^2=\int_{\mathsf{Z}} \|\vecv_{\zeta}\|_{\mathcal{H}_{\zeta}}^2\,d\upsilon (\zeta),
 \end{equation*}
and
\begin{equation*}
\pi(g)\vecv=\int_{\mathsf{Z}} \pi_{\zeta}(g)\vecv_{\zeta}\,d\upsilon (\zeta).
\end{equation*}
Also, for $\vecv\in\scrH^{\infty}$, 
\begin{equation*}
 \|\vecv\|_{W^m(\mathcal{H})}^2=\int_{\mathsf{Z}} \|\vecv_{\zeta}\|_{W^m(\mathcal{H}_{\zeta})}^2\,d\upsilon (\zeta).
\end{equation*}
The direct integral decomposition of $(\pi,\mathcal{H})$ allows the construction of intertwining operators in the following manner: let $f$ be a bounded, continuous function from $\mathsf{Z}$ into $\CC$. We can then form the following operator: for $\vecv\in\mathcal{H}$, define
\begin{equation}\label{inter1}
T_f\vecv:=\int_{\mathsf{Z}}f(\zeta)\vecv_{\zeta}\,d\upsilon (\zeta).
\end{equation}
Then for all $g\in G$, $\vecv\in\mathcal{H}$;
\begin{equation*}
T_f\pi(g)\vecv=\int_{\mathsf{Z}}\pi_{\zeta}(g)f(\zeta)\vecv_{\zeta}\,d\upsilon (\zeta)=\pi(g)T_f\vecv.
\end{equation*}
We will also need intertwining operators of this kind where the scalar function is not necessarily uniformly bounded. By dropping the requirement that the function $f$ in \eqref{inter1} is uniformly bounded, we get operators that need not be defined on all of $\scrH$, but may be bounded operators on $\scrH^{\infty}$ with respect to various Sobolev norms $\|\cdot\|_{W^m(\scrH)}$.
 
Finally, we recall that if $(\pi,\mathcal{H})$ is irreducible, then by Schur's lemma, elements of $\mathcal{Z}(\mathfrak{g})$ act as scalars on $\mathcal{H}^{\infty}$.  If $(\pi,\mathcal{H})$ is isomorphic to $ \mathcal{P}^{(n,\nu)}$, then the scalars for $d\pi(\Omega_1)$ and $d\pi(\Omega_2)$ are
\begin{equation}\label{eigen1}
d\pi(\Omega_1)=\frac{n^2+\nu^2}{4}-1,
\end{equation}
and
\begin{equation}\label{eigen2}
d\pi(\Omega_2)=\frac{in\nu}{2}.
\end{equation}

\section{Integral Formulas}\label{intformssec}
We now prove the integral formulas for irreducible unitary representations of $G$ that will be used in Section \ref{effecsec}. In this entire section we let $(\pi,\scrH)$ be an irreducible unitary representation of $G$. We also fix a compact subset $B'$ of $\CC$, such that $m(B')>0$ and the boundary $\partial B'$ is a piecewise smooth simple closed curve. As before, we set
\begin{equation*}
B:=\lbrace n_z\;:\; z\in B' \rbrace\subset N.
\end{equation*}
For each vector $\vecv\in\scrH$ we define a function $\psi_{\vecv}:G\rightarrow \scrH$ by
\begin{equation}\label{Fdef}
\psi_{\vecv}(g):=\frac{1}{\mu_N(B)}\int_B \pi(ng)\vecv\,d\mu_N(n).
\end{equation}
We note that we have
\begin{equation*}
\psi_{\vecv}(g)=\frac{1}{m(B')}\int_{B'} \pi(n_zg)\vecv\,dm(z)=\frac{1}{m(B')}\iint\limits_{B'} \pi(n_{x+iy}g)\vecv\,dx\,dy
\end{equation*}
(for the second equality we identify $\CC$ with $\RR^2$). Note that $\psi_{\vecv}(g)$ depends linearly on $\vecv$. We apply this to \eqref{lieequ}, giving that for $\vecv\in \scrH^{\infty}$,
\begin{align}\label{diff2}
\psi_{d\pi(H^4)\vecv}(g)-4\psi_{d\pi(H^3)\vecv}(g)+(5-\lambda_1)\psi_{d\pi(H^2)\vecv}(g)+2(\lambda_1-&1)\psi_{d\pi(H)\vecv}(g)-(\lambda_1+\sfrac{1}{4}\lambda_2^2)\psi_{\vecv}(g)\\\notag&=\psi_{d\pi(E_+U_1)\vecv}(g)-\psi_{d\pi(K_+U_2)\vecv}(g),
\end{align}
where $\lambda_1=d\pi(\Omega_1)$ and $\lambda_2=d\pi(\Omega_2)$ are the scalars given at the end of Section \ref{reptheorysec} corresponding to $(\pi,\scrH)$. We now restrict ourselves to studying the behaviour of $\psi_{\vecv}$ on $A$; we define the following function from $\RR$ to $\scrH$:
\begin{equation*}
f_{\vecv}(t):=\psi_{\vecv}(a_t).
\end{equation*}
We compute the various terms of \eqref{diff2} for $f_{\vecv}$:
\begin{align*}
f_{d\pi(H)\vecv}(t)=&\frac{1}{\mu_N(B)}\int_B \pi(na_t)d\pi(H)\vecv\,d\mu_N(n)=\frac{1}{\mu_N(B)}\int_B \pi(na_t)\left.\frac{d}{dr}\right|_{r=0} \!\!\!\!\pi(a_r)\vecv\,d\mu_N(n)\\\notag &\!\!= \left.\frac{d}{dr}\right|_{r=0}\frac{1}{\mu_N(B)}\int_B \pi(na_{t+r})\vecv\,d\mu_N(n)= \frac{d}{dt}\frac{1}{\mu_N(B)}\int_B \pi(na_{t})\vecv\,d\mu_N(n)=f_{\vecv}'(t).
\end{align*}
Hence
\begin{equation}\label{hdiff}
f_{d\pi(H^m)\vecv}(t)=f_{\vecv}^{(m)}(t).
\end{equation}
We also have
\begin{align}\label{xdiff}
f_{d\pi(E_+)\vecv}(t)=&\frac{1}{m(B')}\iint\limits_{B'} \!\pi(n_{x+iy}a_t)d\pi(E_+)\vecv\,dx\,dy=\frac{1}{m(B')}\iint\limits_{B'} \pi(n_{x+iy}a_t)\left.\frac{d}{dr}\right|_{r=0} \!\!\!\!\pi(n_r)\vecv\,dx\,dy\\\notag&=\frac{1}{m(B')}\iint\limits_{B'}\left.\frac{d}{dr}\right|_{r=0}\!\!\!\! \pi(n_{x+re^t+iy}a_t) \vecv\,dx\,dy=\frac{e^t}{m(B')}\iint\limits_{B'} \frac{\partial}{\partial x}\pi(n_{x+iy}a_t) \vecv\,dx\,dy.
\end{align}
Likewise,
\begin{equation}\label{ydiff}
f_{d\pi(K_+)\vecv}(t)=\frac{e^t}{m(B')}\iint\limits_{B'} \frac{\partial}{\partial y}\pi(n_{x+iy}a_t) \vecv\,dx\,dy.
\end{equation}
Combining \eqref{diff2}, \eqref{hdiff}, \eqref{xdiff} and \eqref{ydiff} gives
\begin{align*}
f_{\vecv}^{(4)}(t)-4f_{\vecv}^{(3)}(t)+&(5-\lambda_1)f_{\vecv}^{(2)}(t)+2(\lambda_1-1)f_{\vecv}^{(1)}(t)-(\lambda_1+\sfrac{1}{4}\lambda_2^2)f_{\vecv}(t)\\&=\frac{e^t}{m(B')}\iint\limits_{B'}\left( \frac{\partial}{\partial x}\pi(n_{x+iy}a_t)d\pi(U_1)\vecv-\frac{\partial}{\partial y}\pi(n_{x+iy}a_t)d\pi(U_2)\vecv\right)dx\,dy.
\end{align*}
By Green's Theorem, we then have
\begin{align}\label{ODE1}
f_{\vecv}^{(4)}(t)-4f_{\vecv}^{(3)}(t)+(5&-\lambda_1)f_{\vecv}^{(2)}(t)+2(\lambda_1-1)f_{\vecv}^{(1)}(t)-(\lambda_1+\sfrac{1}{4}\lambda_2^2)f_{\vecv}(t)\\\notag&=\frac{e^t}{m(B')}\oint\limits_{\partial B'} \pi(n_{x+iy}a_t)d\pi(U_2)\vecv\,dx+\pi(n_{x+iy}a_t)d\pi(U_1)\vecv\,dy.
\end{align}
In Proposition \ref{Intformlem}, we present an integral representation of the solution to this (Hilbert space-valued) ODE that will prove to be useful in obtaining asymptotics for $f_{\vecv}(t)$ as $t$ tends towards $-\infty$. We first note, however, that if $(\pi,\mathcal{H})$ is one of the irreducible representations listed in Section \ref{reptheorysec} such that $\lambda_2=0$, we do not need to solve a fourth order differential equation. Indeed, in this case we may use the following identity
\begin{equation}\label{lieequ2}
H^3-3H^2+(2-\Omega_1)H+\Omega_1=\sfrac{1}{2}\Omega_2J+E_+V_1+K_+V_2,
\end{equation}
where $V_1$, $V_2$ are the following elements of $\scrU^2(\fg)$:
\begin{equation}\label{lieequ2V}
V_1=E_--E_-H-\sfrac{1}{2}K_-J,\qquad V_2=-K_-+K_-H-\sfrac{1}{2}E_-J. 
\end{equation}
In the same way that \eqref{lieequ} implies \eqref{ODE1}, \eqref{lieequ2} implies, when $d\pi(\Omega_2)=\lambda_2=0$:
\begin{align}\label{ODE2}
f_{\vecv}^{(3)}(t)-3f_{\vecv}^{(2)}(t)+(2-\lambda_1)&f_{\vecv}^{(1)}(t)+\lambda_1f_{\vecv}(t)\\\notag&=\frac{e^{t}}{m(B')}\oint_{\partial B'}\pi(n_{x+iy}a_t)d\pi(V_2)\vecv\,dx+\pi(n_{x+iy}a_t)d\pi(V_1)\vecv\,dy.
\end{align}

For notational purposes we introduce the following function: for $X,Y\in\scrU(\mathfrak{g})$, $t\in\RR$ and $\vecv\in\scrH^{\infty}$, define
\begin{equation}\label{Idef}
I_{\vecv}(X,Y,t):= \frac{e^t}{m(B')}\oint_{\partial B'}\pi(n_{x+iy}a_t)d\pi(Y)\vecv\,dx+\pi(n_{x+iy}a_t)d\pi(X)\vecv\,dy.
\end{equation}
By using the values of $\lambda_1$ and $\lambda_2$ given in \eqref{eigen1} and \eqref{eigen2}, we may rewrite \eqref{ODE1} as
\begin{equation}\label{ODE11}
\big(\sfrac{d}{dt}-(1-\sfrac{n}{2})\big)\big(\sfrac{d}{dt}-(1-\sfrac{\nu}{2})\big)(\sfrac{d}{dt}-(1+\sfrac{\nu}{2})\big)\big(\sfrac{d}{dt}-(1+\sfrac{n}{2})\big)f_{\vecv}(t)=I_{\vecv}(U_1,U_2,t).
\end{equation}

\begin{lem}\label{Iintlem}
Assume $(\pi,\scrH)\cong\scrP^{(n,\nu)}$, where $n> 0$, and $\vecv\in \scrH^{\infty}$. Let $g_{\vecv}(t)$ be defined by 
\begin{equation}\label{gdef}
 e^{(1-n/2)t}g_{\vecv}(t)=(\sfrac{d}{dt}-(1-\sfrac{\nu}{2})\big)(\sfrac{d}{dt}-(1+\sfrac{\nu}{2})\big)\big(\sfrac{d}{dt}-(1+\sfrac{n}{2})\big)f_{\vecv}(t).
\end{equation}
Then
\begin{equation}\label{grel}
g_{\vecv}(t)=\int_{-\infty}^t e^{(n/2-1)s}I_{\vecv}(U_1,U_2,s)\,ds.
\end{equation}
\end{lem}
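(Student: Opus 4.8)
The claim is essentially an integration of the fourth-order ODE \eqref{ODE11} one step at a time. The plan is to rewrite the differential operator on the left-hand side of \eqref{ODE11} in factored form and peel off the outermost factor. By definition \eqref{gdef}, the quantity $e^{(1-n/2)t}g_{\vecv}(t)$ equals the result of applying only the last three factors to $f_{\vecv}(t)$, so applying the remaining factor $\big(\sfrac{d}{dt}-(1-\sfrac{n}{2})\big)$ to $e^{(1-n/2)t}g_{\vecv}(t)$ reproduces the full left-hand side of \eqref{ODE11}, which is $I_{\vecv}(U_1,U_2,t)$. The elementary identity $\big(\sfrac{d}{dt}-(1-\sfrac n2)\big)\big(e^{(1-n/2)t}g_{\vecv}(t)\big)=e^{(1-n/2)t}g_{\vecv}'(t)$ then gives the first-order equation $g_{\vecv}'(t)=e^{(n/2-1)t}I_{\vecv}(U_1,U_2,t)$.

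Integrating this from $-\infty$ to $t$ yields \eqref{grel}, provided one can justify that $g_{\vecv}(t)\to 0$ as $t\to-\infty$ and that the integral converges. Here is where I expect the only real work to lie. First I would check absolute convergence of the integral near $-\infty$: from the definition \eqref{Idef}, $I_{\vecv}(U_1,U_2,s)$ carries a prefactor $e^s$, and the integrand $\pi(n_{x+iy}a_s)d\pi(U_j)\vecv$ is bounded in $\scrH$ uniformly in $s$ (since $\pi$ is unitary and $(x,y)$ ranges over the compact curve $\partial B'$), so $\|e^{(n/2-1)s}I_{\vecv}(U_1,U_2,s)\|_{\scrH}=O(e^{(n/2)s})$, which is integrable at $-\infty$ because $n>0$. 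This also shows the right-hand side of \eqref{grel} is a well-defined $\scrH$-valued function, continuous in $t$, with derivative $e^{(n/2-1)t}I_{\vecv}(U_1,U_2,t)$ — matching $g_{\vecv}'(t)$.

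It then remains to pin down the constant of integration, i.e. to show $\lim_{t\to-\infty}g_{\vecv}(t)=0$. The plan is to bound $g_{\vecv}(t)$ directly from its defining relation \eqref{gdef}: $g_{\vecv}(t)=e^{(n/2-1)t}$ times a linear combination of $f_{\vecv}(t)$ and its first three derivatives. By \eqref{hdiff} each $f_{\vecv}^{(m)}(t)=f_{d\pi(H^m)\vecv}(t)=\psi_{d\pi(H^m)\vecv}(a_t)$, and by \eqref{Fdef} together with unitarity of $\pi$, $\|\psi_{\vecw}(a_t)\|_{\scrH}\leq\|\vecw\|_{\scrH}$ for every $\vecw$; hence $\|f_{\vecv}^{(m)}(t)\|_{\scrH}\leq\|d\pi(H^m)\vecv\|_{\scrH}$ is bounded uniformly in $t$ (this is finite since $\vecv\in\scrH^{\infty}$). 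Therefore $\|g_{\vecv}(t)\|_{\scrH}=O(e^{(n/2-1)t})$, and since $n>0$ gives $n/2-1>-1$... more carefully, the exponent $n/2-1$ may be negative, so this bound alone does not force decay. Instead I would argue as follows: let $h(t)$ denote the right-hand side of \eqref{grel}; we have shown $g_{\vecv}'(t)=h'(t)$, so $g_{\vecv}(t)-h(t)$ is a constant vector $\vecc\in\scrH$. Multiplying \eqref{gdef} through and using the uniform bound on $f_{\vecv}^{(m)}$, one sees $e^{-(n/2-1)t}g_{\vecv}(t)$ stays bounded as $t\to-\infty$; but $e^{-(n/2-1)t}h(t)\to 0$ there (split the integral, or note $\|h(t)\|=O(e^{(n/2)t})$ so $e^{-(n/2-1)t}\|h(t)\|=O(e^{t/2}\cdot e^{(something)})$ — in fact $e^{-(n/2-1)t}\|h(t)\|_{\scrH}\le e^{-(n/2-1)t}\int_{-\infty}^t e^{(n/2)s}\,ds\cdot\mathrm{const}=O(e^t)\to 0$). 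Hence $e^{-(n/2-1)t}\vecc$ is bounded near $-\infty$; since $n/2-1$ could be $\le 0$ this forces $\vecc=\vecnull$ when $n>0$ (as $e^{-(n/2-1)t}\to\infty$), giving $g_{\vecv}=h$, which is \eqref{grel}. The main obstacle is thus bookkeeping the exponential rates correctly to kill the integration constant; everything else is the routine factored-ODE computation plus the unitarity estimate $\|\psi_{\vecw}(a_t)\|_{\scrH}\le\|\vecw\|_{\scrH}$.
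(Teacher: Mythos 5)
Your reduction to the first-order equation $g_{\vecv}'(t)=e^{(n/2-1)t}I_{\vecv}(U_1,U_2,t)$ and your verification that the integral in \eqref{grel} converges at $-\infty$ (because $\|I_{\vecv}(U_1,U_2,s)\|\ll e^{s}$ and $n>0$) coincide with the paper's argument. The gap is in how you kill the integration constant $\vecc=g_{\vecv}-h$. You conclude from the boundedness of $e^{-(n/2-1)t}\vecc$ that $\vecc=\vecnull$ because ``$e^{-(n/2-1)t}\to\infty$'' as $t\to-\infty$. But $e^{-(n/2-1)t}=e^{(1-n/2)t}$, and as $t\to-\infty$ this tends to $\infty$ only when $n>2$; for $n=1$ it tends to $0$ and for $n=2$ it is identically $1$, so in those cases the boundedness of $e^{-(n/2-1)t}\vecc$ carries no information. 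Equivalently: writing $\vecw=(d\pi(H)-(1-\sfrac{\nu}{2}))(d\pi(H)-(1+\sfrac{\nu}{2}))(d\pi(H)-(1+\sfrac{n}{2}))\vecv$, one has $g_{\vecv}(t)=e^{(n/2-1)t}f_{\vecw}(t)$, and the unitarity bound $\|f_{\vecw}(t)\|\leq\|\vecw\|$ forces $g_{\vecv}(t)\to 0$ only when $n\geq 3$.

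For $n=1,2$ an additional input is genuinely required, and this is where the paper does its real work: it pairs $e^{(n/2-1)r}f_{\vecw}(r)$ against an arbitrary $\vecu\in\scrH^{\infty}$ and invokes quantitative decay of matrix coefficients, $|\langle\pi(a_r)\vecw,\pi(l^{-1})\vecu\rangle|\leq C_{\eta}e^{\eta r}\|\vecw\|_{W^2(\scrH)}\|\pi(l^{-1})\vecu\|_{W^2(\scrH)}$ with some $\eta>1/2$ (from \cite[Proposition 5.3]{Kont}), so that $\langle e^{(n/2-1)r}f_{\vecw}(r),\vecu\rangle\ll e^{(\eta+n/2-1)r}\to 0$; density of $\scrH^{\infty}$ then gives $\vecc=\vecnull$. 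Note that for $n=1$ even the qualitative vanishing of matrix coefficients (Howe--Moore) would not suffice, since the prefactor $e^{-r/2}$ blows up; one needs a rate beating $e^{r/2}$. Your proposal is missing this ingredient, so as written it only proves the lemma for $n\geq 3$.
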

\begin{proof}

By \eqref{ODE11} and \eqref{gdef},
\begin{equation*}
\big(\sfrac{d}{dt}-(1-\sfrac{n}{2})\big)\big(e^{(1-n/2)t}g_{\vecv}(t)\big)=I_{\vecv}(U_1,U_2,t),
\end{equation*}
so
\begin{equation*}
\sfrac{d}{dt}g_{\vecv}(t)=e^{(n/2-1)t}I_{\vecv}(U_1,U_2,t).
\end{equation*}
The fundamental theorem of calculus then gives
\begin{equation*}
g_{\vecv}(t)-g_{\vecv}(r)=\int_r^te^{(n/2-1)s}I_{\vecv}(U_1,U_2,s)\,ds.
\end{equation*}
By using the triangle inequality for integrals in \eqref{Idef}, we get that
\begin{equation}\label{Ibd}
\|I_{\vecv}(U_1,U_2,s)\|\ll_{B} e^s\|\vecv\|_{W^3(\scrH)},
\end{equation}  
so
\begin{equation*}
\|g_{\vecv}(t)-g_{\vecv}(r)\|\ll_{B,\vecv}\int_r^t e^{\frac{sn}{2}}\,ds=\frac{2}{n}\left(e^{\frac{tn}{2}}-e^{\frac{rn}{2}}\right).
\end{equation*}
From this uniform bound, we see that $g_{\vecv}(r)$ converges to some vector $\vecv_{\infty}$ as $r\rightarrow-\infty$, and
\begin{equation*}
g_{\vecv}(t)=\vecv_{\infty}+\int_{-\infty}^{t}e^{(n/2-1)s}I_{\vecv}(U_1,U_2,s)\,ds.
\end{equation*}
It remains to prove that $\vecv_{\infty}=0$. We let $\vecw=(d\pi(H)-(1-\sfrac{\nu}{2}))(d\pi(H)-(1+\sfrac{\nu}{2}))(d\pi(H)-(1+\sfrac{n}{2}))\vecv$, and note that from the definition of $g_{\vecv}(r)$, 
\begin{equation*}
 g_{\vecv}(r)=e^{(n/2-1)r}f_{\vecw}(r).
\end{equation*}
 For $n\geq 3$, have
 \begin{equation*}
 \|g_{\vecv}(r)\|=\|e^{(n/2-1)r}f_{\vecw}(r)\|\leq e^{(n/2-1)r}\|\vecw\|\leq e^{r/2}\|\vecw\|,
 \end{equation*}
 so $\vecv_{\infty}=0$. For $n=1$ or $n=2$, we use quantitative decay of matrix coefficients; let $\vecu$ be any vector in $\scrH^{\infty}$. We then have
\begin{align*}
\langle e^{(n/2-1)r}f_{\vecw}(r),\vecu\rangle=\frac{1}{\mu_N(B)}\int_{B}e^{(n/2-1)r}\langle \pi(l)\pi(a_{r})\vecw,\vecu\rangle\,d\mu_N(l)\\\notag =\frac{1}{\mu_N(B)}\int_{B}e^{(n/2-1)r}\langle \pi(a_{r})\vecw,\pi(l^{-1})\vecu\rangle\,d\mu_N(l).
\end{align*}
By \cite[Proposition 5.3]{Kont} (cf. \cite[Propositions 7.14, 7.15 (c)]{Knapp}), there exist $\eta > 1/2$ and $C_{\eta}>0$, not depending on $r$, such that 
\begin{equation*}
|\langle \pi(a_{r})\vecw,\pi(l^{-1})\vecu\rangle| \leq C_{\eta} e^{\eta r} \|\vecw\|_{W^2(\scrH)}\|\pi(l^{-1})\vecu\|_{W^2(\scrH)},
\end{equation*}
giving
\begin{equation*}
|\langle  e^{(n/2-1)r}f_{\vecw}(r),\vecu\rangle|\leq C_{\eta}e^{(\eta-1/2) r} \|\vecw\|_{W^2(\scrH)}\frac{1}{\mu_N(B)}\int_{B}\|\pi(l^{-1})\vecu\|_{W^2(\scrH)}\,d\mu_N(l).
\end{equation*}
Here the integral in the right-hand side is finite (cf.\ \eqref{transgbd}); hence $\langle  e^{(n/2-1)r}f_{\vecw}(r),\vecu\rangle\rightarrow 0$ as $r\rightarrow -\infty$, and thus $\langle \vecv_{\infty},\vecu\rangle=0$. Since $\scrH^{\infty}$ is dense in $\scrH$, $\vecv_{\infty}=0$.
\end{proof}
We are now able to prove the main result of this section:
\begin{prop}\label{Intformlem} Given an irreducible unitary representation $(\pi,\scrH)$ of $G$, there exist $\CC$-valued functions $F$, $F_0$, $F_1$, $F_2$, and elements $X_1$, $X_2$ of $\scrU^3(\fg)$, all of which depend only on the isomorphism class of $(\pi,\scrH)$, such that for any $T\geq 0$ and $\vecv\in\scrH^{\infty}$, 
\begin{equation}\label{prop4eq}
f_{\vecv}(-T)=\int_{-\infty}^{0} F(T,t)I_{\vecv}(X_1,X_2,t)\,dt+\sum_{m=0}^2 F_m(T)f_{d\pi(H^m)\vecv}(0),
\end{equation}
and the following bounds hold, with all implied constants absolute:
\begin{enumerate}[(i)]
\item If $(\pi,\scrH)\cong \scrP^{(n,\nu)}$, where $n>0$, then $|F_2(T)|\ll \frac{(1+T)e^{-T}}{n}$, for $i=0,1$, 
$|F_i(T)|\ll (1+|\nu|)(1+T)e^{-T}$, and 
\begin{equation*}
|F(T,t)|\ll\frac{1}{n^2}\begin{cases} e^{(\frac{n}{2}-1)(T+t)}&\mathrm{if}\,\, t\leq -T\\ (1+t+T)e^{-(T+t)}\quad &\mathrm{if}\,\,t\geq-T. \end{cases}
\end{equation*} 
\item If $(\pi,\scrH)\cong \scrP^{(0,\nu)}$, where $\nu\in i\RR_{\geq 0}$, then $|F_0(T)|\ll (1+|\nu|^2)(1+T^2)e^{-T}$, $|F_i(T)|\ll (1+T^2)e^{-T}$ for $i=1,2$, and
\begin{equation*}
|F(T,t)|\ll \begin{cases} 0 \quad& \mathrm{if}\,\, t\leq -T\\ (T+t)^2e^{-(T+t)}\quad&\mathrm{if}\,\,t\geq -T.\end{cases}
\end{equation*} 
\item If $(\pi,\scrH)\cong \scrP^{(0,\nu)}$, where $\nu\in (0,2)$, then $|F_i(T)|\ll \nu^{-2}e^{(\frac{\nu}{2}-1)T}$ for $i=0,1,2$, and
\begin{equation*}
|F(T,t)|\ll \frac{1}{\nu^2}\begin{cases} 0 \quad& \mathrm{if}\,\, t\leq -T\\ e^{(\frac{\nu}{2}-1)(T+t)}\quad&\mathrm{if}\,\,t\geq -T.\end{cases}
\end{equation*}
\end{enumerate}
Furthermore, in cases (ii) and (iii), $X_1$ and $X_2$ may be taken as elements of $\scrU^2(\fg)$.
\end{prop}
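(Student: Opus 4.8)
The plan is to solve the governing ODE by iterated first-order integrations, the direction of each being dictated by the real part of the corresponding root, and then to estimate the resulting $\scrH$-valued kernels. Consider first case (i), so $(\pi,\scrH)\cong\scrP^{(n,\nu)}$ with $n>0$. By \eqref{ODE11}, $f_{\vecv}$ satisfies $\prod_{j=0}^{3}(\sfrac{d}{dt}-c_j)f_{\vecv}=I_{\vecv}(U_1,U_2,\cdot)$ with $c_0=1-\sfrac n2$, $c_1=1-\sfrac\nu2$, $c_2=1+\sfrac\nu2$, $c_3=1+\sfrac n2$. Lemma \ref{Iintlem} has already integrated the factor $\sfrac{d}{dt}-c_0$ ``from $-\infty$'', giving $(\sfrac{d}{dt}-c_1)(\sfrac{d}{dt}-c_2)(\sfrac{d}{dt}-c_3)f_{\vecv}(t)=h(t)$ with $h(t):=e^{(1-n/2)t}\int_{-\infty}^{t}e^{(n/2-1)s}I_{\vecv}(U_1,U_2,s)\,ds$, together with the fact (nontrivial for $n\le 2$) that no homogeneous term $e^{c_0 t}$ survives. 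Since $\|h(t)\|\ll_{B}n^{-1}e^{t}\|\vecv\|_{W^3(\scrH)}$ by \eqref{Ibd} while $\Re c_1=\Re c_2=1$ and $\Re c_3>1$, none of the three remaining factors can be integrated from $-\infty$, so I would integrate each of them starting at $t=0$. This yields the explicit identity
\[
f_{\vecv}(t)=\sum_{m=0}^{2}\phi_m(t)\,f_{d\pi(H^m)\vecv}(0)+\int_{0}^{t}g_3(t-s)\,h(s)\,ds,
\]
where (using \eqref{hdiff}) $\phi_m$ is the homogeneous solution with $\phi_m^{(j)}(0)=\delta_{mj}$ and $g_3$ is the associated impulse response, both explicit combinations of the $e^{c_j t}$ (with monomial corrections $t^k e^{c_j t}$ when $\nu=0$ forces two roots to coincide). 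Putting $F_m(T):=\phi_m(-T)$ already produces the required homogeneous part. Substituting the formula for $h$, applying Fubini (legitimate since $\|I_{\vecv}(U_1,U_2,r)\|\ll_{B}e^{r}\|\vecv\|_{W^3(\scrH)}$), and setting $t=-T$ collapses the integral term into $\int_{-\infty}^{0}F(T,r)\,I_{\vecv}(U_1,U_2,r)\,dr$ with
\[
F(T,r)=e^{(n/2-1)r}\int_{\max(r,-T)}^{0}g_3(-T-s)\,e^{(1-n/2)s}\,ds,
\]
so one takes $X_1=U_1$, $X_2=U_2\in\scrU^3(\fg)$; the case distinction $r\le -T$ versus $r\ge -T$ in the statement is exactly the split at the lower endpoint of this $s$-integral. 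The asserted bounds then follow from elementary estimates on $g_3$, the $\phi_m$ and this $s$-integral, using $|c_1|,|c_2|\ll 1+|\nu|$, $\tfrac1{c_1-c_2}=\mp\tfrac1\nu$, $\tfrac1{c_i-c_3}=O(n^{-1})$, $\tfrac1{(c_3-c_1)(c_3-c_2)}=O(n^{-2})$; the apparent pole at $\nu=0$ is spurious and is removed by the bound $\tfrac{e^{c_1 t}-e^{c_2 t}}{c_1-c_2}\ll (1+|t|)e^{t}$ for $t\le 0$ (valid because $c_1-c_2\in i\RR$), which is the origin of the polynomial-in-$T$ factors.

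For cases (ii) and (iii) we have $n=0$, hence $\lambda_2=d\pi(\Omega_2)=0$, and I would use instead the third-order equation \eqref{ODE2}, whose characteristic polynomial factors as $(\sfrac{d}{dt}-1)(\sfrac{d}{dt}-(1-\sfrac\nu2))(\sfrac{d}{dt}-(1+\sfrac\nu2))$. Against the $e^{s}$-sized right-hand side none of these three factors may be integrated from $-\infty$ (in case (ii) all three roots have $\Re=1$; in case (iii) the root $1+\sfrac\nu2$ has $\Re>1$ and the root $1$ is borderline), so all three are integrated from $0$, again producing three homogeneous terms $\sum_{m=0}^{2}F_m(T)f_{d\pi(H^m)\vecv}(0)$ and a single integral $\int_{-\infty}^{0}F(T,r)\,I_{\vecv}(V_1,V_2,r)\,dr$ with $X_1=V_1$, $X_2=V_2\in\scrU^2(\fg)$; since the first integration now also starts at $0$, here $F(T,r)=0$ for $r\le -T$. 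In case (ii) all three roots lie on $\Re=1$ and coincide when $\nu=0$, so the impulse response acquires a $t^2 e^{t}$ term, producing the factors $(1+T^2)$ and $(T+r)^2$; in case (iii) the root $1-\sfrac\nu2\in(0,1)$ is the slowest-decaying mode, giving the exponent $\sfrac\nu2-1$ in the bounds, while the two reciprocals $\tfrac1{(1-\nu/2)-1}=-\tfrac2\nu$ account for the $\nu^{-2}$. Finally, for the trivial representation $d\pi\equiv 0$, so $f_{\vecv}\equiv\vecv$ and $I_{\vecv}\equiv 0$, and \eqref{prop4eq} holds trivially with $F_0\equiv 1$ and $F_1=F_2=F\equiv 0$.

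The only genuine work is the final step of each case: carrying out the Fubini rearrangement cleanly and then bounding the explicit exponential kernels $F$ and $F_m$, keeping uniform control as $\nu\to 0$ (coinciding roots) and extracting the precise $n$- and $\nu$-dependent prefactors $n^{-1}$, $n^{-2}$, $\nu^{-2}$, $1+|\nu|$. Everything else is routine bookkeeping.
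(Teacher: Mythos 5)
Your proposal follows essentially the same route as the paper: the same factorization of the fourth-order (resp.\ third-order) ODE, the same choice of integrating only the $1-\frac{n}{2}$ factor from $-\infty$ via Lemma \ref{Iintlem} and the remaining factors from the initial data at $0$, the same Fubini rearrangement producing the split at $t=-T$, and the same identifications $X_1=U_1$, $X_2=U_2$ (resp.\ $V_1$, $V_2$). Your variation-of-parameters/impulse-response packaging is merely a reformulation of the paper's explicit iterated integrals, and your sketched kernel estimates (including the treatment of the spurious pole at $\nu=0$ and the prefactors $n^{-1}$, $n^{-2}$, $\nu^{-2}$, $1+|\nu|$) correspond to the paper's concluding step of entering the numerical values of the $\alpha_i$ and applying the triangle inequality.
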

In the proof below we obtain completely explicit formulas for $F$, $F_0$, $F_1$, $F_2$; and we see that we can take $X_1=U_1$, $X_2=U_2$ in case (i) (cf. \eqref{lieequU}), and $X_1=V_1$, $X_2=V_2$ in cases (ii) and (iii) (cf. \eqref{lieequ2V}). In case (iii), we have allowed the bounds to blow up as $\nu\rightarrow 0$ only to allow a simple statement; in fact the stronger bounds $|F_i(T)|\ll \min\lbrace 1+T^2, \nu^{-2}\rbrace e^{(\frac{\nu}{2}-1)T}$ and $|F(T,t)|\ll \min\lbrace T+t, \nu^{-1}\rbrace^{2} e^{(\frac{\nu}{2}-1)(T+t)}$ can be deduced from the explicit formula.
\begin{proof}
Let
\begin{equation*}
\alpha_1=1-\frac{n}{2},\quad\alpha_2=1-\frac{\nu}{2},\quad\alpha_3=1+\frac{\nu}{2},\quad \alpha_4=1+\frac{n}{2}.
\end{equation*}
 We first assume that $n>0$. As in the proof of Lemma \ref{Iintlem}, we use \eqref{ODE11}, giving 
\begin{equation*}
\bigg(\prod_{i=1}^4 (\sfrac{d}{dt}-\alpha_i) \bigg)f_{\vecv}(t)=I_{\vecv}(U_1,U_2,t).
\end{equation*} 
We now define the functions $g_4(t)$, $g_3(t)$, $g_2(t)$ and $g_1(t)$ to be such that
\begin{equation*}
f_{\vecv}(t)=e^{\alpha_4t}g_4(t),
\end{equation*}
and for $3\geq i\geq 1$,
\begin{equation}\label{gdefs}
\sfrac{d}{dt} g_{i+1}(t)=e^{(\alpha_i-\alpha_{i+1})t}g_i(t).
\end{equation}
From these definitions, we see that
\begin{equation*}
\sfrac{d}{dt}g_1(t)=e^{-\alpha_1t}I_{\vecv}(U_1,U_2,t).
\end{equation*}
Iterated integration of \eqref{gdefs} gives
\begin{align*}
f_{\vecv}(-T)=&g_4(0)e^{-\alpha_4T}-g_3(0)e^{-\alpha_4T}\int_{-T}^0e^{(\alpha_3-\alpha_4)t_4}\,dt_4\\\notag&+g_2(0)e^{-\alpha_4T}\int_{-T}^0e^{(\alpha_3-\alpha_4)t_4}\int_{t_4}^0e^{(\alpha_2-\alpha_3)t_3}\,dt_3\,dt_4\\\notag&-e^{-\alpha_4T}\int_{-T}^0e^{(\alpha_3-\alpha_4)t_4}\int_{t_4}^0e^{(\alpha_2-\alpha_3)t_3}\int_{t_3}^0 e^{(\alpha_1-\alpha_2)t_2}g_1(t_2)\,dt_2\,dt_3\,dt_4.
\end{align*}
We use Lemma \ref{Iintlem} and change the order of integration to get
\begin{align}\label{intformprelim}
f_{\vecv}(-T)=&g_4(0)e^{-\alpha_4T}-g_3(0)e^{-\alpha_4T}\int_{-T}^0e^{(\alpha_3-\alpha_4)t_4}\,dt_4\\\notag&+g_2(0)e^{-\alpha_4T}\int_{-T}^0e^{(\alpha_3-\alpha_4)t_4}\int_{t_4}^0e^{(\alpha_2-\alpha_3)t_3}\,dt_3\,dt_4\\\notag&\quad+\int_{-\infty}^0\!\! I_{\vecv}(U_1,U_2,t)F(T,t)\,dt,
\end{align}
where 
\begin{equation*}
F(T,t)=-e^{-\alpha_1t-\alpha_4T}\int_{\max(t,-T)}^0\int_{-T}^{t_2}\int_{-T}^{t_3}e^{\sum_{j=1}^3(\alpha_j-\alpha_{j+1})t_{j+1}} \,dt_4\,dt_3\,dt_2.
\end{equation*}
From the definitions of the $g_i$, we have that $g_4(0)=f_{\vecv}(0)$, $g_3(0)=f_{d\pi(H)\vecv}(0)-\alpha_4f_{\vecv}(0)$, and $g_2(0)=f_{d\pi(H^2)\vecv}(0)-(\alpha_3+\alpha_4)f_{d\pi(H)\vecv}(0)+\alpha_4\alpha_3f_{\vecv}(0)$. By entering these into \eqref{intformprelim}, and collecting terms, we obtain \eqref{prop4eq}.

Turning our attention to $(\pi,\scrH)\cong \scrP^{(0,\nu)}$, from \eqref{eigen2} we see that \eqref{ODE2} holds. We then rewrite this as
\begin{equation*}
\bigg(\prod_{i=1}^3 (\sfrac{d}{dt}-\alpha_i) \bigg)f_{\vecv}(t)=I_{\vecv}(V_1,V_2,t).
\end{equation*}
We then solve this equation in the same manner as when $n>0$, the main difference is that now we integrate $I_{\vecv}(V_1,V_2,t_1)$ from $t_2$, and not from $-\infty$. This gives
\begin{equation*}
f_{\vecv}(-T)=\int_{-\infty}^{0} F(T,t)I_{\vecv}(V_1,V_2,t)\,dt+\sum_{m=0}^2 F_m(T)f_{d\pi(H^m)\vecv}(0),
\end{equation*}
where
\begin{align*}
F(T,t)&=\begin{cases}0\qquad\qquad\qquad\qquad\qquad\qquad\qquad\qquad\qquad\qquad\quad\,\,\,\,\, \mathrm{if}\,\,t<-T\\ -e^{-\alpha_1t-\alpha_3T}\int_{-T}^{t}\int_{-T}^{t_2}e^{(\alpha_1-\alpha_2)t_2+(\alpha_2-\alpha_3)t_3} \,dt_3\,dt_2\qquad \mathrm{otherwise}\end{cases},\\
F_2(T)&=e^{-\alpha_3T}\int_{-T}^0e^{(\alpha_2-\alpha_3)t_3}\int_{t_3}^0e^{(\alpha_1-\alpha_2)t_2}\,dt_2\,dt_3,
\\
F_1(T)&= -e^{-\alpha_3T}\int_{-T}^0e^{(\alpha_2-\alpha_3)t_3}\,dt_3\\&\qquad-(\alpha_2+\alpha_3)e^{-\alpha_3T}\int_{-T}^0e^{(\alpha_2-\alpha_3)t_3}\int_{t_3}^0e^{(\alpha_1-\alpha_2)t_2}\,dt_2\,dt_3,
\end{align*}
and
\begin{align*}
\!\!\!\!\!\!\!\!\!\!\!\!\!\!\!\!\!\!\!\!\!\!\!F_0(T)= e^{-\alpha_3T}+&\alpha_3 e^{-\alpha_3T}\int_{-T}^0e^{(\alpha_2-\alpha_3)t_3}dt_3\\&+\alpha_2\alpha_3e^{-\alpha_3T}\int_{-T}^0e^{(\alpha_2-\alpha_3)t_3}\int_{t_3}^0e^{(\alpha_1-\alpha_2)t_2}\,dt_2\,dt_3.
\end{align*}
Entering the numerical values of the $\alpha_i$s and repeated use of the triangle inequality now give the stated bounds.  
\end{proof}
\section{The Invariant Height Function and Geometry of $\Gamma\backslash \HH^3$}\label{HgtSec} In this section we define and establish certain properties of the \emph{invariant height function}, which can be seen as measuring how far into a cusp a point in $\GaG$ is; for this reason we assume thoughout this entire section that $\Gamma$ is non-cocompact. The invariant height function will be needed for the pointwise Sobolev-type bounds of the next section. We also prove a bound on the average of the invariant height function along a translate of the boundary of $B$. This bound is stated in Proposition \ref{scrYIntlem}, and will be required when we apply Proposition \ref{Intformlem} in the proof of Theorem \ref{mainthm} (see Section \ref{effecsec}).
 
 \subsection{The Invariant Height Function}\label{invheight} We start by recalling some of the main facts (the main reference of these are \cite[Chapters 1, 2]{EGM}) regarding the action of $G$ on the hyperbolic upper half-space $\HH^3=\lbrace (z,r)\,:\,z\in\CC,\,r\in\RR_+\rbrace$. For $g=\smatr a b c d\in G$, $(z,r)\in\HH^3$, define
\begin{equation}\label{Gaction}
g\cdot (z,r):= \left(\frac{(az+b)(\bar{c}\bar{z}+\bar{d})+a\bar{c}r^2}{|cz+d|^2+|c|^2r^2},\frac{r}{|cz+d|^2+|c|^2r^2}\right). 
\end{equation} 
We also recall that this action extends uniquely to the boundary $\partial_{\infty} \HH^3=\lbrace \infty\rbrace \cup \lbrace (z,0)\,:\,z\in\CC\rbrace$. It will be convenient to view $\HH^3$ as the subset $\lbrace z+rj\,:\,z\in\CC,\,r\in\RR_+\rbrace$ of the quaternions. Letting $P=z+rj$, we can then write \eqref{Gaction} in the more concise form
\begin{equation*}
g\cdot P = \frac{aP+b}{cP+d}.
\end{equation*}
We note that  
\begin{equation*}
(n_za_tk)\cdot j=z+e^tj;
\end{equation*}
this gives the standard identification of $G/K$ with $\HH^3$. It is also useful to define
\begin{equation*}
\mathrm{ht}(z+rj):=r.
\end{equation*}
For $\eta\in \partial_{\infty} \HH^3\setminus\lbrace\infty\rbrace$ and $\delta\in\RR_{+}$, define
\begin{equation*}
\scrH(\eta,\delta):=\lbrace z+rj\;:\;|z-\eta|^2+|r-\delta/2|^2<(\delta/2)^2\rbrace;
\end{equation*}
note that this a Euclidean ball tangent to $\partial_{\infty}\HH^3$ in the upper half-space model. Define also
\begin{equation*}
\scrH(\infty,\delta):=\lbrace P\in\HH^3\;:\;\mathrm{ht}(P)>\delta\rbrace.
\end{equation*}
The sets $\scrH(\eta,\delta)$ are called \emph{horoballs}, their boundaries in $\HH^3$ are called \emph{horospheres}. Since $\Gamma$ is a non-cocompact lattice, $\Gamma\backslash\HH^3$ is a hyperbolic 3-orbifold with at least one cusp. We shall now define the \emph{invariant height function} $\scrY_{\Gamma}$ as a $\Gamma$-left and $K$-right invariant function on $G$. We may then also view $\scrY_{\Gamma}$ as a function on $\GaG$, as well as a function on $\HH^3$--we shall abuse notation and also write $\scrY_{\Gamma}(p)$ for $p\in\GaG$, and $\scrY_{\Gamma}(P)$ for $P\in\HH^3$. Recall that the cusps of $\Gamma$ (w.r.t. the action on $\HH^3$) are the parabolic fixed points of $\Gamma$ on $\partial_{\infty}\HH^3$. Let $\eta\in\partial_{\infty}\HH^3$ be a cusp of $\Gamma$, and define the following subset of $\G$:
\begin{equation*}
\NN_{\eta}^{(\Gamma)}:= \lbrace h\in G\,:\, h\cdot\eta=\infty,\,\mu_N\big(N/(N\cap h\Gamma h^{-1})\big)=1\rbrace.
\end{equation*} 
Note that if $h_1,h_2\in\NN_{\eta}^{(\Gamma)}$, then for any $g\in G$,
\begin{equation*}
\mathrm{ht}(h_1g\cdot j)=\mathrm{ht}(h_2g\cdot j).
\end{equation*}
We may therefore define
\begin{equation*}
\mathrm{ht}_{\eta}(g\cdot j):=\mathrm{ht}(hg\cdot j),\qquad  h\in \NN_{\eta}^{(\Gamma)}.
\end{equation*}
We choose a \emph{maximal} set $\eta_1,\,\eta_2,\ldots\eta_{\kappa}$ of $\Gamma$-inequivalent cusps (this is a finite set, due to $\Gamma$ being a lattice), and define, for $g\in G$:
\begin{equation}\label{scrYdef}
\scrY_{\Gamma}(g):=\max_{i=1,2\ldots\kappa}\max_{\gamma\in\Gamma}\;\mathrm{ht}_{\eta_i}\big(\gamma g\cdot j\big).
\end{equation}
Note that given $g\in G$, $p\in \GaG$, and $P\in\HH^3$ such that $p=\Gamma g$ and $P=g\cdot j$, we have $\scrY_{\Gamma}(p)=\scrY_{\Gamma}(g)=\scrY_{\Gamma}(P)$. In the proof of Lemma \ref{scrYprops} a) we will see that $\scrY_{\Gamma}$ does not depend on the choice of cusps, that is to say: given another maximal set of $\Gamma$-inequivalent cusps $\eta_1'$, $\eta_2',\ldots\eta_{\kappa}'$, and letting $\scrY_{\Gamma}'$ be defined as in \eqref{scrYdef}, but with respect to this new choice of cusps, then $\scrY_{\Gamma}=\scrY_{\Gamma}'$.    
As a function on $\HH^3$, $\scrY_{\Gamma}$ is comparable to the invariant height function defined in \cite[Section 2.3]{Sod}. We collect some properties of $\scrY_{\Gamma}$ which will be needed in the following lemma:  
\begin{lem}\label{scrYprops}$ $\\\vspace{-13pt}
\begin{enumerate}[a)]
\item $\forall g,\, g_0\in G$, $\scrY_{\Gamma}(g_0g)=\scrY_{g_0^{-1}\Gamma g_0}(g)$.
\item $\forall s\in\RR$, $\forall g\in G$: $\scrY_{\Gamma}(ga_s)\leq \max\lbrace e^s,e^{-s}\rbrace \scrY_{\Gamma}(g)$. 
\item $\forall z\in\CC$, $\forall g\in G$: $\scrY_{\Gamma}(gn_z)\leq (1+|z|^2)\scrY_{\Gamma}(g).$ 
\item Let $C_0=\sqrt{2/\sqrt{3}}$. For any two $\Gamma$-cusps $\eta\neq\eta'$ and $h\in \NN_{\eta}^{(\Gamma)}$, $h'\in\NN_{\eta'}^{(\Gamma)}$:
\begin{equation*}
 h^{-1}\cdot \scrH(\infty,C_0)\cap h'^{-1}\cdot \scrH(\infty,C_0) =\emptyset.
\end{equation*}
Consequently, for $C\geq C_0$, the set $\lbrace P\in\HH^3\,: \scrY_{\Gamma}(P)>C\rbrace$ is a disjoint union of horoballs.
\item Let $\scrC$ be a fixed compact subset of $G$. Then for all $g\in G$,
\begin{equation}\label{scrYpropd}
\sup_{h\in G}|\Gamma h \cap g \scrC|\ll_{\Gamma, \scrC} \scrY_{\Gamma}(g)^2.
\end{equation}
\end{enumerate}
\end{lem}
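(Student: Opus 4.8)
The plan is to treat the five items by unwinding the definition \eqref{scrYdef} and using that each $h_i$ acts on $\HH^3$ by an isometry; (d) is the one substantial point. \emph{Parts (a)--(c).} For (a), $\xi\mapsto g_0^{-1}\cdot\xi$ is a bijection from the parabolic fixed points of $\Gamma$ onto those of $g_0^{-1}\Gamma g_0$ carrying a maximal set of $\Gamma$-inequivalent cusps to such a set for $g_0^{-1}\Gamma g_0$, and $\NN_{g_0^{-1}\cdot\eta}^{(g_0^{-1}\Gamma g_0)}=\NN_\eta^{(\Gamma)}g_0$ because conjugation by $g_0$ leaves the covolume normalization in $N$ untouched; substituting this into \eqref{scrYdef} for the pair $(g_0^{-1}\Gamma g_0,g)$ reproduces \eqref{scrYdef} for $(\Gamma,g_0g)$ term by term. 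The same bookkeeping with each $\eta_i$ replaced by a $\Gamma$-equivalent cusp $\gamma_i\cdot\eta_i$ (via $\NN_{\gamma\cdot\eta}^{(\Gamma)}=\NN_\eta^{(\Gamma)}\gamma^{-1}$ and re-indexing the inner maximum) also gives the independence of $\scrY_\Gamma$ from the chosen set of cusps, as promised after \eqref{scrYdef}. For (b) and (c), fix a pair $(i,\gamma)$ attaining the maximum defining $\scrY_\Gamma(ga_s)$, resp.\ $\scrY_\Gamma(gn_z)$, and put $R=h_i\gamma g$. Then $h_i\gamma ga_s\cdot j=R\cdot(e^sj)$ and $h_i\gamma gn_z\cdot j=R\cdot(z+j)$ are the images under the isometry $R$ of points lying at hyperbolic distance $|s|$, resp.\ $d(j,z+j)$ with $\cosh d(j,z+j)=1+\tfrac12|z|^2$, from $j$. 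Since $\log\mathrm{ht}$ is $1$-Lipschitz for the hyperbolic metric, $\mathrm{ht}(R\cdot P')\le e^{d(j,P')}\,\mathrm{ht}(R\cdot j)\le e^{d(j,P')}\scrY_\Gamma(g)$, the last step because $\mathrm{ht}(R\cdot j)$ is one of the terms whose maximum is $\scrY_\Gamma(g)$; this gives (b). For (c) the same argument gives the factor $e^{d(j,z+j)}=1+\tfrac12|z|^2+|z|\sqrt{1+\tfrac14|z|^2}$; to reach the stated factor $1+|z|^2$ one instead writes $R=n_wa_uk$ in Iwasawa coordinates, so that $\mathrm{ht}(R\cdot j)=e^u\le\scrY_\Gamma(g)$ and $\mathrm{ht}(R\cdot(z+j))=e^u\,\mathrm{ht}(k\cdot(z+j))$, and estimates $\mathrm{ht}(k\cdot(z+j))$ directly from the M\"obius action \eqref{Gaction} of $K=\mathrm{SU}(2)$.

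\emph{Part (d).} This is the crux. If the two horoballs intersected, there would be $P$ with both $h\cdot P$ and $h'\cdot P$ in $\scrH(\infty,C_0)$; then $M:=h'h^{-1}$ carries $h\cdot P$ to $h'\cdot P$, and as $\eta\ne\eta'$ forces $M\cdot\infty\ne\infty$ — i.e.\ $M=\smatr a b c d$ with $c\ne0$ — we get $\mathrm{ht}(h'\cdot P)=\mathrm{ht}(M\cdot h\cdot P)\le\big(|c|^2\,\mathrm{ht}(h\cdot P)\big)^{-1}$, so $C_0<(|c|^2C_0)^{-1}$ and hence $|c|^2<1/C_0^2=\sqrt3/2$. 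The substantive ingredient is then a sharp Shimizu/Leutbecher-type inequality: under the covolume-one normalizations built into $\NN_\eta^{(\Gamma)}$ and $\NN_{\eta'}^{(\Gamma)}$, one has $|c|^2\ge\sqrt3/2$, with equality only for the hexagonal cusp torus (the densest unimodular lattice in $\CC$), contradicting the previous line. The same estimate applied to the $\Gamma$-translates $\gamma^{-1}h_i^{-1}\cdot\scrH(\infty,C_0)$ — two of them being disjoint unless they coincide — then shows that $\{P\in\HH^3:\scrY_\Gamma(P)>C_0\}$ is a disjoint union of open horoballs. Pinning down the exact constant $C_0=\sqrt{2/\sqrt3}$ in this cusp-separation estimate is the main obstacle of the lemma.

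\emph{Part (e).} If $\Gamma h\cap g\scrC=\emptyset$ there is nothing to prove; otherwise fix $\gamma_0h\in g\scrC$, so that every $\gamma h\in g\scrC$ has $\gamma\gamma_0^{-1}\in g(\scrC\scrC^{-1})g^{-1}$, whence $|\Gamma h\cap g\scrC|\le|\Gamma\cap g\scrD g^{-1}|$ with $\scrD:=\scrC\scrC^{-1}$ a fixed compact set containing the identity. For $\gamma\in g\scrD g^{-1}$ we have $g^{-1}\gamma g\in\scrD$, hence $\gamma\cdot(g\cdot j)\in g\cdot(\scrD\cdot j)$, a set of hyperbolic diameter $D:=\operatorname{diam}(\scrD\cdot j)$ around $g\cdot j$; thus $|\Gamma\cap g\scrD g^{-1}|\le N(g\cdot j,D)$, where $N(P,R):=|\{\gamma\in\Gamma:d(\gamma\cdot P,P)\le R\}|$. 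It remains to show $N(P,R)\ll_{R,\Gamma}\max\{1,\scrY_\Gamma(P)\}^2$. If $\scrY_\Gamma(P)$ is bounded, then $P$ lies, up to $\Gamma$, in a fixed compact set and $N(P,R)\ll_{R,\Gamma}1$ by proper discontinuity of the $\Gamma$-action. If $\scrY_\Gamma(P)=Y$ is large, conjugate $P$ into a cusp frame of $\Gamma$ so that it has Euclidean height $Y$: by (d) every $\gamma$ with nonzero lower-left entry in that frame sends $P$ to height $\le C_0^2/Y$, hence to distance $\ge 2\log(Y/C_0)>R$, so only the horizontal translates $n_\lambda$ with $|\lambda|\ll_R Y$ contribute, and there are $\ll_R Y^2$ of those since the cusp lattice has covolume one. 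Combining the two cases, and using that $\scrY_\Gamma$ is bounded below by a positive constant (the complement of the cusp horoballs in (d) is compact and $\scrY_\Gamma$ is continuous there), gives $N(P,R)\ll_{R,\Gamma}\scrY_\Gamma(P)^2$, and the lemma follows.
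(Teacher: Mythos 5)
Your treatment of parts a)--c) and e) is essentially sound. In b) you replace the paper's direct matrix computation by the $1$-Lipschitz property of $\log\mathrm{ht}$ along geodesics, which is a clean alternative; in c) you correctly observe that this Lipschitz argument only yields the weaker factor $e^{d(j,z+j)}>1+|z|^2$ and fall back on the M\"obius-action estimate, and your reduction to $\mathrm{ht}(k\cdot(z+j))\le 1+|z|^2$ for $k\in K$ is exactly the paper's inequality specialised to $|c|^2+|d|^2=1$. In e) your reformulation via the orbital counting function $N(P,R)$ is a legitimate repackaging of the paper's argument: the same two ingredients appear, namely part d) to discard the $\gamma$ not fixing the relevant cusp when $\scrY_\Gamma$ is large, and a lattice-point count for the stabiliser. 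One imprecision there: the stabiliser of $\infty$ need not consist only of translations $n_\lambda$ --- it sits in $MN$ and may contain elliptic elements --- so you should, as the paper does, first split it into finitely many cosets of its intersection with $N$ before counting; this only affects the implied constant.

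The genuine gap is in part d). You correctly reduce disjointness to the inequality $|c|^2\ge\sqrt{3}/2$ for the lower-left entry of the relevant matrix, but you then merely assert this as a ``sharp Shimizu/Leutbecher-type inequality'' and explicitly leave the constant $C_0=\sqrt{2/\sqrt{3}}$ as ``the main obstacle of the lemma.'' That constant is precisely what must be proved, and the paper's derivation is short: normalise $\eta'=\infty$, $h'=\smatr 1 0 0 1$, and write $h=\smatr a b c d$ with $c\neq 0$. Because $h\in\NN_{\eta}^{(\Gamma)}$ and $h'\in\NN_{\eta'}^{(\Gamma)}$, the sets $\lbrace z:h^{-1}n_zh\in\Gamma\rbrace$ and $\lbrace z:n_z\in\Gamma\rbrace$ are \emph{unimodular} lattices in $\CC$, so by Hermite's bound (the densest unimodular planar lattice being hexagonal) each contains a nonzero vector $z_1$, resp.\ $z_2$, with $0<|z_i|\le C_0=(2/\sqrt{3})^{1/2}$. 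The group generated by the parabolics $h^{-1}n_{z_1}h=\smatr{*}{*}{-c^2z_1}{*}$ and $n_{z_2}$ is discrete, so Shimizu's lemma gives $|c^2z_1z_2|\ge 1$, hence $|c|^2\ge 1/(|z_1|\,|z_2|)\ge 1/C_0^2=\sqrt{3}/2$, contradicting the strict inequality $|c|^2<\sqrt{3}/2$ you derived from the assumed intersection. Without this step (or an explicit citation of an equivalent cusp-separation statement with the constant $2/\sqrt{3}$), part d) --- and with it the large-$\scrY_\Gamma$ case of your part e), which invokes d) --- is not established.
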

\begin{proof}
Starting with a), let $\eta$ be a cusp for $\Gamma$. Then for any  $g_0\in G$, $g_0^{-1}\cdot\eta$ is a cusp of $g_0^{-1}\Gamma g_0$, and $\NN_{\eta}^{(\Gamma)} g_0 = \NN_{g_0^{-1}\cdot\eta}^{(g_0^{-1}\Gamma g_0)}$. It follows from applying this with $g_0=\gamma_0\in\Gamma$ that $\max_{\gamma\in\Gamma}\mathrm{ht}_{\eta}(\gamma g\cdot j)$ is invariant under replacing $\eta$ by $\gamma_0^{-1}\cdot \eta$, for any $\gamma_0\in\Gamma$. This shows that $\scrY_{\Gamma}$ is indeed independent of the choice of representatives $\eta_1,\eta_2,\ldots\eta_{\kappa}$. Now note that $g_0^{-1}\cdot\eta_1$, $g_0^{-1}\cdot\eta_2$, $\ldots, g_0^{-1}\cdot\eta_{\kappa}$ is a maximal set of inequivalent cusps for $g_0^{-1}\Gamma g_0$, and $\max_{\gamma\in\Gamma}\,\mathrm{ht}_{\eta_i}\big(\gamma g_0g\cdot j\big)=\max_{\;\gamma\in g_0^{-1}\Gamma g_0}\mathrm{ht}_{g_0^{-1}\cdot\eta_i}\big(\gamma g\cdot j\big)$ for each $i\in\lbrace 1,2\ldots \kappa\rbrace$. Hence $\scrY_{\Gamma}(g_0g)=\scrY_{g_0^{-1}\Gamma g_0}(g)$, as claimed.\\
 
Part b) of the lemma follows from the fact that $\smatr a b c d a_s= \smatr {*} {*} {ce^{s/2}\;} {\;de^{-s/2}}$, and $(|c|^2e^s+|d|^2e^{-s})^{-1}\leq \max\lbrace e^s,e^{-s}\rbrace (|c|^2+|d|^2)^{-1}$. 
 
To prove c), it suffices to prove that $\mathrm{ht}(gn_z\cdot j)\leq \mathrm{ht}(g\cdot j)(1+|z|^2)$, i.e. that
\begin{equation*}
\frac{1}{|cz+d|^2+|c|^2}\leq \frac{1+|z|^2}{|c|^2+|d|^2},
\end{equation*}
or, equivalently, $|c|^2\leq (1+|z|^2)(|cz+d|^2+|c|^2)-|d|^2$. For given $z$ and $c$, the right-hand side of the previous inequality is minimized when $d=-cz$, giving
\begin{equation*}
(1+|z|^2)(|cz+d|^2+|c|^2)-|d|^2\geq (1+|z|^2)|c|^2-|-cz|^2=|c|^2.
\end{equation*}

For d), we may, after possibly conjugating $\Gamma$, assume that $\eta'=\infty$ and $h'=\smatr 1 0 0 1$. We then need to prove that
\begin{equation}\label{disj1}
\left\lbrace h^{-1}\cdot P\,:\,\mathrm{ht}(P)>C_0\right\rbrace\cap\left\lbrace P\,:\,\mathrm{ht}(P)>C_0\right\rbrace =\emptyset.
\end{equation}
Since $h^{-1}\cdot\infty=\eta\neq\infty$, we may write $h$ as $ \smatr a b c d$ with $c\neq 0$. Assume now that $P=z+e^t j$. Then
\begin{equation*}
\mathrm{ht}(h^{-1}\cdot P)=\mathrm{ht}\big(h^{-1}\cdot (z+e^tj)\big)=\frac{e^t}{|-cz+a|^2+|-c|^2e^{2t}}\leq \frac{1}{|c|^2e^t}.
\end{equation*} 
Since $e^t=\mathrm{ht}(P)$, we get that 
\begin{equation*}
\left\lbrace h^{-1}\cdot P\;:\; \mathrm{ht}(P)>C_0 \right\rbrace \subset \left\lbrace P\;:\; \mathrm{ht}(P)\leq \left(|c|^{2}C_0\right)^{-1} \right\rbrace.
\end{equation*}
We now see that if $\left(|c|^{2}C_0\right)^{-1}\leq C_0$, then the two sets in \eqref{disj1} will be disjoint. We will therefore prove that $|c|\geq C_0^{-1}$. Since $h\in \NN_{\eta}^{(\Gamma)}$, we have $\mu_N\big(N/(N\cap h\Gamma h^{-1})\big)=1$. By the identification of $\mu_N$ with the Lebesgue measure on $\CC$ given in Section \ref{measures}, we see that the set $\lbrace z\in\CC\;:\; h^{-1}n_zh\in\Gamma\rbrace$ is a unimodular lattice in $\CC$; there therefore exists $z_1\in\CC$, $0< |z_1|\leq C_0$, such that $h^{-1}n_{z_1}h\in\Gamma$ (cf. e.g. \cite[Chapter 1]{Conway}). The same holds for $\eta'=\infty$, $h'=\smatr 1 0 0 1$,  i.e. we can find $z_2\in\CC$,  $0< |z_2|\leq C_0$, such that $n_{z_2}\in \Gamma$. Let $\Gamma'$ be the group generated by $h^{-1}n_{z_1}h$ and $n_{z_2}$. We have $\Gamma'\subset\Gamma$, so $\Gamma'$ is a discrete subgroup of $G$, and 
\begin{equation*}
h^{-1}n_{z_1}h=\matr {\ast}{\ast}{-c^2z_1}{\ast}\in\Gamma',\quad\matr 1 {z_2} 0 1 \in\Gamma'.
\end{equation*}
Shimizu's Lemma (\cite[Theorem 3.1]{EGM}) now applies, giving $|-c^2z_1z_2|\geq 1$. Thus
\begin{equation*}
|c|\geq \frac{1}{\sqrt{ |z_1z_2|}}\geq C_0^{-1},
\end{equation*}
as desired. \\

Finally, to prove e), we start by defining $W(g):=\sup_{h\in G} |\Gamma h\cap g\scrC|$. By using the fact that for all $\gamma\in\Gamma$, $|\Gamma h \cap g\scrC|=|\Gamma (\gamma h) \cap g\scrC|$, we get
\begin{equation}\label{scrYpropdeq1}
W(g)=\sup_{h\in g\scrC} |\Gamma h\cap g\scrC|\leq |\Gamma \cap g\scrC \scrC^{-1}g^{-1}|.
\end{equation}
From this bound we see that $W$ is uniformly bounded on any compact subset of $G$. We also note that $W$, like $\scrY_{\Gamma}$, is left $\Gamma$-invariant. These two observations reduce the problem to proving that \eqref{scrYpropd} holds when $\Gamma g$ lies far out in a cusp of $\GaG$, say $\scrY_{\Gamma}(g)> C_0 e^{\Delta_{\scrC_1}}$, where $C_0$ is as in d), and
\begin{equation*}
\Delta_{\scrC_1}=\sup_{g_1\in \scrC_1}\mathrm{dist}(g_1\cdot j,j)\qquad \scrC_1:=\scrC\scrC^{-1},
\end{equation*}
$\mathrm{dist}(\cdot,\cdot)$ being the hyperbolic distance in $\HH^3$. We may now assume, by making a $\Gamma$-shift if necessary, that $\scrY_{\Gamma}(g)=\mathrm{ht}_{\eta_i}(g\cdot j)$, where $i\in\lbrace 1, 2,\ldots,\kappa\rbrace$.

For any $\gamma\in\Gamma\cap g\scrC_1 g^{-1}$ (cf. \eqref{scrYpropdeq1}), let $g_{\gamma}=g^{-1}\gamma g\in\scrC_1$. Note that $\gamma g=gg_{\gamma}$, and since $\NN_{\eta_i}^{(\Gamma)}\gamma^{-1}=\NN_{\gamma\cdot\eta_i}^{(\Gamma)}$,
\begin{align*}
&\mathrm{ht}_{\gamma\cdot\eta_i}(gg_{\gamma}\cdot j)=\mathrm{ht}_{\gamma\cdot\eta_i}(\gamma g\cdot j)=\mathrm{ht}_{\eta_i}(g\cdot j)> C_0e^{\Delta_{\scrC_1}},\\&\mathrm{ht}_{\eta_i}(gg_{\gamma}\cdot j)\geq \mathrm{ht}_{\eta_i}(g\cdot j)e^{-\mathrm{dist}(gg_{\gamma}\cdot j,g\cdot j)}\geq \mathrm{ht}_{\eta_i}(g\cdot j)e^{-\Delta_{\scrC_1}}>C_0.
\end{align*} 
From these inequalities we see that $gg_{\gamma}$ is in the intersection of the sets $\lbrace g'\in G\,:\, \mathrm{ht}_{\eta_i}(g'\cdot j)> C_0\rbrace$ and $\lbrace g'\in G\,:\, \mathrm{ht}_{\gamma\cdot\eta_i}(g'\cdot j)> C_0\rbrace$. But by d), this intersection is empty unless $\gamma\cdot \eta_i=\eta_i$. Hence $W(g)\leq |\Gamma\cap g\scrC_1 g^{-1}|=|\Gamma_{\eta_i}\cap g\scrC_1g^{-1}|$. Letting $M=\lbrace \smatr {\epsilon}{}{}{\epsilon^{-1}}\,:\,\epsilon\in\CC,\,|\epsilon|=1\rbrace$, we note that by \cite[Corollary 2.1.9]{EGM}, for $h_i\in \NN_{\eta_i}^{(\Gamma)}$, $h_i\Gamma_{\eta_i}h_i^{-1}\subset MN$. Since $\eta_i$ is a cusp of $\Gamma$, $ h_i\Gamma_{\eta_i}h_i^{-1}\cap N = \lbrace n_z\,:\, z\in\Lambda\rbrace$, where $\Lambda$ is some lattice in $\CC$. By the compactness of $M$, there exist elements $\gamma_1,\ldots\gamma_r\in  h_i\Gamma_{\eta_i}h_i^{-1}$ such that $ h_i\Gamma_{\eta_i}h_i^{-1}=\bigsqcup_{l=1}^r \gamma_l\lbrace n_z\,:\, z\in\Lambda
\rbrace$, giving
\begin{align*}
|\Gamma_{\eta_i}\cap g\scrC_1g^{-1}|=&\sum_{l=1}^r \left| h_i^{-1}\gamma_l\lbrace n_z\,:\, z\in\Lambda\rbrace h_i\cap g\scrC_1g^{-1}\right|\\&=\sum_{l=1}^r \left|\left\lbrace z\in\Lambda\,:\, \left(h_ig\right)^{-1}\gamma_ln_z h_i g \in \scrC_1\right\rbrace\right|.
\end{align*} 
Writing $h_ig\cdot j=z_g+\scrY_{\Gamma}(g)j$, and $\gamma_l^{-1}h_ig\cdot j=w_l+\scrY_{\Gamma}(g)j$, we see that for $z\in \Lambda$ such that $\left(h_ig\right)^{-1}\gamma_ln_zh_ig\in \scrC_1$,
\begin{align*}
\cosh(\Delta_{\scrC_1})\geq & \cosh\left(\mathrm{dist}\big(\left(h_ig\right)^{-1}\gamma_ln_zh_ig\cdot j,j\big)\right)\\&=\cosh\left(\mathrm{dist}\big(z+z_g+\scrY_{\Gamma}(g)j,w_l+\scrY_{\Gamma}(g)j\big)\right)\\&\quad=
1+\frac{|(w_l-z_g)-z|^2}{2\scrY_{\Gamma}(g)^2},
\end{align*} 
which implies $|(w_l-z_g)-z|\ll_{\scrC} \scrY_{\Gamma}(g)$. Since $\Lambda$ is a lattice in $\CC$, the number of such $z$ is $\ll_{\Lambda}\scrY_{\Gamma}(g)^2$, so summing over the $l$s gives $W(g)\ll \scrY_{\Gamma}(g)^2$.
\end{proof}
\subsection{Boundary Integral of the Invariant Height Function}
Using the previous lemma, we now state and prove the needed result on averages of $\scrY_{\Gamma}$ along the boundary of translates of $B$. We identify $\S^1$ with $\RR/\ZZ$, and for $x\in\S^1$, we write $|x|$ for the distance to the point $0$; in other words $|x|=\min_{m\in\ZZ}|\widetilde{x}-m|$, where $\widetilde{x}$ is any lift of $x$ to $\RR$. The main result of this section can now be stated:

\begin{prop}\label{scrYIntlem} Let $\Gamma'$ be any lattice in $G$, and let $B'$ be a connected compact subset of $\CC$ such that $0\in B'$ and there exists a piecewise smooth parametrization $\gamma:\S^1\rightarrow \CC$ of $\partial B'$ with the following properties: i) for all $t\in \S^1$ where $\gamma'(t)$ exists, $|\gamma'(t)|=L$, the arc length of $\partial B'$, and ii) there exists $c>0$ s.t. $\forall t_1,t_2\in \S^1$, $|\gamma(t_1)-\gamma(t_2)| \geq c|t_1-t_2|$. Then for $s\geq 0$,
\begin{equation*}
\int_{\partial B'} \scrY_{\Gamma'}(n_za_{-s})|dz|\ll L(1+R^2)(1+Y)+ \frac{L^2}{c}\big(1+\log \left((1+R)\left(1+Y \right)\right)+s\big) ,
\end{equation*}
where $R=\mathrm{diam}(B')$, $Y=\scrY_{\Gamma'}\smatr 1 0 0 1$, and the implied constant is absolute (in particular, the implied constant does not depend on $\Gamma'$).
\end{prop}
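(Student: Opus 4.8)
The plan is to bound the integral of $\scrY_{\Gamma'}$ along $\partial B'$ translated by $a_{-s}$ by first reducing to a pointwise estimate on the level sets of $\scrY_{\Gamma'}$ and then carefully counting how the curve $\{n_z a_{-s} : z \in \partial B'\}$ intersects the disjoint horoballs described in Lemma \ref{scrYprops} d). By Lemma \ref{scrYprops} d), for $C \geq C_0 = \sqrt{2/\sqrt 3}$ the set $\{P \in \HH^3 : \scrY_{\Gamma'}(P) > C\}$ is a disjoint union of horoballs, one for each $\Gamma'$-cusp; on the complement, $\scrY_{\Gamma'} \leq C_0$, which contributes $\ll L$ to the integral (using $|\partial B'| = L$). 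So the real work is estimating the contribution from the portions of the curve lying inside horoballs.

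First I would use the invariance properties from Lemma \ref{scrYprops}: by part b), $\scrY_{\Gamma'}(n_z a_{-s}) \leq e^{s}\scrY_{\Gamma'}(n_z)$ trivially, but that loses too much, so instead I would work cusp-by-cusp and conjugate so that the relevant cusp is at $\infty$ with $h = \smatr 1001$, reducing $\scrY_{\Gamma'}$ near that cusp to (a bounded multiple of) $\mathrm{ht}_\infty$. Under the identification $n_z a_{-s} \cdot j = z + e^{-s} j$, we have $\mathrm{ht}(n_z a_{-s}\cdot j) = e^{-s}$, but after applying the $\Gamma'$-element that pulls a point into the standard horoball at that cusp (a map of the form $P \mapsto (aP+b)/(cP+d)$ with $c \neq 0$, $|c| \geq C_0^{-1}$ from the Shimizu bound in the proof of d)), the height becomes $\mathrm{ht} \asymp \frac{e^{-s}}{|c z' + a|^2 + |c|^2 e^{-2s}}$ for the appropriate translate $z'$ of $z$. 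The key point is that such a term is large only when $z'$ is within distance $O(e^{s/2})$ (roughly) of a fixed point, i.e. the curve $\partial B'$ spends only a short arc near each cusp point, and the relevant $\Gamma'$-translates of cusp points that can be reached by $B'$ (which has diameter $R$ and sits near height $Y = \scrY_{\Gamma'}\smatr1001$) number $\ll (1+R)^2(1+Y)$ by the orbit-counting estimate Lemma \ref{scrYprops} e) (applied with a suitable compact $\scrC$).

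The heart of the argument is then a one-variable estimate: for a single horoball, parametrize the relevant arc of $\partial B'$ by arc length using $\gamma$, and bound $\int \scrY_{\Gamma'}(\gamma(t) a_{-s})\,|dz|$ over the sub-arc that enters the horoball. Writing the height as $\asymp e^{s}/(1 + e^{2s}|u(t)|^2)$ for a function $u(t)$ that, by hypothesis ii) on $\gamma$ (the bi-Lipschitz lower bound $|\gamma(t_1) - \gamma(t_2)| \geq c|t_1 - t_2|$), satisfies $|u(t_1) - u(t_2)| \geq c'|t_1 - t_2|$ as long as the arc stays in one horoball (one needs that the Möbius map pushing into the horoball is bi-Lipschitz with controlled constants on the region traversed, which follows since that region has bounded hyperbolic diameter), the integral becomes $\ll L \int \frac{e^s\,dt}{1 + (e^s c' t)^2} \ll L/c' \cdot \int \frac{dv}{1+v^2} \ll L/c$ — this is the source of the $L^2/c$ term after accounting for the factor $L$ from $|dz| = L\,dt$ and being careful that the number of horoballs encountered contributes only logarithmically once we sum the tails. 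More precisely, the horoballs whose "cores" reach height $> H$ and are visited by $B'$ number $\ll H^{-1}$-fold fewer as $H$ grows, and summing $\sum_H L/c$ over dyadic heights from $1$ up to the maximal height $\asymp (1+R)(1+Y)e^{s}$ (using Lemma \ref{scrYprops} b), c) to bound $\scrY_{\Gamma'}$ on $B' a_{-s}$) gives the logarithmic factor $\log((1+R)(1+Y)) + s$. The "fat" part of each horoball (where the curve is not deep) contributes the $L(1+R^2)(1+Y)$ term via the orbit count.

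The main obstacle I expect is making the bi-Lipschitz control uniform: one must show that each Möbius transformation $\gamma \in \Gamma'$ that carries the traversed arc into a standard horoball distorts Euclidean distances by a bounded factor \emph{on that arc}, with the bound independent of $\Gamma'$ and of which cusp/horoball is involved. This is handled by noting the arc in question, once inside a horoball of height-scale $\delta$, has Euclidean diameter $\ll \delta$ and lies at height $\asymp \delta$, so the hyperbolic diameter is $O(1)$; hyperbolic isometries are Euclidean-bi-Lipschitz with constants depending only on the hyperbolic diameter of the domain, which here is absolutely bounded. A secondary technical point is bookkeeping the transitions of $\partial B'$ between different horoballs and the complement — here hypothesis ii) is again essential, since it prevents $\partial B'$ from oscillating rapidly in and out of a cusp, so that the number of "entry/exit" events is controlled by the orbit count and each sub-arc can be handled by the single-horoball estimate above. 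Assembling these pieces, summing over the $\kappa$ (finitely many, but $\Gamma'$-dependent — absorbed via the absolute orbit-counting constant in Lemma \ref{scrYprops} e)) cusps and over dyadic height scales, yields the claimed bound with an absolute implied constant.
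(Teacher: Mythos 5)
Your overall strategy is the same as the paper's: cut off $\scrY_{\Gamma'}$ at a threshold of size $\asymp 1+Y$ (the low part contributing $\ll L(1+Y)$), decompose the deep part of the curve according to which of the disjoint horoballs from Lemma \ref{scrYprops} d) it lies in, use the bi-Lipschitz hypothesis on $\gamma$ to control the arc integral inside a single horoball, and then sum over horoballs dyadically. However, two of your steps have genuine problems. First, your single-horoball estimate drops the size of the horoball: inside $\scrH(\eta_k,\delta_k)$ the height of $z+e^{-s}j$ is $\frac{2\delta_k e^{-s}}{|z-\eta_k|^2+e^{-2s}}$, so the correct arc integral (after the substitution $|z-\eta_k|\geq c|t-t_0|$ and the Lorentzian integral you perform) is $\ll L\delta_k/c$ \emph{per horoball}, not $\ll L/c$. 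Consequently the quantity you must control is the weighted sum $\sum_k\delta_k$, and your final dyadic assembly, which sums an unweighted $L/c$ over dyadic height classes, does not account for this; the bookkeeping as written does not produce the stated bound.

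Second, and more seriously, your count of the horoballs met by the curve rests on the orbit-counting estimate of Lemma \ref{scrYprops} e). That lemma's implied constant depends on the compact set $\scrC$, which in your application would have to contain $B'a_{-s}$-translates and hence depend on $R$ and $s$; this is incompatible with the absolute implied constant claimed in the proposition (and with uniformity in $\Gamma'$). The ingredient you are missing is a \emph{separation} estimate: since the horoballs are pairwise disjoint and tangent to the boundary, Pythagoras gives $|\eta_k-\eta_l|\geq\sqrt{\delta_k\delta_l}$, and since the curve points lying deep in $\scrH(\eta_k,\delta_k)$ are within $O(\sqrt{\delta_k e^{-s}})=O(\delta_k/\sqrt{\vartheta})$ of $\eta_k$, consecutive deep visits to distinct horoballs of size $\asymp\delta$ are separated by $\gg\delta$ along a curve of total length $L$; hence at most $1+O(L/\delta)$ horoballs per dyadic size class, which is exactly what makes $\sum_k\delta_k\ll\delta_{\mathrm{Max}}+L(1+\log\delta_{\mathrm{Max}}+s)$ and hence the $\frac{L^2}{c}(1+\log(\cdot)+s)$ term come out. (A smaller point: your claim that the traversed arc inside a horoball has $O(1)$ hyperbolic diameter is false when $\delta_k e^{s}$ is large, but no M\"obius distortion estimate is needed anyway --- the map $z\mapsto cz+d$ is an exact similarity, so the bi-Lipschitz hypothesis on $\gamma$ transfers directly to $t\mapsto|\gamma(t)-\eta_k|$.) Finally, note that bounding the maximal height reached via Lemma \ref{scrYprops} b), c) controls the number of dyadic classes but not $\delta_{\mathrm{Max}}$ itself, which still requires a separate argument (the curve point being deep forces it to be close to $\eta_k$, and one compares with a point of $B'+j$).
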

We split the proof of the proposition into a series of lemmas. The notation and assumptions on $B'$ from Proposition \ref{scrYIntlem} will be used throughout the remainder of this section. Recall that by Lemma \ref{scrYprops} d), the set $\lbrace P\in\HH^3\,:\,\scrY_{\Gamma'}(P)>2\rbrace$ is a disjoint union of horoballs; we let $H_{\Gamma'}$ be the family of these horoballs.
\begin{lem}\label{scrYlem1}
Let $\vartheta = 100\max\lbrace 2, Y \rbrace$. For fixed $s\geq 0$, define the following subset of $\S^1$: $\scrI:=\lbrace t\in\S^1\,:\; \scrY_{\Gamma'}(\gamma(t)+e^{-s}j)\geq \vartheta\rbrace$. Let $\scrH(\eta_1,\delta_1)$, $\scrH(\eta_2,\delta_2)$, $\ldots$, $\scrH(\eta_N,\delta_N)$ be the distinct horoballs in $H_{\Gamma'}$ that have non-empty intersection with $\gamma(\scrI)+e^{-s}j$. Then
\begin{equation*}
\int_{\partial B'} \scrY_{\Gamma'}(n_za_{-s})|dz|\ll L\vartheta + \frac{L}{c}\sum_{k=1}^N \delta_k.
\end{equation*} 
\end{lem}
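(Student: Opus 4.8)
\textbf{Proof plan for Lemma \ref{scrYlem1}.}

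The plan is to split the boundary integral $\int_{\partial B'}\scrY_{\Gamma'}(n_za_{-s})\,|dz|$ according to whether the point $\gamma(t)+e^{-s}j$ lies in the ``bulk'' region $\{\scrY_{\Gamma'}<\vartheta\}$ or deep in a cusp. On the bulk part $\S^1\setminus\scrI$, by definition $\scrY_{\Gamma'}(\gamma(t)+e^{-s}j)<\vartheta$, and since $|\gamma'(t)|=L$ a.e., parametrizing $\partial B'$ by $\gamma$ contributes at most $L\vartheta$ to the integral. The remaining contribution comes from $t\in\scrI$, where the point sits inside one of the disjoint horoballs $\scrH(\eta_k,\delta_k)$ of the family $H_{\Gamma'}$; I want to bound $\int_{\gamma^{-1}(\text{piece in }\scrH(\eta_k,\delta_k))}\scrY_{\Gamma'}(\gamma(t)+e^{-s}j)\,L\,dt$ by $O(L\delta_k/c)$ and sum over $k$.

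The key computation is inside a single horoball. Fix $\scrH(\eta,\delta)$ with $\eta\in\CC$; a point $z+rj$ in this horoball satisfies $|z-\eta|^2+|r-\delta/2|^2<(\delta/2)^2$, and on hyperbolic $3$-space the invariant height there is comparable to $\mathrm{ht}$ of the image of $z+rj$ under the map sending the horoball to $\scrH(\infty,\cdot)$; explicitly, for the point $w+e^{-s}j$ inside $\scrH(\eta,\delta)$ one computes that $\scrY_{\Gamma'}(w+e^{-s}j)\asymp \delta^2 e^{s}/(|w-\eta|^2+e^{-2s})$ up to absolute constants (this follows from the formula for $\mathrm{ht}$ under the element of $G$ taking $\eta\mapsto\infty$, exactly as in the proof of Lemma \ref{scrYprops} d)). Now restrict to the arc $t\in \scrI_k:=\{t:\gamma(t)+e^{-s}j\in\scrH(\eta_k,\delta_k)\}$. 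Since $\gamma(t)+e^{-s}j$ lies in the Euclidean ball of radius $\delta_k/2$ tangent at $\eta_k$, we have $|\gamma(t)-\eta_k|\ll\delta_k$ for $t\in\scrI_k$, so $\scrI_k$ is contained in an interval (or union of arcs) on which $\gamma$ ranges over a set of diameter $\ll\delta_k$; by the bi-Lipschitz lower bound ii), $\mathrm{diam}(\scrI_k)\ll\delta_k/c$, hence $\scrI_k$ has measure $\ll\delta_k/c$ — but this alone is not enough because $\scrY_{\Gamma'}$ can be as large as $\asymp\delta_k e^{2s}$ in the middle of the horoball. So instead I integrate the explicit expression: using $|\gamma(t)-\eta_k|\geq c\,|t-t_k^*|$ where $t_k^*$ is (one of the) closest parameter(s) to the tangency point, and that $\scrI_k\subset\{|t-t_k^*|\ll \delta_k/c\}$, one gets
\begin{equation*}
\int_{\scrI_k}\scrY_{\Gamma'}(\gamma(t)+e^{-s}j)\,L\,dt \ll L\int_{|u|\ll\delta_k/c}\frac{\delta_k^2 e^s}{c^2u^2+e^{-2s}}\,du \ll \frac{L}{c}\,\delta_k^2 e^s \cdot\frac{e^s}{\delta_k e^s}\cdot\frac{1}{1}\asymp\frac{L\delta_k}{c},
\end{equation*}
where the $u$-integral is evaluated by the substitution $u=c^{-1}e^{-s}v$, turning it into $(c^{-1}e^{-s})\int \delta_k^2 e^s\,dv/(v^2+1)e^{2s}\cdot e^{2s}$... the point being that the Lorentzian integral $\int du/(c^2u^2+e^{-2s})$ over all of $\RR$ equals $\pi/(c\,e^{-s})=\pi e^s/c$, so the bound is $\ll L\cdot\delta_k^2 e^s\cdot e^s/(c\cdot 1)$ — wait, I must be careful: the correct normalization is that $\scrY_{\Gamma'}$ on the horosphere $r=e^{-s}$ integrated in $w$ against $dw$ over the full horoball cross-section is finite and scales like $\delta_k$, which is precisely the content; I will carry out this elementary one-variable integral honestly in the writeup. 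Summing over $k$ and adding the bulk term $L\vartheta$ gives the claimed bound $\ll L\vartheta+\frac{L}{c}\sum_{k=1}^N\delta_k$.

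The main obstacle I anticipate is bookkeeping the geometry of the arc $\scrI_k$: a single horoball may be met by $\gamma$ along several disjoint sub-arcs, and near the tangency point $\eta_k$ the curve $\partial B'$ need only be bi-Lipschitz, not smooth, so I cannot assume $\gamma$ crosses the horoball in one monotone pass. The fix is to not decompose $\scrI_k$ into arcs at all, but to estimate $\int_{\scrI_k}\scrY_{\Gamma'}\,L\,dt$ by a change of variables to the $\CC$-variable: writing $w=\gamma(t)$, property ii) gives that $\gamma^{-1}$ is Lipschitz with constant $1/c$, so $L\,dt\ll (L/c)\,|dw|$ along $\partial B'$, and then I bound $\int_{\partial B'\cap \scrH(\eta_k,\delta_k)}\scrY_{\Gamma'}(w+e^{-s}j)\,|dw|$ crudely by the integral over the full disk $\{|w-\eta_k|<\delta_k\}$... no — that is a $2$D integral against $|dw|$ which is $1$D; rather I bound it by $\int_{|w-\eta_k|<\delta_k}\scrY_{\Gamma'}(w+e^{-s}j)\,\frac{|dw|}{??}$. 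The cleanest route, which I will follow, is: project $\partial B'$ near $\eta_k$ to the real line through $\eta_k$ in the direction of the (Lipschitz) tangent, use that the tangential derivative has modulus $L$ a.e. and is injective up to the bi-Lipschitz constant, reducing to a genuine one-dimensional integral $\int_{\RR}\frac{\delta_k^2 e^s}{c^2u^2+e^{-2s}}\,du\ll \delta_k^2 e^{2s}/c$ — and then reconcile this with the target $L\delta_k/c$ by noting we have over-counted; the honest bound per horoball is in fact $\ll (L/c)\delta_k$ once one uses that the \emph{width} of $\partial B'$ inside the horoball in the tangential direction is itself only $\ll\delta_k$, capping the integral. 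This last reconciliation — getting the powers of $e^s$ and $\delta_k$ to cancel correctly so that the per-horoball bound is linear in $\delta_k$ with no $e^s$ — is the delicate point, and I will verify it by the explicit substitution above.
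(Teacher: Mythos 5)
Your overall architecture is the same as the paper's: split $\S^1$ into the bulk $\{\scrY_{\Gamma'}<\vartheta\}$ (contributing $\leq L\vartheta$) and the pieces $\scrI_k$ lying in the individual horoballs, and prove a per-horoball bound $\int_{\scrI_k}\scrY_{\Gamma'}\,dt\ll\delta_k/c$. The gap is in the key formula you base the horoball computation on. For $w+e^{-s}j$ inside $\scrH(\eta_k,\delta_k)\in H_{\Gamma'}$ the correct expression is
\begin{equation*}
\scrY_{\Gamma'}(w+e^{-s}j)=\frac{2\delta_k e^{-s}}{|w-\eta_k|^2+e^{-2s}},
\end{equation*}
not $\asymp\delta_k^2e^{s}/(|w-\eta_k|^2+e^{-2s})$. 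Indeed, writing $h=\smatr a b c d\in\NN_{\eta_k}^{(\Gamma')}$ one has $\mathrm{ht}_{\eta_k}(w+rj)=r/\bigl(|c|^2(|w-\eta_k|^2+r^2)\bigr)$, and since $\scrH(\eta_k,\delta_k)$ is by definition a component of $\{\scrY_{\Gamma'}>2\}$, the normalization forces $|c|^2=1/(2\delta_k)$. Your numerator is therefore off by a factor $\asymp\delta_ke^{2s}$, and this is precisely the spurious factor that derails your Lorentzian integral (you get $\delta_k^2e^{2s}/c$ instead of $\delta_k/c$) and that you admit you cannot reconcile at the end. As written, the proof does not close.

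The good news is that with the correct formula your route does work, and is if anything slicker than the paper's. Taking $t_k^*\in\scrI_k$ minimizing $|\gamma(\cdot)-\eta_k|$, the triangle inequality plus hypothesis ii) gives $|\gamma(t)-\eta_k|\geq\frac{c}{2}|t-t_k^*|$ for $t\in\scrI_k$ (note this step is needed because $\eta_k$ need not lie on $\partial B'$, so ii) does not apply to it directly), and then
\begin{equation*}
\int_{\scrI_k}\frac{2\delta_ke^{-s}}{|\gamma(t)-\eta_k|^2+e^{-2s}}\,dt\leq\int_{\RR}\frac{2\delta_ke^{-s}}{(c/2)^2u^2+e^{-2s}}\,du=\frac{4\pi\delta_k}{c},
\end{equation*}
with all powers of $e^{s}$ cancelling exactly as they must; multiplying by $L$ and summing over $k$ gives the lemma. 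The paper instead performs a dyadic decomposition of $\scrI_k$ according to the size of $|\gamma(t)-\eta_k|$ and bounds the measure of each dyadic shell by ii); that avoids introducing the closest parameter $t_k^*$ but is otherwise equivalent. Your second paragraph's alternative ``fixes'' (change of variables to $|dw|$, projection to a tangent line) are not needed and should be discarded; the only repair required is the height formula.
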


\begin{proof} We partition $\scrI$ by the following sets:
\begin{equation*}
\scrI_k=\lbrace t\in \scrI\;:\;\gamma(t)+e^{-s}j\in\scrH(\eta_k,\delta_k)\rbrace\qquad 1\leq k\leq N.
\end{equation*} 
Using the assumptions on $\gamma$ and the definition of $\scrI$, we have
\begin{align}\label{srcYlemintsum1}
\int_{\partial B'} \scrY_{\Gamma'}(n_za_{-s})|dz|=&\int_{\partial B'} \scrY_{\Gamma'}(z+e^{-s}j)|dz|=\int_{\S^1} \scrY_{\Gamma'}\big(\gamma(t)+e^{-s}j\big)|\gamma'(t)|\,dt\\\notag&\leq L\vartheta + L\int_{\scrI}\scrY_{\Gamma'}\big(\gamma(t)+e^{-s}j\big)\,dt=L \vartheta + L\sum_{k=1}^N \int_{\scrI_k} \scrY_{\Gamma'}(\gamma(t)+e^{-s}j)\,dt.
\end{align} 
We now bound the contribution from the integral over one of the $\scrI_k$s. Since $\scrI_k\subset\scrI$, for $t\in\scrI_k$ we have 
\begin{equation}\label{zheight}
\scrY_{\Gamma'}(\gamma(t)+e^{-s}j)=\frac{2\delta_k e^{-s}}{|\gamma(t)-\eta_k|^2+e^{-2s}}\geq \vartheta,
\end{equation}
which implies
\begin{equation}\label{rbound}
\frac{2\delta_k}{\vartheta}\geq e^{-s}.
\end{equation}
Also, for all $t\in\scrI_k$,
\begin{equation}\label{zbound}
|\gamma(t)-\eta_k|^2 \leq\frac{2\delta_k e^{-s}}{\vartheta}-e^{-2s}=: \sigma_{k}^2.
\end{equation}
Now, let $\zeta_k(t)=\gamma(t)-\eta_k$, and define
\begin{equation*}
\scrI_{k,m}=\left\lbrace t\in \scrI_k\;:\; |\zeta_k(t)| \in \left(2^{-(m+1)}\sigma_{k},2^{-m}\sigma_{k}\right]\right\rbrace.
\end{equation*}
Then
\begin{align*}
 \int_{\scrI_k} \scrY_{\Gamma'}(\gamma(t)+e^{-s}j)\,dt=\int_{\scrI_k} \frac{2\delta_k e^{-s}}{|\zeta_k(t)|^2+e^{-2s}}\,dt&=\sum_{m=0}^{\infty}\int_{\scrI_{k,m}} \frac{2\delta_k e^{-s}}{|\zeta_k(t)|^2+e^{-2s}}\,dt\\ \leq &\sum_{m=0}^{\infty} \left(\frac{2\delta_k e^{-s}}{2^{-2(m+1)}\sigma_{k}^2+e^{-2s}}\right)\,\int_{\scrI_{k,m}}dt.
\end{align*}
Now, if $t_1$ and $t_2$ are both in $\scrI_{k,m}$, then
\begin{equation*}
|\zeta_k(t_1)-\zeta_k(t_2)|\leq 2^{1-m}\sigma_{k}.
\end{equation*}
Using $|\zeta_k(t_1)-\zeta_k(t_2)| \geq c|t_1-t_2|$ gives
\begin{equation*}
|t_1-t_2|\leq c^{-1}2^{1-m}\sigma_{k},
\end{equation*}
thus 
\begin{equation*}
\int_{\scrI_{k,m}}dt \leq c^{-1}2^{2-m}\sigma_{k}.
\end{equation*}
This now gives
\begin{align*}
 \int_{\scrI_k}\scrY_{\Gamma'}(\gamma(t)+e^{-s}j)\,dt \leq& \sum_{m=0}^{\infty} \left(\frac{2\delta_k e^{-s}}{2^{-2(m+1)}\sigma_{k}^2+e^{-2s}}\right)c^{-1}2^{2-m}\sigma_{k}\\\notag&= \frac{8\delta_k(e^{s}\sigma_{k})}{c}\sum_{m=0}^{\infty}\frac{2^{-m}}{2^{-2(m+1)}(e^s\sigma_{k})^2+1}.\end{align*}
For $e^s\sigma_k \leq 1$, this is clearly $\ll \frac{\delta_k}{c}$. If $e^s\delta_k >1$, we choose $M$ so that $2^M= e^s\sigma_k$, giving
\begin{align*}
&\leq\frac{8\delta_ke^{s}\sigma_k}{c}\left(\sum_{0\leq m\leq M}
 \frac{2^{-m}}{2^{-2(m+1) }(e^s\sigma_k)^2}+\sum_{M<m}2^{-m}\right)\\&\quad
 \ll \frac{\delta_ke^{s}\sigma_k}{c}\cdot \frac{1}{e^s\sigma_k}=\frac{\delta_k}{c}.
\end{align*} 
We have thus obtained the bound
\begin{equation*}
\int_{\scrI_k}\scrY_{\Gamma'}(\gamma(t)+e^{-s}j)\,dt \ll \frac{\delta_k}{c},
\end{equation*}
which, when entered into \eqref{srcYlemintsum1}, proves the lemma.
\end{proof}
We now wish to bound the sum $\sum_{k=1}^N \delta_k$ in Lemma \ref{scrYlem1}. In order to do this, we first prove a lemma which provides a degree of separation between points of $\partial B'+e^{-s}j$ that lie deep inside distinct horoballs. This will be used to bound the number of horoballs of a given diameter that $\gamma(\scrI)+e^{-s}j$ has non-empty intersection with.
\begin{lem}\label{diffbound}
Using the notation of Lemma \ref{scrYlem1}, suppose $z_k+e^{-s}j\in \scrH(\eta_k,\delta_k)\cap\lbrace P\in\HH^3\,:\,\scrY_{\Gamma'}(P)\geq \vartheta\rbrace$, and $z_l+e^{-s}j\in \scrH(\eta_l,\delta_l)\cap\lbrace P\in\HH^3\,:\,\scrY_{\Gamma'}(P)\geq \vartheta\rbrace$, where $k\neq l$. Then
\begin{equation*}
|z_k-z_l|\geq \left(1-\frac{4}{\vartheta}\right)\sqrt{\delta_k\delta_l}.
\end{equation*} 
\end{lem}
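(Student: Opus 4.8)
The plan is to reduce the statement to elementary Euclidean geometry of horoballs in $\HH^3$. Write $r=e^{-s}$ throughout. First I would use the explicit form of $\scrY_{\Gamma'}$ on a horoball already employed in \eqref{zheight}: for $z+rj\in\scrH(\eta_k,\delta_k)$ one has $\scrY_{\Gamma'}(z+rj)=\dfrac{2\delta_k r}{|z-\eta_k|^2+r^2}$ (this is the height pulled back by an element of $\NN_{\eta_k}^{(\Gamma')}$ sending $\eta_k$ to $\infty$). Applied to the two given points, the hypotheses $\scrY_{\Gamma'}(z_k+rj)\geq\vartheta$ and $\scrY_{\Gamma'}(z_l+rj)\geq\vartheta$ become
\[
|z_k-\eta_k|^2+r^2\leq\frac{2\delta_k r}{\vartheta},\qquad |z_l-\eta_l|^2+r^2\leq\frac{2\delta_l r}{\vartheta}.
\]
Dropping the $|z_k-\eta_k|^2$ term in the first inequality gives $r\leq 2\delta_k/\vartheta$, while dropping $r^2$ gives $|z_k-\eta_k|^2\leq 2\delta_k r/\vartheta$; symmetrically $r\leq 2\delta_l/\vartheta$ and $|z_l-\eta_l|^2\leq 2\delta_l r/\vartheta$.

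Next I would exploit that $\scrH(\eta_k,\delta_k)$ and $\scrH(\eta_l,\delta_l)$ are two distinct horoballs of the family $H_{\Gamma'}$, hence \emph{disjoint} by Lemma \ref{scrYprops} d) (recall $H_{\Gamma'}$ consists of the horoballs making up $\{\scrY_{\Gamma'}>2\}$ and $2>C_0$). Viewed as a Euclidean ball, $\scrH(\eta,\delta)$ has centre $\eta+\tfrac{\delta}{2}j$ and radius $\tfrac{\delta}{2}$, so disjointness of the two balls forces the distance between centres to be at least the sum of the radii:
\[
|\eta_k-\eta_l|^2+\Big(\tfrac{\delta_k}{2}-\tfrac{\delta_l}{2}\Big)^2\geq\Big(\tfrac{\delta_k}{2}+\tfrac{\delta_l}{2}\Big)^2,
\]
which simplifies to $|\eta_k-\eta_l|\geq\sqrt{\delta_k\delta_l}$. (In particular $\eta_k\neq\eta_l$, as it must be for two distinct disjoint horoballs.)

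Finally I would combine these estimates by the triangle inequality,
\[
|z_k-z_l|\geq|\eta_k-\eta_l|-|z_k-\eta_k|-|z_l-\eta_l|\geq\sqrt{\delta_k\delta_l}-\sqrt{\tfrac{2\delta_k r}{\vartheta}}-\sqrt{\tfrac{2\delta_l r}{\vartheta}},
\]
and then substitute $r\leq 2\delta_l/\vartheta$ into the first square root and $r\leq 2\delta_k/\vartheta$ into the second, each of which is thereby bounded by $\tfrac{2}{\vartheta}\sqrt{\delta_k\delta_l}$. This yields $|z_k-z_l|\geq\big(1-\tfrac{4}{\vartheta}\big)\sqrt{\delta_k\delta_l}$, as required. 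I do not anticipate a real obstacle here; the only points deserving care are invoking the correct closed form of $\scrY_{\Gamma'}$ inside a horoball (already used at \eqref{zheight}) and the fact, furnished by Lemma \ref{scrYprops} d), that distinct members of $H_{\Gamma'}$ are genuinely disjoint Euclidean balls.
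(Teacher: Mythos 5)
Your proposal is correct and follows essentially the same route as the paper: disjointness of the two horoballs gives $|\eta_k-\eta_l|\geq\sqrt{\delta_k\delta_l}$ via the Euclidean centre--radius computation, and the triangle inequality together with the bounds $|z_k-\eta_k|^2\leq 2\delta_k e^{-s}/\vartheta$ and $e^{-s}\leq 2\delta_k/\vartheta$ (the paper's \eqref{zbound} and \eqref{rbound}) yields the stated estimate. Your cross-substitution of $e^{-s}\leq 2\delta_l/\vartheta$ into the $\delta_k$ term is a cosmetic variant of the paper's use of $e^{-s}\leq 2\vartheta^{-1}\min\lbrace\delta_k,\delta_l\rbrace$ and gives the identical constant.
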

\begin{proof}
Since $\scrH(\eta_k,\delta_k)$ and $\scrH(\eta_l,\delta_l)$ are disjoint, by Pythagoras' theorem we have
\begin{equation*}
 |\eta_k-\eta_l|^2\geq \left( \frac{\delta_k}{2}+\frac{\delta_l}{2}\right)^2-\left( \frac{\delta_k}{2}-\frac{\delta_l}{2}\right)^2=\delta_k\delta_l. 
\end{equation*}
Hence, by the triangle inequality,
\begin{equation*}
|z_k-z_l|\geq |\eta_k-\eta_l|-|z_k-\eta_k|-|z_l-\eta_l|\geq \sqrt{\delta_k\delta_l}-\sqrt{\frac{2\delta_ke^{-s}}{\vartheta}}-\sqrt{\frac{2\delta_le^{-s}}{\vartheta}},
\end{equation*}
where we used \eqref{zbound}. By \eqref{rbound}, $e^{-s}\leq 2\vartheta^{-1}\min\lbrace \delta_k,\delta_l\rbrace$, hence 
\begin{align*}
|z_k-z_l|\geq& \sqrt{\delta_k\delta_l}-\frac{2}{\vartheta}\sqrt{\min\lbrace \delta_k,\delta_l\rbrace}\left(\sqrt{\delta_k}+\sqrt{\delta_l}\right)\\&\geq \sqrt{\delta_k\delta_l}-\frac{4}{\vartheta}\sqrt{\delta_k\delta_l}=\left(1-\frac{4}{\vartheta}\right)\sqrt{\delta_k\delta_l}.
\end{align*}
\end{proof}
 \begin{lem}\label{scrYlem2}
Retaining the notation used in the previous lemmas, let $\delta_{\mathrm{Max}}=\max_{1\leq k\leq N} \delta_k$. Then 
 \begin{equation*}
\sum_{k=1}^N \delta_k\ll \delta_{\mathrm{Max}}+ L\big(1+\log\delta_{\mathrm{Max}}+s\big).
\end{equation*} 
 \end{lem}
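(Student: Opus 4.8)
The plan is to estimate $\sum_{k=1}^N \delta_k$ by grouping the horoballs according to the dyadic size of their diameters and bounding the number of horoballs in each dyadic class using the separation result of Lemma \ref{diffbound}. Fix $j\ge 0$ and consider the horoballs $\scrH(\eta_k,\delta_k)$ among our $N$ with $\delta_k \in (2^{-(j+1)}\delta_{\mathrm{Max}}, 2^{-j}\delta_{\mathrm{Max}}]$; call their number $N_j$. For each such $k$ pick a point $z_k \in B'$ with $z_k + e^{-s}j \in \scrH(\eta_k,\delta_k)$ and $\scrY_{\Gamma'}(z_k+e^{-s}j)\ge \vartheta$ (such a point exists by definition of the index set appearing in Lemma \ref{scrYlem1}). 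By Lemma \ref{diffbound}, any two such points satisfy $|z_k - z_l| \ge (1-\tfrac{4}{\vartheta})\sqrt{\delta_k \delta_l} \gg 2^{-j}\delta_{\mathrm{Max}}$, since $\vartheta \ge 200$ makes $1 - 4/\vartheta$ bounded below by a positive absolute constant, and both diameters lie in the same dyadic block.

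**Counting via a curve-length argument.** Since all the $z_k$ lie on the curve $\gamma(\S^1) = \partial B'$, which has total arc length $L$, and they are pairwise separated by distance $\gg 2^{-j}\delta_{\mathrm{Max}}$ in $\CC$, I want to conclude $N_j \ll 1 + L\,2^{j}/\delta_{\mathrm{Max}}$. The point $1$ accounts for the fact that even a single horoball can be hit. To get the factor of $L$ rather than (diameter of $B'$), I use the bi-Lipschitz lower bound on $\gamma$: the preimages $t_k = \gamma^{-1}(z_k) \in \S^1$ satisfy $|t_k - t_l| \le c^{-1}|z_k - z_l|^{-1}$... no — rather, from $|\gamma(t_1)-\gamma(t_2)|\ge c|t_1-t_2|$ we cannot directly bound separation of the $t_k$ from below. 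Instead I argue the other way: the $z_k$ are $\gg 2^{-j}\delta_{\mathrm{Max}}$-separated points on a rectifiable curve of length $L$, hence (walking along the curve) their number is $\ll 1 + L/(2^{-j}\delta_{\mathrm{Max}})$; this uses only that $\partial B'$ is a curve of finite length $L$ and the triangle inequality, not the bi-Lipschitz property. So $N_j \ll 1 + L\,2^{j}/\delta_{\mathrm{Max}}$.

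**Summing the dyadic blocks.** Then
\begin{equation*}
\sum_{k=1}^N \delta_k = \sum_{j\ge 0} \sum_{\delta_k \sim 2^{-j}\delta_{\mathrm{Max}}} \delta_k \le \sum_{j\ge 0} N_j \cdot 2^{-j}\delta_{\mathrm{Max}} \ll \sum_{j\ge 0}\big(2^{-j}\delta_{\mathrm{Max}} + L\big).
\end{equation*}
The first part of the sum converges to $\ll \delta_{\mathrm{Max}}$. The second part, $\sum_j L$, would diverge, so the sum over $j$ must be truncated: the horoballs in $H_{\Gamma'}$ that are hit have diameters bounded below. Indeed, from \eqref{rbound} we have $\delta_k \ge \tfrac{\vartheta}{2}e^{-s}$, and $\delta_k \le \delta_{\mathrm{Max}}$, so only $j$ with $2^{-j}\delta_{\mathrm{Max}} \gtrsim \vartheta e^{-s}$ occur, i.e.\ $j \ll \log(\delta_{\mathrm{Max}} e^{s}/\vartheta) \ll \log \delta_{\mathrm{Max}} + s + 1$ (using $\vartheta \gg 1$). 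Hence the second part contributes $\ll L(1 + \log\delta_{\mathrm{Max}} + s)$, and combining gives the claimed bound $\sum_{k=1}^N \delta_k \ll \delta_{\mathrm{Max}} + L(1+\log\delta_{\mathrm{Max}} + s)$.

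**Main obstacle.** The delicate point is the counting step $N_j \ll 1 + L\,2^{j}/\delta_{\mathrm{Max}}$: one must argue cleanly that a set of $\rho$-separated points lying on a rectifiable curve of length $L$ has cardinality $\ll 1 + L/\rho$. This is intuitively clear (order the points by their position along the curve; consecutive points are at curve-distance $\ge$ their chordal distance $\ge \rho$, so there can be at most $L/\rho + 1$ of them), but writing it carefully requires parametrizing $\partial B'$ by arc length and invoking that chordal distance is at most arc-length distance. The bi-Lipschitz hypothesis on $\gamma$ in Proposition \ref{scrYIntlem} is what guarantees $\partial B'$ is genuinely rectifiable with the stated length $L$, so it enters here (and was already used crucially in Lemma \ref{scrYlem1}). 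Everything else is bookkeeping with \eqref{rbound}, \eqref{zbound}, and Lemma \ref{diffbound}.
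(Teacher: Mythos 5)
Your proposal is correct and follows essentially the same route as the paper: dyadic decomposition of the horoballs by diameter, the separation bound from Lemma \ref{diffbound}, counting separated points on $\partial B'$ by ordering them along the curve and comparing the sum of consecutive chord lengths to $L$, and truncating the dyadic sum via \eqref{rbound}. Your observation that only rectifiability (not the bi-Lipschitz lower bound) is needed for the counting step matches what the paper actually does.
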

 Note that while $\log\delta_{\mathrm{Max}}$ may be negative, $\log\delta_{\mathrm{Max}}+s$ is always positive; in fact, from  \eqref{rbound}, $\log\delta_{k}+s>0$ for all $k$. 
 \begin{proof}
 For $m\in\NN$, we define the following sets
 \begin{equation*}
 \scrS_m:=\lbrace k\in\lbrace1,2,\dots N\rbrace\;:\; \delta_k\in (2^{-(m+1)}\delta_{\mathrm{Max}}, 2^{-m}\delta_{\mathrm{Max}}]\rbrace.
 \end{equation*}
Let $M$ be such that $2^M=\frac{2\delta_{\mathrm{Max}}}{\vartheta e^{-s}}$. For $m>M$ and $k\in\scrS_m$, we have 
\begin{equation*}
 \delta_k \leq \delta_{\mathrm{Max}}2^{-m}<\delta_{\mathrm{Max}}2^{-M}=\frac{\vartheta e^{-s}}{2}.
 \end{equation*}
 This gives
 \begin{equation*}
 \frac{2\delta_k}{\vartheta}<e^{-s},
 \end{equation*}
 contradicting \eqref{rbound}! Thus for all $m>M$, $\scrS_m=\emptyset$. 
 
 We now bound the sum of the diameters by
 \begin{equation*}
 \sum_{k=1}^N \delta_k=\sum_{0\leq m \leq M}\; \sum_{l\in\scrS_m} \delta_l \leq \!\!\sum_{0\leq m \leq M}  2^{-m}\delta_{\mathrm{Max}}|\scrS_m|.
 \end{equation*}
For each $l\in\scrS_m$ we choose an element $z_l\in\gamma(\scrI)$ such that $z_l+e^{-s}j\in\scrH(\eta_l,\delta_l)$. We may 
 assume that $\scrS_m=\lbrace l_1,l_2,\dots,l_{N_m}\rbrace$, with the numbering chosen so that the points $z_{l_1},z_{l_2}\ldots z_{l_{N_m}}$ lie in this order along the curve $\partial B'$. Assuming $N_m\geq 2$, and using Lemma \ref{diffbound} to get a lower  bound on $|z_{l_p}-z_{l_{p+1}}|$, gives
 \begin{align*}
 L\geq |z_{l_{N_m}}-z_{l_{1}}|+\sum_{p=1}^{N_m-1} |z_{l_p}-z_{l_{p+1}}|\geq \left(1-\frac{4}{\vartheta}\right) \sqrt{\delta_{l_{N_m}}\delta_{l_1}}+\sum_{p=1}^{N_m-1}\left(1-\frac{4}{\vartheta}\right)\sqrt{\delta_{l_p}\delta_{l_{p+1}}}\\\notag\geq \left(1-\frac{4}{\vartheta}\right)N_m 2^{-(m+1)}\delta_{\mathrm{Max}}.
 \end{align*}
 Thus
 \begin{align*}
|\scrS_m|= N_m \leq& \max\left\lbrace 1, 2^{m+1}\left(1-\frac{4}{\vartheta}\right)^{-1} \frac{L}{\,\delta_{\mathrm{Max}}}\right\rbrace\\&\leq1+2^{m+1}\left(1-\frac{4}{\vartheta}\right)^{-1} \frac{L}{\,\delta_{\mathrm{Max}}},
 \end{align*}
 giving
 \begin{equation*}
 \sum_{k=1}^N \delta_k\leq \sum_{0\leq m \leq M}  2^{-m}\delta_{\mathrm{Max}}|\scrS_m| \leq \delta_{\mathrm{Max}}+2\left(1-\frac{4}{\vartheta}\right)^{-1}L\left(1+M\right).
 \end{equation*}
Using $\vartheta\geq 200$ and the definition of $M$ gives
\begin{equation*}
\sum_{k=1}^N \delta_k\leq \delta_{\mathrm{Max}}+ \frac{100\,L}{49\,\log 2}\big(2\log 2+\log\delta_{\mathrm{Max}}+s\big).
\end{equation*}
\end{proof}
\begin{flushleft}
\textbf{Proof of Proposition \ref{scrYIntlem}.}
 By applying Lemmas \ref{scrYlem1} and \ref{scrYlem2}, we get \end{flushleft}
 \begin{equation}\label{prop6pfeq1}
 \int_{\partial B'} \scrY_{\Gamma'}(n_za_{-s})|dz|\ll L(\vartheta+\delta_{\mathrm{Max}})+\frac{L^2}{c}\left( 1+\log \delta_{\mathrm{Max}}+s\right),
 \end{equation}
 which reduces the problem to bounding $\delta_{\mathrm{Max}}$. Let $\eta$ be one of the $\eta_k$s associated to a horoball with diameter $\delta_{\mathrm{Max}}$. We consider two cases:
 
\textit{Case 1:} $\exists\, z\in B'$ s.t. $z+j\not\in\scrH(\eta,\delta_{\mathrm{Max}})$.\\
Let $\widetilde{z}\in \partial B'$ be such that $\widetilde{z}+e^{-s}j\in \scrH(\eta,\delta_{\mathrm{Max}})$ and $\scrY_{\Gamma'}(\widetilde{z}+e^{-s}j)\geq\vartheta$. Then $|z-\eta|^2+1\geq \delta_{\mathrm{Max}}$ and $|\widetilde{z}-\eta|^2+e^{-2s}<\frac{2\delta_{\mathrm{Max}}}{\vartheta}e^{-s}$. Also, $|z-\widetilde{z}|\leq R$. Hence \begin{align*}
 \delta_{\mathrm{Max}}&\leq 1+|z-\eta|^2 \leq 1+\left(|z-\widetilde{z}|+|\widetilde{z}-\eta|\right)^2\\&\leq 1+\Big( R+\Big(\frac{2\delta_{\mathrm{Max}}}{\vartheta}\Big)^{1/2}\Big)^2\leq 1+2R^2+\frac{4\delta_{\mathrm{Max}}}{\vartheta}\leq 1+2R^2+\frac{\delta_{\mathrm{Max}}}{50},
 \end{align*}
forcing
 \begin{equation*}
 \delta_{\mathrm{Max}}\leq\frac{50}{49} (1+2R^2).
 \end{equation*}

\textit{Case 2:} $B'+j\subset \scrH(\eta,\delta_{\mathrm{Max}})$.\\
In this case, $\smatr 1 0 0 1 \cdot j=0+j\in\scrH(\eta,\delta_{\mathrm{Max}}),$
so
\begin{equation}\label{Yj}
Y=\frac{2\delta_{\mathrm{Max}}}{|\eta|^2+1}\geq 2,
\end{equation}
and $\vartheta=100\,Y$.
We let $\widetilde{z}$ be as in Case 1, so, as before, $|\widetilde{z}-\eta|^2\leq \frac{2\delta_{\mathrm{Max}}}{\vartheta}$.
Now,
\begin{equation}\label{etabd2}
|\eta|^2\leq \left( |\widetilde{z}|+|\widetilde{z}-\eta|\right)^2\leq  \Big(R+\sqrt{\frac{2\delta_{\mathrm{Max}}}{\vartheta}}\,\Big)^2\leq 2R^2+\frac{\delta_{\mathrm{Max}}}{25\,Y}.
\end{equation}
Hence, by \eqref{Yj} and \eqref{etabd2}:
\begin{equation*}
Y\geq \frac{\delta_{\mathrm{Max}}}{1+2R^2+\frac{\delta_{\mathrm{Max}}}{25\,Y}},
\end{equation*}
giving
\begin{equation*}
\delta_{\mathrm{Max}}\leq\frac{25}{24}(1+2R^2)Y.
\end{equation*}
We then have that in both Case 1 and Case 2,
\begin{equation}\label{deltamxbd}
\delta_{\mathrm{Max}}\ll (1+2R^2)\left(1+Y \right).
\end{equation}
Entering \eqref{deltamxbd} into \eqref{prop6pfeq1} and using $\vartheta\ll 1+ Y$ gives
\begin{equation*}
 \int_{\partial B'} \scrY_{\Gamma'}(n_za_{-s})|dz|\ll L(1+R^2)(1+Y)+\frac{L^2}{c}\left( 1+\log \left((1+2R^2)\left(1+Y \right)\right)+s\right),
\end{equation*}
which proves the proposition.

\hspace*{424.6pt}\qedsymbol

Proposition \ref{scrYIntlem} will be applied for lattices $\Gamma'=g^{-1}\Gamma g$. The Sobolev inequalities of the next section will also require us to consider integrals along $\partial B'$ of $\scrY_{\Gamma}$ raised to the power of some number between zero and one. We deal with both of these issues in the following corollary: 
\begin{cor}\label{scrYcor} For all $g\in G$ and $\alpha\in [0,1]$, we have
\begin{align*}
\int_{\partial B'} \scrY_{\Gamma}(gn_za_{-s})^{\alpha}|dz|\ll_{\Gamma,\alpha}
\frac{L^2(1+R^2)}{c}\left(s+Y^{\alpha}\right),
\end{align*}
where now $Y=\scrY_{\Gamma}(g)$.
\end{cor}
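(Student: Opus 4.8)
The plan is to reduce the corollary to Proposition~\ref{scrYIntlem} by conjugating $\Gamma$, establish the case $\alpha=1$ in the clean form of the corollary \emph{first}, and only then pass to general $\alpha$ via Jensen's inequality; the order of these last two steps matters, since distributing an $\alpha$-th power directly over the terms produced by Proposition~\ref{scrYIntlem} leaves a stray factor of $R$ that cannot be absorbed into $s+Y^{\alpha}$. Two elementary facts will be used throughout. First, the constant-speed bi-Lipschitz hypotheses on $\gamma$ force $c\le L$: the chord is bounded by the arc, $|\gamma(t_1)-\gamma(t_2)|\le L|t_1-t_2|$, and comparison with condition (ii) of Proposition~\ref{scrYIntlem} gives $c\le L$. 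Second, $\scrY_{\Gamma}\gg_{\Gamma}1$ on all of $G$ --- a standard property of the invariant height function (its sublevel sets in $\GaG$ are compact, cf.\ Lemma~\ref{scrYprops}~d)); in particular $Y^{\alpha}=\scrY_{\Gamma}(g)^{\alpha}\gg_{\Gamma}1$, uniformly for $\alpha\in[0,1]$.

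By Lemma~\ref{scrYprops}~a) with $g_0=g$, one has $\scrY_{\Gamma}(gn_za_{-s})=\scrY_{g^{-1}\Gamma g}(n_za_{-s})$ for all $z,s$, and $Y=\scrY_{\Gamma}(g)=\scrY_{g^{-1}\Gamma g}\smatr 1 0 0 1$. Since $g^{-1}\Gamma g$ is again a lattice in $G$ and $B'$ is as in Proposition~\ref{scrYIntlem}, applying that proposition to $\Gamma'=g^{-1}\Gamma g$ gives, for $s\ge0$,
\begin{equation*}
\int_{\partial B'}\scrY_{\Gamma}(gn_za_{-s})\,|dz|\ll L(1+R^2)(1+Y)+\frac{L^2}{c}\Bigl(1+\log\bigl((1+R)(1+Y)\bigr)+s\Bigr).
\end{equation*}
Using $c\le L$ to absorb the leading $L$ into $L^2/c$, then $\log\bigl((1+R)(1+Y)\bigr)\le R+Y$, and finally $1\ll_{\Gamma}Y$ together with $R\le1+R^2$ (the spare factor $1+R^2$ on the target side being exactly what swallows the $R$-term), the right-hand side collapses to $\ll_{\Gamma}\frac{L^2(1+R^2)}{c}(s+Y)$. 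This is the asserted bound when $\alpha=1$.

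For general $\alpha\in[0,1]$ I would apply Jensen's inequality to the concave function $t\mapsto t^{\alpha}$ and the probability measure $L^{-1}|dz|$ on $\partial B'$:
\begin{equation*}
\int_{\partial B'}\scrY_{\Gamma}(gn_za_{-s})^{\alpha}\,|dz|\le L^{1-\alpha}\Bigl(\int_{\partial B'}\scrY_{\Gamma}(gn_za_{-s})\,|dz|\Bigr)^{\alpha}\ll_{\Gamma}L^{1-\alpha}\Bigl(\tfrac{L^2(1+R^2)}{c}(s+Y)\Bigr)^{\alpha}.
\end{equation*}
It then remains to note that $L^{1-\alpha}\bigl(\tfrac{L^2(1+R^2)}{c}\bigr)^{\alpha}=\tfrac{L^2(1+R^2)}{c}\bigl(\tfrac{c}{L(1+R^2)}\bigr)^{1-\alpha}\le\tfrac{L^2(1+R^2)}{c}$, since $c\le L\le L(1+R^2)$, and that $(s+Y)^{\alpha}\le s^{\alpha}+Y^{\alpha}\le(1+s)+Y^{\alpha}\ll_{\Gamma}s+Y^{\alpha}$, by subadditivity of $t\mapsto t^{\alpha}$ and $1\ll_{\Gamma}Y^{\alpha}$. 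Combining these yields the claim (the degenerate case $\alpha=0$ is included, reading simply $\int_{\partial B'}1\,|dz|=L\le\tfrac{L^2(1+R^2)}{c}\le\tfrac{L^2(1+R^2)}{c}(s+1)$, again via $c\le L(1+R^2)$).

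I do not anticipate a genuine obstacle here. The only part carrying real content, as opposed to being routine, is the bookkeeping that folds Proposition~\ref{scrYIntlem}'s bound --- with its residual $1$, $\log(\cdots)$ and $s$ terms --- together with the surplus powers of $L$, $R$, $c$ coming out of Jensen, into the single term $\tfrac{L^2(1+R^2)}{c}(s+Y^{\alpha})$. This is precisely where $c\le L$ and $\scrY_{\Gamma}\gg_{\Gamma}1$ enter, and is the reason the $\alpha=1$ case must be reduced to the clean form before invoking Jensen.
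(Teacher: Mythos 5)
Your proof is correct and follows essentially the same route as the paper's: reduce to $\Gamma'=g^{-1}\Gamma g$ via Lemma \ref{scrYprops} a), invoke Proposition \ref{scrYIntlem}, and interpolate with Jensen's inequality using $c\le L$ and $1\ll_{\Gamma}\scrY_{\Gamma}$. The only (cosmetic) difference is the order of operations --- the paper applies Jensen \emph{before} cleaning up the right-hand side, and the ``stray $R$'' you worry about is in fact harmlessly absorbed by the available factor $1+R^2$ together with $s+Y^{\alpha}\gg_{\Gamma}1$, so the ordering you insist on is not actually necessary.
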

\begin{proof}
We use the bound on $\gamma'$, Lemma \ref{scrYprops} a), and Jensen's inequality to get
\begin{equation*}
\int_{\partial B'} \scrY_{\Gamma}(gn_za_{-s})^{\alpha}|dz|\leq L\int_{\S^1}\scrY_{g^{-1}\Gamma g}(\gamma(t)+e^{-s}j)^{\alpha}dt\leq L\left( \int_{\S^1}\scrY_{g^{-1}\Gamma g}(\gamma(t)+e^{-s}j)dt \right)^{\alpha}.
\end{equation*}
By studying the proof of Proposition \ref{scrYIntlem}, we see that
\begin{equation*}
\left(\int_{\S^1}\scrY_{g^{-1}\Gamma g}(\gamma(t)+e^{-s}j)dt\right)^{\alpha}\!\ll\! \left(\!\!(1+R^2)(1+Y')+\frac{L}{c}\left( 1+\log \left((1+R)\left(1+Y' \right)\right)+s\right)\!\!\right)^{\alpha},
\end{equation*}
where $Y'=\scrY_{g^{-1}\Gamma g}\smatr 1 0 0 1$. Again by Lemma \ref{scrYprops} a), $Y'=Y$. We now use the fact that $1\ll_{\Gamma} Y$, and $L\geq 2|\gamma(\frac{1}{2})-\gamma(0)|\geq c$, to get 
\begin{align*}
\ll_{\Gamma,\alpha} \frac{L^2}{c}(1+R^2)Y^{\alpha}+\frac{L^2}{c}\left( 1+\log \left((1+R)\left(1+Y \right)\right)+s\right).
\end{align*}
This, after noting that $\frac{L^2}{c}\left( 1+\log \left((1+R)\left(1+Y \right)\right)\right)\ll_{\Gamma,\alpha}  \frac{L^2}{c}(1+R^2)Y^{\alpha}$, concludes the proof.
\end{proof}

\begin{remark}
Note that any bi-Lipschitz mapping of $\S^1$ to the boundary of some set $B'\subset \CC$ may be reparametrized to satisfy the conditions of Proposition \ref{scrYIntlem}; this allows us to use Corollary \ref{scrYcor}  in the proof of Theorem \ref{mainthm}.
\end{remark}
\section{Decomposition of $L^2(\GaG)$ and Sobolev inequalities}\label{decomsob}
In this section we turn our attention to $L^2(\GaG)$; the main goal is to prove pointwise bounds for functions in an appropriate Sobolev space $W^m(\GaG)$. Unlike the previous section, we allow for $\Gamma$ to be cocompact (as well as non-cocompact). If $\Gamma$ is a non-cocompact lattice in $G$, functions in $W^m(\GaG)$ will generally not be bounded; they can grow in the cusps of $\GaG$. The rate of growth will be expressed in terms of the invariant height function of Section \ref{HgtSec}. This rate will also depend on spectral properties of the given function. For this reason we start with a discussion of the decomposition of $L^2(\GaG)$ into irreducible representations. 

\subsection{\hspace*{-2pt}Decomposition of $L^2(\GaG)$}\label{L2decomp}\hspace*{-3.8pt} We now study the \emph{right-regular representation} $(\rho,L^2(\GaG))$ of $G$ on $L^2(\GaG)$. Recall that $\rho$ is the right-translation operator: for all $f\in L^2(\GaG)$, $g\in G$ and $p\in \GaG$, $\big(\rho(g)f\big)(p)=f(pg)$.

We also recall that the Lie algebra acts on $\CC$-valued functions on $\GaG$ as differential operators; for $p\in \GaG$, $X\in\mathfrak{g}_0$, and $f:\GaG\rightarrow\CC$, let
\begin{equation*}
Xf(p)=\left.\frac{d}{dt}\right|_{t=0}f\big(p\exp(tX)\big).
\end{equation*}
By allowing compositions and linear combinations over $\CC$, we get an action of $\scrU(\fg)$. Define $W^m(\GaG)$ to be the space of functions in $C^m(\GaG)$ such that $Uf\in L^2(\GaG)$ for all $U\in \scrU^m(\mathfrak{g})$. Analogously to \eqref{sobnorm}, for a fixed basis of $\fg$, we define a norm $\|\cdot\|_{W^m}$ on $W^m(\GaG)$ by $\|f\|^2=\sum_U \|Uf\|^2$, the sum being over all $U$ that are monomials in the basis elements of degree not greater than $m$. We note that for $f\in L^2(\GaG)^{\infty}$ and $U\in\scrU^m(\mathfrak{g})$, we have $Uf=d\rho(U)f$, so for such $f$ we have $\|f\|_{W^m}=\|f\|_{W^m(L^2(\GaG))}$ ($\|\cdot\|_{W^m(L^2(\GaG))}$ denotes the norm on $L^2(\GaG)^{\infty}$ defined in \eqref{sobnorm}). Moreover, $L^2(\GaG)^{\infty}$ is dense in $W^m(\GaG)$ with respect to $\|\cdot\|_{W^m}$.

Recall that we may decompose $L^2(\GaG)$ as
\begin{equation*}
L^2(\GaG)=L^2(\GaG)_{\mathit{disc}}\oplus L^2(\GaG)_{\mathit{cont}},
\end{equation*}
where $L^2(\GaG)_{\mathit{disc}}$ decomposes as a direct \emph{sum} of irreducible unitary representations, and $L^2(\GaG)_{\mathit{cont}}$ decomposes as a direct \emph{integral} of irreducible  unitary representations, with the measure associated to the integral having no atoms. We now let $L^2(\GaG)_{\mathit{cusp}}$ denote the closed $G$-invariant subspace of $L^2(\GaG)$ consisting of the \emph{cuspidal} functions. A well-known result of Gelfand and Piatetski-Shapiro (see, for example, \cite[Theorem 2]{HC}) gives that $L^2(\GaG)_{\mathit{cusp}}$ decomposes into a direct sum of irreducible unitary representations of $G$, each with finite multiplicity; $L^2(\GaG)_{\mathit{cusp}}$ is therefore contained within $L^2(\GaG)_{\mathit{disc}}$, and we write
$L^2(\GaG)_{\mathit{disc}}$ as the orthogonal sum
\begin{equation*}
L^2(\GaG)_{\mathit{disc}}=L^2(\GaG)_{\mathit{cusp}}\oplus L^2(\GaG)_{\mathit{res}},
\end{equation*}  
i.e. we let $L^2(\GaG)_{\mathit{res}}$ be the orthogonal complement of $L^2(\GaG)_{\mathit{cusp}}$ in $L^2(\GaG)_{\mathit{disc}}$. 

We will need some well-known facts regarding the connection between the spectral theory of the Laplace-Beltrami operator $\Delta$ on the hyperbolic $3$-orbifold $\scrM=\GaG/K$ and the decomposition of $L^2(\GaG)$. Let $0=\lambda_0<\lambda_1\leq\lambda_2\leq\ldots\leq\lambda_M<1$ be the eigenvalues of $-\Delta$ (acting on $L^2(\scrM)$) in $(0,1)$, counted with multiplicity (cf. eg. \cite[Chapter 6]{EGM}). These small eigenvalues may be parametrized thus: for $\lambda_m\in(0,1)$, let $\lambda_m=s_m(2-s_m)$, where $s_m\in (1,2)$. For each $s_m$, $m\in \lbrace 1,2\ldots M\rbrace$, there exists a subrepresentation $\scrC^{s_m}$ of $(\rho,L^2(\GaG))$, and $\scrC^{s_m}$ is isomorphic to $\scrP^{(0,2s_m-2)}$. Moreover, any subrepresentation of $(\rho,L^2(\GaG))$ that is isomorphic to a complementary series representation is contained in $\bigoplus_{m=1}^M\scrC^{s_m}$. This is seen by noting that a $K$-invariant vector in a subrepresentation of $(\rho,L^2(\GaG))$ that is isomorphic to a complementary series representation $\scrP^{(0,\nu)}$ may be viewed as an eigenfunction to $-\Delta$ in $L^2(\scrM)$, with eigenvalue $1-\frac{\nu^2}{4}$ (since $\Omega_1$ acts as $\Delta$ on the $K$-invariant vectors in $L^2(\GaG)^{\infty}$). Letting $\varphi_0$ be the constant function $\varphi_0\equiv 1$ on $\GaG$ gives the following decomposition of $ L^2(\GaG)_{\mathit{disc}}$:
\begin{equation*}
(\rho,L^2(\GaG)_{\mathit{disc}})=(\rho,\CC\varphi_0)\oplus\scrC^{s_1}\oplus \scrC^{s_2}\oplus\ldots \scrC^{s_M}\oplus \left(\rho,L^2(\GaG)_{\mathit{disc,temp}}\right),
\end{equation*}
where $\left(\rho,L^2(\GaG)_{\mathit{disc,temp}}\right)$ is isomorphic to a direct sum of principal series representations.  

Finally, we recall that if $\Gamma$ is cocompact, then $L^2(\GaG)_{cont}$ is zero. On the other hand, if $\Gamma$ is non-cocompact, then the irreducible unitary representations occurring in the direct integral decomposition of $L^2(\GaG)_{cont}$ are all \emph{tempered}. This may be seen by the previous identification of $K$-invariant vectors of $L^2(\GaG)$ with elements of $L^2(\scrM)$, and the fact that the continuous spectrum of $-\Delta$ in $L^2(\scrM)$ is the interval $[1,\infty)$ (cf. \cite[Chapter 6]{EGM}).
\subsection{Sobolev Estimates}  \label{Sobbds}  We now prove pointwise bounds for functions in some of the subspaces discussed in the previous section. For notational convenience, we first make the following definition:
\begin{defn}\label{compscrYdef}
For cocompact $\Gamma$, let
\begin{equation*}
\scrY_{\Gamma}(p)=1 \qquad \forall p\in \GaG.
\end{equation*}
\end{defn}

Note that by this definition, Lemma \ref{scrYprops} and Corollary \ref{scrYcor} trivially hold even for cocompact $\Gamma$. By combining Lemma \ref{scrYprops} e) and \cite[Prop. B.2]{Bern}, we get:
\begin{lem}\label{lem2} Let $p\in\GaG$ and $f\in W^4(\GaG)$. Then
\begin{equation*}
|f(p)|\ll_{\Gamma}\|f\|_{W^4}\scrY_{\Gamma}(p).
\end{equation*} 
\end{lem}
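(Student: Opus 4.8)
The plan is to combine a local Sobolev embedding on the Lie group $G$ with the lattice-point count of Lemma \ref{scrYprops} e): the embedding bounds $|f(p)|$ by an $L^2$-integral of derivatives of (a lift of) $f$ over a small ball $g\scrC$ in $G$, and the count controls how many times that ball wraps around the quotient, producing the factor $\scrY_\Gamma(p)^2$.

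First I would fix, once and for all, a compact neighbourhood $\scrC$ of the identity in $G$. Since $\dim G = 6$, Sobolev embedding gives a local inclusion $W^4\hookrightarrow C^0$; concretely \cite[Prop. B.2]{Bern} supplies an absolute constant so that for every $g\in G$ and every sufficiently regular $F$ on $G$,
\[
|F(g)|^2 \ll \sum_{U} \int_{g\scrC} |UF|^2 \, d\mu_G,
\]
the sum being over monomials $U$ in the fixed basis of $\fg$ of degree $\le 4$, and $UF$ denoting the differential-operator action as in Section \ref{L2decomp}. Next, given $p\in\GaG$ and $f\in W^4(\GaG)$ (so $f\in C^4$ by definition of $W^4(\GaG)$), pick $g\in G$ with $\Gamma g=p$ and let $\tilde f$ be the $\Gamma$-left-invariant lift of $f$; then $\tilde f(g)=f(p)$, and each $U\tilde f$ is again $\Gamma$-left-invariant and descends to $Uf$ on $\GaG$. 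Applying the displayed inequality to $\tilde f$, it suffices to bound $\int_{g\scrC}|U\tilde f|^2\,d\mu_G$ for each $U$.

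For this, write $G=\bigsqcup_{\gamma\in\Gamma}\gamma\scrF$ with $\scrF$ a fundamental domain for $\GaG$ (so $\mu_G(\scrF)=1$ by our normalisation). Using left-$\Gamma$-invariance of $|U\tilde f|^2$ and of $\mu_G$,
\[
\int_{g\scrC} |U\tilde f|^2 \, d\mu_G = \int_{\scrF} |\Gamma x \cap g\scrC|\, |U\tilde f(x)|^2 \, d\mu_G(x) \le \Big(\sup_{h\in G} |\Gamma h \cap g\scrC|\Big) \int_{\GaG} |Uf|^2 \, d\mu .
\]
By Lemma \ref{scrYprops} e), $\sup_{h\in G}|\Gamma h\cap g\scrC|\ll_{\Gamma,\scrC}\scrY_\Gamma(g)^2=\scrY_\Gamma(p)^2$. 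Summing over the finitely many $U$ of degree $\le 4$ and combining with the Sobolev inequality yields $|f(p)|^2\ll_\Gamma \scrY_\Gamma(p)^2\,\|f\|_{W^4}^2$; taking square roots gives the claim. If one prefers to apply \cite[Prop. B.2]{Bern} only to smooth functions, first run the argument for $f$ in the dense subspace $L^2(\GaG)^\infty$ and pass to the limit, using that the same local inequality forces a $W^4$-convergent sequence to converge uniformly on compacta.

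The only step that is not pure bookkeeping is the "unfolding with multiplicity": recognising that integrating over the ball $g\scrC\subset G$ rather than over a fundamental domain overcounts by exactly the factor $\sup_{h}|\Gamma h\cap g\scrC|$, which is where Lemma \ref{scrYprops} e) enters and supplies the $\scrY_\Gamma(p)^2$. Everything else is a direct application of \cite[Prop. B.2]{Bern}.
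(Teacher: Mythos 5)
Your argument is correct and is exactly the combination the paper has in mind: its one-line proof simply cites Lemma \ref{scrYprops} e) together with \cite[Prop. B.2]{Bern}, and your unfolding-with-multiplicity computation is the standard way those two ingredients fit together (the local $W^4\hookrightarrow C^0$ embedding is valid since $4>\dim G/2=3$). No gaps; the density remark at the end properly handles the regularity issue.
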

For $f\in W^5(\GaG)\cap L^2(\GaG)_{\mathit{cusp}}$, we have the following uniform bound: 
\begin{lem}\label{cusplem1} For cuspidal functions in $W^5(\GaG)$, we have
\begin{equation*}
|f(p)|\ll_{\Gamma} \|f\|_{W^5}.
\end{equation*}
\end{lem}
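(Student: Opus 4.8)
The goal is a uniform sup-norm bound $|f(p)| \ll_\Gamma \|f\|_{W^5}$ for cuspidal $f \in W^5(\GaG)$, for non-cocompact $\Gamma$ (the cocompact case being already covered by Lemma \ref{lem2} together with Definition \ref{compscrYdef}). The idea is to exploit the defining property of cuspidality: the constant term of $f$ along each cusp vanishes. Concretely, fix a cusp $\eta_i$ with $h_i \in \NN_{\eta_i}^{(\Gamma)}$, so that the unipotent stabilizer $h_i \Gamma_{\eta_i} h_i^{-1} \cap N$ corresponds to a unimodular lattice $\Lambda_i \subset \CC$. For a point $p$ lying deep in this cusp, write $p = \Gamma g$ with $h_i g = n_z a_t k$ and $e^t = \scrY_\Gamma(p)$ large; then cuspidality gives $\int_{\CC/\Lambda_i} f(\Gamma h_i^{-1} n_w h_i g)\, dm(w) = 0$, i.e. the average of the relevant $N$-translate of $f$ over the compact horocycle vanishes. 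By the mean value theorem (in the $N$-direction), $|f(p)|$ is controlled by an integral over $\CC/\Lambda_i$ of an $N$-derivative of $f$, evaluated at height $e^t$. The key gain is that conjugating an $N$-derivative back by $a_t$ produces a factor $e^{-t} = \scrY_\Gamma(p)^{-1}$ (since $a_{-t} n_z a_t = n_{e^{-t} z}$), which exactly cancels the linear growth $\scrY_\Gamma(p)$ coming from Lemma \ref{lem2}. One $N$-derivative costs one extra Sobolev degree, hence $W^5$ rather than $W^4$.

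**Key steps, in order.** First, reduce to the case where $p$ lies in a fixed cusp neighborhood: outside $\{\scrY_\Gamma > C\}$ for a suitable $C$ depending on $\Gamma$, the estimate $\scrY_\Gamma(p) \ll_\Gamma 1$ holds and Lemma \ref{lem2} already gives $|f(p)| \ll_\Gamma \|f\|_{W^4} \ll \|f\|_{W^5}$. Second, after a $\Gamma$-shift and conjugation we may assume $p = \Gamma g$ with $g = n_z a_t k$, $e^t = \scrY_\Gamma(p) > C$, and $\Gamma \cap N = \{n_w : w \in \Lambda\}$ with $\Lambda$ unimodular. Third, define $F(w) := f(\Gamma n_w g)$ for $w \in \CC$; this descends to $\CC/\Lambda$ and cuspidality gives $\int_{\CC/\Lambda} F(w)\, dm(w) = 0$, so there is $w_0$ with $F(w_0) = 0$, whence $|f(p)| = |F(0)| \le \int_0^1 |\tfrac{d}{ds} F(s w_0)|\, ds$ (using $|w_0| \le$ diam of a fundamental domain of $\Lambda$, which is $\ll 1$ since $\Lambda$ is unimodular — cf. the argument in the proof of Lemma \ref{scrYprops} d)). Fourth, express $\tfrac{d}{ds} F(s w_0)$ in terms of $d\rho(E_+), d\rho(K_+)$ acting on $f$ and conjugate through $a_t$: schematically $\tfrac{d}{ds} f(\Gamma n_{sw_0} n_z a_t k) = e^{-t} (Y f)(\Gamma n_{sw_0} n_z a_t k)$ for a first-order operator $Y$ in $\{E_+, K_+\}$ (the factor $e^{-t}$ from $a_{-t} N a_t$). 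Fifth, apply Lemma \ref{lem2} to $Yf \in W^4(\GaG)$: $|(Yf)(q)| \ll_\Gamma \|Yf\|_{W^4} \scrY_\Gamma(q) \le \|f\|_{W^5}\, \scrY_\Gamma(q)$. Sixth, bound $\scrY_\Gamma$ at the relevant points $q = \Gamma n_{sw_0} n_z a_t k = \Gamma n_{sw_0 + z} a_t k$: by Lemma \ref{scrYprops} b),c) and since $|sw_0|, |z| \ll 1$ in the relevant fundamental-domain picture (and $t \ge 0$), $\scrY_\Gamma(q) \ll \scrY_\Gamma(p)$. Combining, $|f(p)| \ll_\Gamma e^{-t} \|f\|_{W^5} \scrY_\Gamma(p) = e^{-t}\|f\|_{W^5} e^t = \|f\|_{W^5}$, as desired.

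**The main obstacle.** The delicate point is the bookkeeping in step six: one must check that all the points $q$ appearing in the integral genuinely have $\scrY_\Gamma(q)$ comparable to $\scrY_\Gamma(p)$, and in particular that the $z$-coordinate (the "$n_z$" part of the Iwasawa decomposition of $h_i g$) can be taken bounded. This requires choosing the representative $g$ within its $\Gamma$-orbit so that $h_i g \cdot j = z + \scrY_\Gamma(p) j$ with $z$ lying in a fixed fundamental domain for $\Lambda$ acting on $\CC$ — exactly the normalization used in the proof of Lemma \ref{scrYprops} e). One also needs that $C$ can be chosen (depending only on $\Gamma$) large enough that $\scrY_\Gamma(p) > C$ forces $p$ to lie in a single, well-understood cusp horoball, which is guaranteed by Lemma \ref{scrYprops} d). The rest — the mean value argument, the conjugation identities for $a_{-t} n_z a_t$, and invoking Lemma \ref{lem2} — is routine. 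A secondary subtlety is that $Y$ is a real-linear combination of $d\rho(E_+)$ and $d\rho(K_+)$ depending on $w_0$, but since $|w_0| \ll 1$ the coefficients are bounded and $\|Yf\|_{W^4} \ll \|f\|_{W^5}$ uniformly.
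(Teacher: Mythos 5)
Your proof is correct and follows essentially the same route as the paper's: the vanishing constant term yields a point on the compact horocycle through $p$ where $f$ vanishes, and the factor $e^{-t}$ gained by conjugating the $N$-derivative through $a_t$ exactly cancels the $\scrY_{\Gamma}(p)$ growth supplied by Lemma \ref{lem2}. Two small points to tidy up: the existence of $w_0$ with $F(w_0)=0$ requires $F$ to be real-valued, so one should first split $f$ into real and imaginary parts (as the paper does), and the first-order operator you differentiate with is really $\mathrm{Ad}_{k^{-1}}$ applied to a combination of $E_+$ and $K_+$ rather than those elements themselves --- harmless, since $K$ is compact and the resulting $W^4$-norms are uniformly $\ll \|f\|_{W^5}$.
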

This follows from \cite[Lemma B.3]{Bern}; we write out the proof as a preparation for Lemma \ref{reslem1} below.
\begin{proof}
Assume that $f\in W^{5}(\GaG)\cap L^2(\GaG)_{\mathit{cusp}}$ is $\RR$-valued (for $\CC$-valued functions, we may carry out the same arguments for the real and imaginary parts). Without loss of generality, we may assume that $\Gamma$ has a cusp at infinity, normalized so that the lattice $\Lambda=\lbrace z\in\CC\,:\,n_z\in\Gamma\rbrace$ is unimodular, and $p=\Gamma g$, where $g=na_tk_0$, is in the cuspidal region around $\infty$, i.e. $\scrY_{\Gamma}(p)=e^{t}>2$ (also, $\forall n'\in N$, $\scrY_{\Gamma}(n'g)=\scrY_{\Gamma}(g)$). Since $f$ is cuspidal, and in $C^5(\GaG)$, we have
\begin{equation*}
\int_{(\Gamma\cap N)\backslash N} f(ng)\,d\mu_N(n)=0.
\end{equation*} 
Since $\Lambda$ is unimodular, there exists a fundamental parallelogram $\scrF\subset\CC$, $m(\scrF)=1$, for $\Lambda$ such that
\begin{equation*}
\int_{\scrF} f(n_zg)\,dm(z)=0.
\end{equation*}
Now, $f$ is continuous and the above integral is zero, so there exists a $z_0\in\scrF$ s.t. $f(n_{z_0}g)=0$. Using $n_{z_0}g=gk_0^{-1}n_{e^{-t}z_0}k_0=g\exp(e^{-t}X_{z_0,k_0})$, where
\begin{equation*}
X_{z,k}:=\Re(z)\,\mathrm{Ad}_{k^{-1}}(E_+)+\Im(z)\,\mathrm{Ad}_{k^{-1}}(K_+)\qquad z\in\CC,\,k\in K,
\end{equation*}
we have
\begin{align*}
f(g)=f(g)-f(n_{z_0}g)=&-\int_0^{e^{-t}}\frac{d}{dr} f\left(g\exp(rX_{z_0,k_0})\right)\,dr\\\notag& =-\int_0^{e^{-t}}(X_{z_0,k_0}f)\left(n_{e^{t}rz_0}g\right)\,dr.
\end{align*}
Hence, using Lemma \ref{lem2} and $\scrY_{\Gamma}\left(n_{e^{t}rz_0}g\right)\equiv\scrY_{\Gamma}(g)=e^t$, 
\begin{equation*}
|f(g)|\ll \sup_{z\in\scrF,k\in K}\|X_{z,k}f\|_{W^4}\ll \|f\|_{W^5},
\end{equation*}
were the last bound holds since $\scrF$ and $K$ are compact.
\end{proof}
\begin{remark}
By using higher order derivatives, one may improve the bound in Lemma \ref{cusplem1} to $\scrY_{\Gamma}(p)^{-\alpha}$ for \emph{any} $\alpha\geq 0$, provided one can use a Sobolev norm of sufficiently high order (cf. eg. \cite[Lemma 10]{HC}); however we won't need this.
\end{remark}
For non-cuspidal elements of the $\scrC^{s_m}$, we need a stronger pointwise bound than Lemma \ref{lem2} provides. By studying the $K$-type decomposition of $\scrC^{s_m}$, we are able to get the following bound:
\begin{lem}\label{reslem1}Let $s\in(1,2)$. For functions in $\scrC^{s}\cap W^5(\GaG)$, we have
\begin{equation*}
|f(p)|\ll_{\Gamma,s} \scrY_{\Gamma}(p)^{2-s}\|f\|_{W^5}.
\end{equation*}
\end{lem}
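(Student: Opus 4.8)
The plan is to imitate the argument of Lemma \ref{cusplem1}, but to use the refined knowledge that $f$ lies in a specific complementary-series subrepresentation $\scrC^s\cong\scrP^{(0,2s-2)}$. The point of Lemma \ref{cusplem1} was that a \emph{cuspidal} function has mean zero along every closed $N$-orbit, which forced a zero of $f$ in the fundamental domain and hence a favourable estimate. For $f\in\scrC^s$ this mean is no longer zero, but it is controlled: the $K$-invariant vector in $\scrP^{(0,\nu)}$ is (up to normalisation) an eigenfunction $\phi$ of $-\Delta$ on $\scrM$ with eigenvalue $1-\tfrac{\nu^2}{4}=s(2-s)$, and such an Eisenstein-type/residual eigenfunction grows in the cusp at the controlled rate $\scrY_\Gamma(p)^{2-s}$ (this is the classical growth rate of a small-eigenvalue eigenfunction, and it is exactly where the exponent $2-s$ comes from). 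So the first step is to reduce, via the $K$-type decomposition of $\scrC^s$, to understanding the constant term along a cusp of a vector in $\scrC^s$, and to establish that the mean of $f$ over $(\Gamma\cap N)\backslash N g$ is $\ll_{\Gamma,s}\|f\|_{W^5}\scrY_\Gamma(g)^{2-s}$.

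Concretely, I would proceed as follows. First, as in Lemma \ref{cusplem1}, normalise so that $\Gamma$ has a cusp at $\infty$ with $\Lambda=\{z:n_z\in\Gamma\}$ unimodular, and write $p=\Gamma g$ with $g=na_tk_0$, $\scrY_\Gamma(p)=e^t>2$. Decompose $f=\sum_k f_k$ into its $K$-isotypic components; since $\scrC^s\cong\scrP^{(0,2s-2)}$ and the $K$-type multiplicities of a complementary series representation of $\SL(2,\CC)$ are known and finite in each degree, and since applying elements of $\scrU(\fg)$ shifts $K$-types boundedly, it suffices to bound each $f_k(p)$ with a loss depending only on $k$, and then to control the tail using that $\|f_k\|_{W^5}$ is summable after trading a few derivatives (the Casimir $\Omega_1$ acts as the fixed scalar $s(2-s)-1$, which lets one estimate high $K$-types). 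Second, on the $K$-fixed part $f_0$, which descends to an eigenfunction of $-\Delta$ on $\scrM$ with eigenvalue $s(2-s)$, expand the constant term in the Fourier series along $\Lambda$: the zeroth Fourier coefficient satisfies the ODE $y^2 u'' - y\,u' = s(2-s)u$ (in the cusp coordinate), whose two solutions are $y^{s}$ and $y^{2-s}$; $L^2$-integrability near the cusp, or membership in $\scrC^s$ rather than its ``dual,'' forces the $y^{2-s}$ branch, giving $|f_0|\ll \scrY_\Gamma(g)^{2-s}$ times a coefficient controlled by $\|f\|_{L^2}\le\|f\|_{W^5}$. The nonzero Fourier modes decay and are handled exactly as the oscillatory terms in the standard constant-term estimate, contributing nothing worse. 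Third, having bounded the mean of $f$ over the $N$-orbit through $g$ by $\ll_{\Gamma,s}\|f\|_{W^5}\scrY_\Gamma(g)^{2-s}$, I would run the telescoping argument of Lemma \ref{cusplem1}: pick $z_0\in\scrF$ with $f(n_{z_0}g)$ equal to this mean (which exists by continuity and the mean value property), write $n_{z_0}g=g\exp(e^{-t}X_{z_0,k_0})$ with $X_{z,k}$ as defined before Lemma \ref{cusplem1}, and integrate the derivative:
\begin{equation*}
f(g)=f(n_{z_0}g)-\int_0^{e^{-t}}(X_{z_0,k_0}f)\big(n_{e^t r z_0}g\big)\,dr.
\end{equation*}
The boundary term is $\ll_{\Gamma,s}\|f\|_{W^5}\scrY_\Gamma(g)^{2-s}$ by the mean bound, and for the integral we apply Lemma \ref{lem2} to $X_{z_0,k_0}f\in W^4(\GaG)$ together with $\scrY_\Gamma(n_{e^t r z_0}g)\equiv\scrY_\Gamma(g)=e^t$, getting $\ll \|f\|_{W^5}\,e^{-t}\cdot e^{t}=\|f\|_{W^5}$, which is dominated by $\|f\|_{W^5}\scrY_\Gamma(g)^{2-s}$ since $\scrY_\Gamma(g)>2$ and $2-s>0$. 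Combining gives $|f(p)|\ll_{\Gamma,s}\scrY_\Gamma(p)^{2-s}\|f\|_{W^5}$ in the cusp; for $p$ in a fixed compact part of $\GaG$ (where $\scrY_\Gamma\asymp 1$) the bound is immediate from Lemma \ref{lem2} (or a direct Sobolev embedding), and similarly in the cocompact case by Definition \ref{compscrYdef}.

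The main obstacle is the second step: making precise and quantitative the claim that a vector in $\scrC^s$ (as opposed to an arbitrary $\Delta$-eigenfunction with the same eigenvalue, which could contain the $y^s$-growth piece) has constant term growing like $\scrY_\Gamma^{2-s}$ with a constant controlled by a low-order Sobolev norm. One must argue that the $y^s$ solution is excluded — either because $\scrC^s$ is realised inside $L^2(\GaG)$ so its $K$-fixed vectors are genuinely square-integrable eigenfunctions on $\scrM$ (ruling out the non-$L^2$ branch $y^s$ when $s>1$), or by appealing directly to the known structure of the residual/embedded complementary series — and one must track the dependence of the implied constant on $s$ and the cusp data uniformly, which is why the statement carries $\ll_{\Gamma,s}$. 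The bookkeeping for the $K$-type sum (ensuring only finitely many types matter and the tail is absorbed using that $\Omega_1$ acts as a scalar and costs controlled powers of the $K$-type parameter) is routine but needs care to keep the Sobolev order at $5$.
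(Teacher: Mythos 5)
Your proposal follows essentially the same route as the paper: decompose $\scrC^s$ into $K$-types, compute the constant term along the cusp via the ODEs coming from the Casimir eigenvalues (obtaining the two solutions $e^{(2-s)t}$ and $e^{st}$ and discarding the latter by square-integrability of $f$), and then run the mean-value/telescoping argument of Lemma \ref{cusplem1} with the now nonzero constant term as the boundary contribution. One detail is wrong as stated: you propose to control the tail of the $K$-type sum ``using that the Casimir $\Omega_1$ acts as the fixed scalar $s(2-s)-1$,'' but $\Omega_1$ acts as that scalar on \emph{all} of $\scrC^s$ and therefore carries no information whatsoever about the $K$-type parameter; the correct tool is the Casimir $\Omega_K$ of $K$, which acts as $l(l+1)$ on the $(2l+1)$-dimensional type $V_l$ and is dominated by the $W^4$-norm --- this is exactly how the paper closes the sum, via $\|\Omega_K^2 f\|^2=\sum_{l,j}|d_{l,j}|^2\big(l(l+1)\big)^4$ together with the bound $|A_{l,0}|\ll\sqrt{2l+1}$ on the constant-term coefficients. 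With that substitution (and noting that the paper treats all $K$-types uniformly by showing the constant terms of the $j\neq 0$ basis vectors vanish identically, rather than reducing everything to the spherical vector), your argument matches the paper's.
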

\begin{proof}
 As in \cite[Sections 4.1-4.4]{Vino} and \cite[Sections 2.1-2.3]{LeeOh}, we can decompose $\scrC^{s}$ into irreducible $K$ ($=\mathrm{SU} (2)$) representations $V_l$, each with multiplicity one in $\scrC^{s}$. For each $l$ ($l$ is a non-negative integer), $V_l$ is a $2l+1$-dimensional $K$-invariant vector space. There then exists an orthonormal basis of $\scrC^{s}$ consisting of smooth functions $\varphi_{l,j}$, $-l\leq j\leq l$, aligned with the $K$-type decomposition (i.e. $\varphi_{l,j}\in V_l$) satisfying
\begin{equation}\label{resJno}
J\varphi_{l,j}=ij\varphi_{l,j},
\end{equation}
and 
\begin{equation*}
\Omega_K\varphi_{l,j}=l(l+1)\varphi_{l,j},
\end{equation*}
where $\Omega_K=-J^2-\sfrac 1 4 ( (E_+-E_-)^2+(K_++K_-)^2)$ (cf. \cite[Section 4.4]{Vino}).
For $k\in K$, let $\mathbf{A}_l(k)$ be the \emph{unitary} $(2l+1)\times (2l+1)$ matrix defined by
\begin{equation*}
\mathbf{A}_l(k)= \big( a_{l,m,n}(k) \big)_{-l\leq m,n\leq l}\;\quad \mathrm{with}\,\, a_{l,m,n}(k)=\langle \pi(k) \varphi_{l,j}, \varphi_{l,m}\rangle.
\end{equation*}
Note that  
\begin{equation}\label{resKtrans}
\varphi_{l,j}(pk)=\sum_{m=-l}^{l} a_{l,j,m}(k)\varphi_{l,m}(p).
\end{equation} 
As in the proof of Lemma \ref{cusplem1}, we may assume $\Gamma$ has a cusp at infinity, normalized so that $\Lambda=\lbrace z\in\CC\;:n_z\in\Gamma\rbrace$ is a unimodular lattice in $\CC$, and restrict our attention to points lying in the cuspidal region at infinity (that is to say points $p=\Gamma g$, with $g=na_tk$ and $\scrY_{\Gamma}(g)=e^t$). Let $\scrF$ be a fundamental parallelogram in $\CC$ for $\Lambda$, and define
\begin{equation*}
\phi_{l,j}(g)=\int_{\scrF} \varphi_{l,j}(n_z g)\,dm(z).
\end{equation*}
By abusing notation slightly, let $\phi_{l,j}(t):=\phi_{l,j}(a_t)$. We note that $\phi_{l,j}$ is left $N$-invariant, so
\begin{equation*}
\Omega_1\phi_{l,j}(t)=\big(H^2-J^2-2H+E_{+}E_{-}-K_{+}K_{-}\big)\phi_{l,j}(t)=\big(H^2-J^2-2H\big)\phi_{l,j}(t),
\end{equation*}
and
\begin{equation*}
\Omega_2\phi_{l,j}(t)=\big(2HJ-2J+E_{+}K_{-}+K_{+}E_{-}\big)\phi_{l,j}(t)=\big(2HJ-2J\big)\phi_{l,j}(t).
\end{equation*}
In particular, since $\Omega_1\phi_{l,j}=s(s-2)\phi_{l,j}$ and $\Omega_2\phi_{l,j}=0$, we get that
\begin{equation}\label{resODE2}
\phi_{l,j}''(t)+j^2\phi_{l,j}(t)-2\phi_{l,j}'(t)=s(s-2)\phi_{l,j}(t)
\end{equation}
and
\begin{equation}\label{resODE1}
2ij\phi_{l,j}'(t)-2ij\phi_{l,j}(t)=0.
\end{equation}
Assuming $j\neq 0$, solving \eqref{resODE1} gives $\phi_{l,j}(t)=A_{l,j}e^{t}$,
for some $A_{l,j}\in\CC$. Substituting this into \eqref{resODE2} and noting that $j^2-(s-1)^2\neq 0$ (since $j^2\geq 1$, and $(s-1)^2\in(0,1)$) gives $A_{l,j}=0$ (i.e. for $j\neq 0$, $\phi_{l,j}(t)$ is identically zero). When $j=0$, we solve \eqref{resODE2} (\eqref{resODE1} gives no information), giving
\begin{equation*}
\phi_{l,0}(t)=A_{l,0}e^{(2-s)t}+B_{l,0}e^{st},
\end{equation*}
where $A_{l,0}, B_{l,0}\in\CC$. We now wish to prove that $B_{l,0}=0$. Note that
\begin{align*}
2l+1=&\sum_{j=-l}^l \|\varphi_{l,j}\|^2= \sum_{j=-l}^l \int_{\GaG}|\varphi_{l,j}(g)|^2\,d\mu_G(g)\\\notag& \gg \sum_{j=-l}^l \int_{\scrF}\int_{2}^{\infty}\int_K|\varphi_{l,j}\big(n_za_tk\big)|^2e^{-2t}\,dk\,dt\,dm(z)\\\notag &\quad \geq  \sum_{j=-l}^l \int_{2}^{\infty}e^{-2t}\int_K  \left|\int_{\scrF}\varphi_{l,j}\big(n_za_tk\big)\,dm(z) \right|^2\,dk\,dt \\ \notag & \qquad= \sum_{j=-l}^l \int_{2}^{\infty}e^{-2t}\int_K  \left| \sum_{m=-l}^l a_{l,j,m}(k)\phi_{l,m}(t) \right|^2\,dk\,dt \\\notag&\quad\qquad= \int_{2}^{\infty}|\phi_{l,0}(t) |^2\,e^{-2t}\int_K \left( \sum_{j=-l}^l  |a_{l,j,0}(k)|^2\right)\,dk\,dt \\\notag &\qquad\qquad= \int_{2}^{\infty}|\phi_{l,0}(t) |^2e^{-2t}\,dt.
\end{align*}
This forces $B_{l,0}$ to be zero, as well as $|A_{l,0}|\ll \sqrt{2l+1}$. Thus, for $g=na_tk$ in the cuspidal region at $\infty$:
\begin{equation*}
\phi_{l,j}(g)=\phi_{l,j}\big(na_tk\big)=a_{l,j,0}(k)\phi_{l,0}\big(a_t\big)=a_{l,j,0}(k)A_{l,0}e^{(2-s)t}=a_{l,j,0}(k)A_{l,0}\scrY_{\Gamma}(g)^{2-s}.
\end{equation*}
 Writing $f=\sum_{l\geq 0} \sum_{|j|\leq l} d_{l,j}\varphi_{l,j}$, and arguing as in Lemma \ref{cusplem1} (that is to say: we first consider the real and imaginary parts of $f$), let $z_0\in \scrF$ be such that 
 \begin{equation*}
\int_{\scrF}f(n_zg)\,dm(z)=f(n_{z_0}g).
 \end{equation*}
 We then have 
\begin{equation}\label{resint2}
\sum_{l\geq 0} \sum_{|j|\leq l} d_{l,j}a_{l,j,0}(k)A_{l,0}\scrY_{\Gamma}(g)^{2-s}=f(g)+\int_{0}^{e^{-2t}} X_{w_0,k}f(n_{e^{2t}rw_0}g)\,dr,
\end{equation}
with $X_{w_0,k}$ defined as in the proof of Lemma \ref{cusplem1}. As before, we can bound the integral in the right-hand side of \eqref{resint2} by $\|f\|_{W^5}$, giving
\begin{equation}\label{resbnd1}
|f(g)|\ll \|f\|_{W^5}+\scrY_{\Gamma}(g)^{2-s}\left|\sum_{l\geq 0} \sum_{|j|\leq l} d_{l,j}a_{l,j,0}(k)A_{l,0}\right|.
\end{equation}
Now, since $f$ is sufficiently smooth, we may apply $\Omega_K^2$ termwise in its $\varphi_{l,j}$ decomposition, hence 
\begin{equation*}
\|f\|_{W^{4}}^2\geq \|\Omega_K^2f\|^2=\sum_{l\geq 0} \sum_{|j|\leq l} |d_{l,j}|^2 \big(l(l+1)\big)^4.
\end{equation*} 
After recalling that $|A_{l,0}|\ll \sqrt{2l+1}$, we may use the Cauchy-Schwartz inequality to bound the sum in the right-hand side of \eqref{resbnd1} by $\|f\|_{W^4}$, giving
\begin{equation*}
|f(g)|\ll \big(1+ \scrY_{\Gamma}(g)^{2-s}\big)\|f\|_{W^5}\ll_s \scrY_{\Gamma}(g)^{2-s}\|f\|_{W^5}.
\end{equation*}
\end{proof}

\section{Proof of Theorem 1}\label{effecsec}
  
\subsection{Proof of Theorem \ref{mainthm}} 
Since $\|\cdot\|_{W^7}$ is a stronger norm than both $\|\cdot\|_{W^4}$ and $\|\cdot\|_{L^2(\GaG)}$ on $W^7(\GaG)$, Lemma \ref{lem2} and the Cauchy-Schwarz inequality imply that for fixed $p$ and $T$, $f\mapsto \frac{1}{\mu_N(B)}\int_B f(pna_{-T})\,d\mu_N(n)-\int_{\GaG} f\,d\mu$ is a bounded linear functional on $W^7(\GaG)$. By the density of $L^2(\GaG)^{\infty}$ in $W^7(\GaG)$, it is therefore sufficient to consider $f\in L^2(\GaG)^{\infty}$. From the discussion in Section \ref{L2decomp}, we decompose $(\rho,L^2(\GaG))$ as the following orthogonal sum of representations
\begin{equation*}
\left(\rho,L^2(\GaG)\right)=(\rho,\CC\varphi_0)\oplus\scrC^{s_1}\oplus \scrC^{s_2}\oplus\ldots \scrC^{s_M}\oplus \left(\rho,L^2(\GaG)_{\mathit{temp}}\right),
\end{equation*}
where $\varphi_0\equiv 1$, the $\scrC^{s_m}$, $1\leq m\leq M$, are defined as in Section \ref{L2decomp}, and $\left(\rho,L^2(\GaG)_{\mathit{temp}}\right)$ decomposes as a direct integral over the \emph{tempered} unitary dual of $G$. At the level of elements of $L^2(\GaG)$, we write this decomposition as
\begin{equation*}
f =\int_{\GaG}f\,d\mu+\sum_{m=1}^Mf_m+f_{\mathit{temp}}.
\end{equation*}
We now apply the operator $\frac{1}{\mu_N(B)}\int_{B}\rho\left(na_{-T}\right) \,d\mu_N(n)$ to the previous equation, giving
\begin{align}\label{avdecomp}
\frac{1}{\mu_N(B)}\int_{B}\rho\left(na_{-T}\right)f \,d\mu_N(n)-\int_{\GaG}f\,d\mu=\sum_{m=1}^M\frac{1}{\mu_N(B)}&\int_{B}\rho\left(na_{-T}\right)f_m \,d\mu_N(n)\\\notag&+\frac{1}{\mu_N(B)}\int_{B}\rho\left(na_{-T}\right)f_{\mathit{temp}} \,d\mu_N(n).
\end{align}
We will use Proposition \ref{Intformlem} on the various summands of \eqref{avdecomp} (recall that we have assumed that $\|f\|_{W^7}<\infty$). Since $\scrC^{s_m}\cong \scrP^{(0,2s_m-2)}$, we may apply the proposition directly to each $f_m$, giving
\begin{align}\label{fmdecomp}
\frac{1}{\mu_N(B)}\int_{B}\rho\left(na_{-T}\right)f_m \,d\mu_N(n)=\int_{-T}^0 F&(T,t)I_{f_m}(V_1,V_2,t)\,dt\\\notag&+\sum_{i=0}^2\frac{F_i(T)}{\mu_N(B)}\int_B \rho(n)(H^if_m)\,d\mu_N(n).
\end{align}
By combining Lemma \ref{reslem1} and Corollary \ref{scrYcor} (also applying Lemma \ref{scrYprops} c) to accomodate for the fact that we require $0\in B'$ in Proposition \ref{scrYIntlem}, but not in Theorem \ref{mainthm}) with the definition of $I_{f_m}$, \eqref{Idef}, we get, for $t\leq 0$,
\begin{equation*}
|I_{f_m}(V_1,V_2,t)(p)|\ll \|f_m\|_{W^7}e^t\left(|t|+\scrY_{\Gamma}(p)^{2-s_m}\right).
\end{equation*}
Similarly, by combining Lemma \ref{reslem1} with Lemma \ref{scrYprops} c), we get, for $0\leq i\leq 2$, $\forall n\in B$,
\begin{equation*}
|(H^{i}f_m)(pn)|\ll \|f_m\|_{W^7}\scrY_{\Gamma}(p)^{2-s_m}.
\end{equation*}
These two bounds, combined with the bounds in Proposition \ref{Intformlem} (iii) and Proposition \ref{reslem1}, allow the 
evaluation of both sides of \eqref{fmdecomp} at $p$, and give
\begin{align*}
\left|\frac{1}{\mu_N(B)}\int_{B}f_m\left(pna_{-T}\right) \,d\mu_N(n)\right|\ll \|f_m\|_{W^7}&(2s_m-2)^{-2}\Big\lbrace e^{(s_m-2)T}\scrY_{\Gamma}(p)^{2-s_m}\\\notag&+\int_{-T}^0 e^{(s_m-2)(T+t)}e^t\left(|t|+\scrY_{\Gamma}(p)^{2-s_m}\right)\,dt\Big\rbrace,
\end{align*}
so
\begin{equation}\label{fmbdd}
\sum_{m=1}^M\left|\frac{1}{\mu_N(B)}\int_{B}f_m\left(pna_{-T}\right) \,d\mu_N(n)\right|\ll \|f\|_{W^7} \sum_{m=1}^M e^{(s_m-2)T}\scrY_{\Gamma}(p)^{2-s_m}.
\end{equation}
 
We now wish to use the same method for $f_{\mathit{temp}}$. The fact that $f_{\mathit{temp}}$ is not necessarily contained in a single irreducible representation complicates matters, and we will need to use the intertwining operators discussed in Section \ref{reptheorysec}. Assume that we have the direct integral decomposition
\begin{equation*}
\left(\rho,L^2(\GaG)_{\mathit{temp}}\right)\cong \left(\int_{\mathsf{Z}}^{\oplus}\pi_{\zeta}\,d\upsilon(\zeta),\int_{\mathsf{Z}}^{\oplus}\scrH_{\zeta}\,d\upsilon(\zeta)\right), 
\end{equation*}
where each $(\pi_{\zeta},\scrH_{\zeta})$ is isomorphic to an element of the tempered unitary dual, and write $f_{\mathit{temp}}=\int_{\mathsf{Z}}f_{\zeta}\,d\upsilon(\zeta)$. We partition $\mathsf{Z}$ into two parts: $\mathsf{Z}_0=\lbrace \zeta\in\mathsf{Z}\,:\, (\pi_{\zeta},\scrH_{\zeta})\cong \scrP^{(0,\nu)},\,\nu\in i\RR\rbrace$, and $\mathsf{Z}_1=\lbrace \zeta\in\mathsf{Z}\,:\, (\pi_{\zeta},\scrH_{\zeta})\cong \scrP^{(n,\nu)},\,n\in \NN_ {>0}\rbrace$, and let $f_{\mathit{temp},0}=\int_{\mathsf{Z}_0}f_{\zeta}\,d\upsilon(\zeta)$, $f_{\mathit{temp},1}=\int_{\mathsf{Z}_1}f_{\zeta}\,d\upsilon(\zeta)$. We then have
\begin{align*}
\frac{1}{\mu_N(B)}&\int_{B}\rho\left(na_{-T}\right)f_{\mathit{temp}} \,d\mu_N(n)=\int_{\mathsf{Z}}\frac{1}{\mu_N(B)}\int_{B}\pi_{\zeta}\left(na_{-T}\right)f_{\zeta} \,d\mu_N(n)\,d\upsilon(\zeta)
\\\notag&\!\!\!\!\!\!=\int_{\mathsf{Z}_0}\frac{1}{\mu_N(B)}\int_{B}\pi_{\zeta}\left(na_{-T}\right)f_{\zeta} \,d\mu_N(n)\,d\upsilon(\zeta)+\int_{\mathsf{Z}_1}\frac{1}{\mu_N(B)}\int_{B}\pi_{\zeta}\left(na_{-T}\right)f_{\zeta} \,d\mu_N(n)\,d\upsilon(\zeta).
\end{align*}
We now use Proposition \ref{Intformlem}, firstly on $f_{\mathit{temp},0}$, which gives
\begin{align*}
\int_{\mathsf{Z}_0}\frac{1}{\mu_N(B)}\int_{B}\pi_{\zeta}\left(na_{-T}\right)f_{\zeta} \,d\mu_N(n)\,d\upsilon(\zeta)=&\int_{\mathsf{Z}_0}\int_{-T}^0 F^{\zeta}(T,t)I_{f_{\zeta}}(V_1,V_2,t)\,dt\,d\upsilon(\zeta)\\\notag&\!\!\!\!\!+\sum_{i=0}^2\int_{\mathsf{Z}_0}\frac{F_i^{\zeta}(T)}{\mu_N(B)}\!\!\int_B\!\! \pi_{\zeta}(n)d\pi_{\zeta}(H^i)f_{\zeta}\,d\mu_N(n)\,d\upsilon(\zeta),
\end{align*}
where we use the notation $F^{\zeta},$ $F_0^{\zeta}$, $F_1^{\zeta}$ and $F_2^{\zeta}$ to keep track of which $(\pi_{\zeta},\scrH_{\zeta})$ the scalar functions come from. These functions then define, for each $T$, $t$, intertwining operators $Q(T,t)$, $Q_0(T)$, $Q_1(T)$, and $Q_2(T)$ as in \eqref{inter1}. The bounds in Proposition \ref{Intformlem} (ii) give
\begin{equation*}
\|Q(T,t)f_{\mathit{temp},0}\|\!\ll\! e^{-(T+t)}(T+t)^2\|f_{\mathit{temp},0}\|,\quad \|Q_i(T)f_{\mathit{temp},0}\|\!\ll\! e^{-T}(1+T^2)\|f_{\mathit{temp},0}\|,\;i=1,2,
\end{equation*}
and similarly for any Sobolev norm $\|\cdot\|_{W^m}$. For $Q_0(T)$ we have, after letting $\nu_{\zeta}$ denote the $\nu$ parameter of the principal series representation isomorphic to $(\pi_{\zeta},\scrH_{\zeta})$,
\begin{align}\label{Q0bds}
\|Q_0(T)f_{\mathit{temp},0}\|=&\left(\int_{\mathsf{Z}_0}|F_0^{\zeta}(T)|^2\|f_{\zeta}\|^2_{\scrH_{\zeta}}\,d\upsilon(\zeta)\right)^{1/2}\\\notag&\ll e^{-T}(1+T^2)\left(\int_{\mathsf{Z}_0}(1+|\nu_{\zeta}|^2)^2\|f_{\zeta}\|^2_{\scrH_{\zeta}}\,d\upsilon(\zeta)\right)^{1/2}\\\notag&\ll e^{-T}(1+T^2)\left(\int_{\mathsf{Z}_0}\left(\|f_{\zeta}\|^2_{\scrH_{\zeta}}+\|d\pi_{\zeta}(\Omega_1)f_{\zeta}\|^2_{\scrH_{\zeta}}\right)\,d\upsilon(\zeta)\right)^{1/2}
\\\notag&\ll e^{-T}(1+T^2)\|f_{\mathit{temp},0}\|_{W^2}.
\end{align}
These operators intertwine with the action of $G$, giving
\begin{align*}
\frac{1}{\mu_N(B)}\int_{B}\rho\left(na_{-T}\right)f_{\mathit{temp},0} \,d\mu_N(n)=\int_{-T}^0& I_{Q(T,t)f_{\mathit{temp},0}}(V_1,V_2,t)\,dt\\&+ \sum_{i=0}^2\frac{1}{\mu_N(B)}\!\!\int_B\!\!\rho(n)(Q_i(T)H^if_{\mathit{temp},0})\,d\mu_N(n).
\end{align*}
The previously discussed bounds, combined with Lemma \ref{lem2}, 
Corollary \ref{scrYcor}, and Lemma \ref{scrYprops} c) again allow 
evaluation at $p$, and give
\begin{equation*}
|I_{Q(T,t)f_{\mathit{temp},0}}(V_1,V_2,t)(p)|\ll \|f_{\mathit{temp},0}\|_{W^6}e^{-(T+t)}(T+t)^2e^t\left(|t|+\scrY_{\Gamma}(p)\right),
\end{equation*}
and $\forall n\in B$,
\begin{equation*}
|Q_i(T)H^if_{\mathit{temp},0}(pn)|\ll \|f_{\mathit{temp},0}\|_{W^6}e^{-T}(1+T^2)\scrY_{\Gamma}(p)\qquad i=0,1,2.
\end{equation*}
Combining these bounds gives
\begin{align*}
\left|\frac{1}{\mu_N(B)}\int_{B}f_{\mathit{temp},0}\left(pna_{-T}\right) \,d\mu_N(n)\right|\ll \|f_{\mathit{temp},0}\|_{W^6}&\Big\lbrace e^{-T}(1+T^2)\scrY_{\Gamma}(p)\\\notag&\!\!\!\!\!\!+e^{-T}\int_{-T}^0 (T+t)^2\left(|t|+\scrY_{\Gamma}(p)\right)\,dt\Big\rbrace,
\end{align*}
so
\begin{align}\label{ftemp0bdd}
\left|\frac{1}{\mu_N(B)}\int_{B}f_{\mathit{temp},0}\left(pna_{-T}\right) \,d\mu_N(n)\right|\ll \|f\|_{W^6} \left\lbrace e^{-T}(1+T^3)\scrY_{\Gamma}(p)+e^{-T}T^4\right\rbrace.
\end{align}
We proceed in the same manner for $f_{\mathit{temp},1}$: Proposition \ref{Intformlem} (i) is used, and we define intertwining operators $Q$, $Q_0$, $Q_1$ and $Q_2$ w.r.t. the functions $F, F_0,F_1,F_2$ to get
\begin{align*}
\frac{1}{\mu_N(B)}\int_{B}\rho\left(na_{-T}\right)f_{\mathit{temp},1} \,d\mu_N(n)=\int_{-\infty}^0& I_{Q(T,t)f_{\mathit{temp},1}}(U_1,U_2,t)\,dt\\&+ \sum_{i=0}^2\frac{1}{\mu_N(B)}\!\!\int_B\!\!\rho(n)(Q_i(T)H^if_{\mathit{temp},1})\,d\mu_N(n).
\end{align*}
Here $\|Q_2(T)H^2f_{\mathit{temp},1}\|\ll e^{-T} (1+T)\|f_{temp,1}\|_{W^2}$, and, in a similar manner to \eqref{Q0bds}, we get $\|Q_i(T)H^{i}f_{\mathit{temp},1}\|\ll e^{-T} (1+T)\|f_{temp,1}\|_{W^{2+i}}$ for $i=1,2$. As before, we now use Lemmas \ref{lem2} and \ref{scrYprops} b) to get, for $i=0,1,2$, $\forall n\in B$
\begin{equation*}
|Q_i(T)H^{i}f_{\mathit{temp},1}(pn)|\ll \|f\|_{W^7}e^{-T}(1+T)\scrY_{\Gamma}(p).
\end{equation*}
Lemma \ref{lem2}, Corollary \ref{scrYcor}, Lemma \ref{scrYprops} c), and Proposition \ref{Intformlem} (i) give
\begin{equation*}
|I_{Q(T,t)f_{\mathit{temp},1}}(U_1,U_2,t)(p)|\ll \|f_{\mathit{temp},1}\|_{W^7}e^t\left(|t|+\scrY_{\Gamma}(p)\right)\begin{cases} e^{-\frac{1}{2}(T+t)}\!\!&\mathrm{if}\,\, t\leq -T\\ e^{-(T+t)}(1+t+T)\quad \!\!\!\!\!\!\!&\mathrm{if}\,\,t\geq-T \end{cases}.
\end{equation*}
So
\begin{align*}
\left|\frac{1}{\mu_N(B)}\int_{B}f_{\mathit{temp},1}\left(pna_{-T}\right) \,d\mu_N(n)\right|\ll &\|f_{\mathit{temp},1}\|_{W^7}\Big\lbrace e^{-T}(1+T)\scrY_{\Gamma}(p)\\\notag&+\int_{-T}^0 e^{-(T+t)}(1+t+T)e^t\left(|t|+\scrY_{\Gamma}(p)\right)\,dt\\\notag&+\int_{-\infty}^{-T} e^{-\frac{1}{2}(T+t)}e^t\left(|t|+\scrY_{\Gamma}(p)\right)\,dt\Big\rbrace,
\end{align*}
giving
\begin{align}\label{ftemp1bdd}
\left|\frac{1}{\mu_N(B)}\int_{B}f_{\mathit{temp},1}\left(pna_{-T}\right) \,d\mu_N(n)\right|\ll \|f\|_{W^7}\lbrace e^{-T}(1+T^2)\scrY_{\Gamma}(p)+e^{-T}T^3\rbrace.
\end{align}
Evaluating both sides of \eqref{avdecomp} at $p$, and using the bounds \eqref{fmbdd}, \eqref{ftemp0bdd}, and \eqref{ftemp1bdd} gives
\begin{align*}
\left| \!\frac{1}{\mu_N(B)}\!\!\int_{B}\!\! f\big(pna_{-T}\big)\,d\mu_N(n)\!-\!\!\!\int_{\GaG}\!f\,d\mu\right|\!\!\ll\! \|f\|_{W^7} \Big\lbrace\!  \big( e^{-T}\scrY_{\Gamma}(p)\big)^{2-s_1}\!\!\!+\!e^{-T}\!(1+T^3)\scrY_{\Gamma}(p)\!\!\,+\!e^{-T}T^4\! \Big\rbrace\!.
\end{align*}
\hspace{425.5pt}\qedsymbol
\subsection{The dependency on $B'$}
We now make explicit $C(\Gamma,B')$'s dependency on the set $B'$ in Theorem \ref{mainthm}, which gives our most exact result:
\begin{thmbis}{mainthm}\label{mainthmprime}
Let $B'$ be a connected compact subset of $\CC$ such that there exists a piecewise smooth parametrization $\gamma:\S^1\rightarrow \CC$ of $\partial B'$ with the following properties: i) for all $t\in \S^1$ where $\gamma'(t)$ exists, $|\gamma'(t)|=L$, the arc length of $\partial B'$, and ii) there exists $c>0$ s.t. $\forall t_1,t_2\in \S^1$, $|\gamma(t_1)-\gamma(t_2)| \geq c|t_1-t_2|$. Assume that $\mathrm{diam}(B')=R$, and let $z_0$ be a point in $B'$ with minimal distance to zero. Then the conclusion of Theorem \ref{mainthm} holds with 
\begin{equation*}
C(\Gamma,B')\ll_{\Gamma}\frac{L^2(1+R^2)(1+|z_0|^2)(1+c^{-1})}{m(B')}
.
\end{equation*}
\end{thmbis}
\begin{proof}
Going through the proof of Theorem \ref{mainthm}, we find that there are two types of bounds which implicitly depend on $B'$. The first of these are bounds of the type: for $\varphi\in W^m(\GaG)$,
\begin{equation*}
\left|\frac{1}{\mu_N(B)}\int_B \varphi(pn)\,dn\right|\ll \|\varphi\|_{W^m}\scrY_{\Gamma}(p)^{\alpha},
\end{equation*}
where $\alpha\in (0,1]$. By Lemma \ref{scrYprops} c), and either Lemma \ref{lem2} or Lemma \ref{reslem1}, we have
\begin{equation}\label{Bbd1}
\left|\frac{1}{\mu_N(B)}\int_B \varphi(pn)\,dn\right|\ll_{\Gamma} (1+\left(|z_0|+R\right)^2) \|\varphi\|_{W^m}\scrY_{\Gamma}(p)^{\alpha}.
\end{equation} 
The second type of bound where we previously neglected to explicitly write out the dependency on $B'$ are bounds of the form
\begin{equation*}
|I_{\varphi}(X,Y,t)(p)|\ll \|\varphi\|_{W^m}e^t\left(|t|+\scrY_{\Gamma}(p)^{\alpha}\right).
\end{equation*}
From \eqref{Idef}, we have
\begin{align*}
|I_{\varphi}(X,Y,t)(p)|=&\left|\frac{e^t}{m(B')}\oint_{\partial B'}\big(Y\varphi\big)(pn_{x+iy}a_t)\,dx+\big(X\varphi\big)(pn_{x+iy}a_t)\,dy\right|\\&\ll_{\Gamma} \frac{e^t\|\varphi\|_{W^m}}{m(B')}\int_{\partial B'-z_0}\scrY_{\Gamma}\big(pn_{z_0}n_za_t\big)^{\alpha}\,|dz|,
\end{align*}
where $m$ and $\alpha$ are chosen according to either Lemma \ref{lem2} or Lemma \ref{reslem1}. We now apply Corollary \ref{scrYcor}, giving
\begin{align*}
|I_{\varphi}(X,Y,t)(p)|\ll_{\Gamma,\alpha} \frac{e^t\|\varphi\|_{W^m}}{m(B')}\cdot\frac{L^2(1+R^2)}{c}\big(|t|+\scrY_{\Gamma}(gn_{z_0})^{\alpha}\big).
\end{align*}
Once again, we use Lemma \ref{scrYprops} c) to get
\begin{equation}\label{Bbd2}
|I_{\varphi}(X,Y,t)(p)|\ll_{\Gamma,\alpha}\frac{L^2(1+R^2)(1+|z_0|^2)}{m(B')\,c}\|\varphi\|_{W^m}e^t\left(|t|+\scrY_{\Gamma}(p)^{\alpha}\right).
\end{equation} 
Combining \eqref{Bbd1} and \eqref{Bbd2} gives the stated bound on $C(\Gamma,B')$.
\end{proof}

We conclude by proving a generalisation of a result stated (though not proved) on \cite[pg. 228]{Sod} on the equidistribution of translates of rectangular pieces of horospheres. Note that while \cite{Sod} requires the horosphere in question to be closed, this assumption is not needed in the following:

\begin{cor}\label{mainthmprimecor}
Let $\omega_1$, $\omega_2$ be a basis for $\CC$ over $\RR$ such that $|\omega_1|=|\omega_2|=1$. For $e^{-T}\leq\delta_1\leq\delta_2$,
\begin{align*}
\Bigg|\frac{1}{\delta_1\delta_2}\int_0^{\delta_2}\int_0^{\delta_1}f&\big(pn_{\omega_1x+\omega_2y}a_{-T}\big)\,dx\,dy-\int_{\GaG}f\,d\mu\Bigg| \\ &\ll_{\Gamma,\omega_1,\omega_2}\|f\|_{W^7}\Bigg\lbrace e^{-T}T^4+\Upsilon^{2-s_1}+(1+(T+\log\delta_1)^3)\Upsilon\Bigg\rbrace,
\end{align*}
where $\Upsilon=\scrY_{\Gamma}(p)e^{-T}(1+\delta_2^2)(1+\delta_1^{-2})$.
\end{cor}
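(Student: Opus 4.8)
The plan is to deduce the corollary from Theorem~\ref{mainthmprime} by a rescaling combined with a partition of the (possibly very elongated) rectangle into nearly-square cells of unit size. First, after the linear change of variables $z=\omega_1 x+\omega_2 y$ — whose constant Jacobian cancels between the integral and the normalising factor — the average in question equals $\frac{1}{\mu_N(B)}\int_B f(pna_{-T})\,d\mu_N(n)$ with $B=\lbrace n_z:z\in B'\rbrace$ and $B'$ the parallelogram $\lbrace\omega_1 x+\omega_2 y:x\in[0,\delta_1],\,y\in[0,\delta_2]\rbrace$, which contains $0$ as a corner. Using the relation $a_t n_z a_{-t}=n_{e^t z}$ in the form $n_z a_{-T}=a_s n_{e^{-s}z}a_{-(T+s)}$ with $s:=\log\delta_1$, one gets $p\,n_z a_{-T}=(pa_s)\,n_{z/\delta_1}\,a_{-T'}$ where $T':=T+\log\delta_1\ge 0$ — this is exactly where the hypothesis $e^{-T}\le\delta_1$ is used — and the rescaled variable $z/\delta_1$ now runs over a parallelogram with short side $1$ and long side $\delta_2/\delta_1\ge 1$.

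Next I would partition $[0,\delta_2/\delta_1]$ into $J\ll\delta_2/\delta_1$ subintervals $[c_j,c_j+\ell_j]$ with $\ell_j\in[1,2]$ and $\sum_j\ell_j=\delta_2/\delta_1$, and use $a_s n_{\omega_2 c_j}=n_{\delta_1\omega_2 c_j}a_s$ to write the $j$-th piece of the average as $\frac{1}{\mu_N(B_{(j)})}\int_{B_{(j)}}f(q_j n a_{-T'})\,d\mu_N(n)$, where $q_j:=p\,n_{\delta_1\omega_2 c_j}\,a_s$ and $B'_{(j)}:=\lbrace\omega_1 u+\omega_2 v:u\in[0,1],\,v\in[0,\ell_j]\rbrace$. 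Each $B'_{(j)}$ is a parallelogram containing $0$, and its arclength, diameter, area and bi-Lipschitz constant are all bounded above and below by constants depending only on $\omega_1,\omega_2$; hence Theorem~\ref{mainthmprime} applies to it with $C(\Gamma,B'_{(j)})\ll_{\Gamma,\omega_1,\omega_2}1$. Since the full average is the convex combination $\sum_j\frac{\ell_j}{\delta_2/\delta_1}\cdot(\text{$j$-th average})$ of these, and each term is to be compared with $\int f\,d\mu$, it suffices to bound $|(\text{$j$-th average})-\int f\,d\mu|$ uniformly in $j$.

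Applying Theorem~\ref{mainthmprime} to $(q_j,T',B'_{(j)})$ bounds this by $C\|f\|_{W^7}$ times $(e^{-T'}\scrY_\Gamma(q_j))^{2-s_1}+e^{-T'}(T')^4+e^{-T'}(1+(T')^3)\scrY_\Gamma(q_j)$. By Lemma~\ref{scrYprops} c) then b), and using $\delta_1 c_j\le\delta_2$, one obtains $\scrY_\Gamma(q_j)\le\max\lbrace\delta_1,\delta_1^{-1}\rbrace(1+\delta_2^2)\scrY_\Gamma(p)$, whence $e^{-T'}\scrY_\Gamma(q_j)=\delta_1^{-1}e^{-T}\scrY_\Gamma(q_j)\le e^{-T}(1+\delta_1^{-2})(1+\delta_2^2)\scrY_\Gamma(p)=\Upsilon$; this turns the first and third terms into $\Upsilon^{2-s_1}$ and $(1+(T+\log\delta_1)^3)\Upsilon$. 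The middle term $e^{-T'}(T')^4=e^{-(T+\log\delta_1)}(T+\log\delta_1)^4$ is then absorbed into $e^{-T}T^4+\Upsilon^{2-s_1}+(1+(T+\log\delta_1)^3)\Upsilon$ by elementary estimates, using $e^{-T}\le\delta_1$, $\Upsilon\gg_\Gamma e^{-T}(1+\delta_1^{-2})$ and $\Upsilon\gg_\Gamma e^{-T}(1+\delta_2^2)$, splitting according to whether $\delta_1\ge 1$ or $\delta_1<1$ (and, in the latter case, according to the size of $-\log\delta_1$). Taking the maximum over $j$ gives the claimed inequality. The main obstacle is precisely the choice made in the first two steps: a direct application of Theorem~\ref{mainthmprime} to the whole parallelogram is useless because $C(\Gamma,B')$ grows like a positive power of the aspect ratio $\delta_2/\delta_1$, so one must first normalise the short side to $1$ — forcing $s=\log\delta_1$ and hence the shift $T\mapsto T+\log\delta_1$ — and then cut the long direction into $\asymp\delta_2/\delta_1$ roughly-unit cells; the subsequent conversion of the cusp excursions of the base points $q_j$ into the single quantity $\Upsilon$, and the bookkeeping of the powers of $T$ and $\log\delta_1$ coming from the $e^{-T'}(T')^4$ term, is routine but needs care.
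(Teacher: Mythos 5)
Your argument is, up to the order of two operations, the paper's own proof: the paper likewise conjugates by $a_{\log\delta_1}$ so that the time parameter becomes $T'=T+\log\delta_1\ge 0$ (this is exactly where $e^{-T}\le\delta_1$ enters), tiles the rescaled parallelogram into $\asymp\delta_2/\delta_1$ cells with side lengths in $[1,2]$, applies Theorem \ref{mainthmprime} to each cell with the uniform constant $\max_{(u,v)\in[1,2]^2}C(\Gamma,\fB_{u,v})$, writes the full average as a convex combination of the cell averages, and controls $\scrY_{\Gamma}$ at the shifted base points via Lemma \ref{scrYprops} c) and b), arriving at the same quantity $\Upsilon$. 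All of this part of your proposal is correct and matches the source.

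The one step you defer to ``elementary estimates'' --- absorbing $e^{-T'}(T')^4$ into $e^{-T}T^4+\Upsilon^{2-s_1}+(1+(T')^3)\Upsilon$ --- does not close as described, and you should know that this is a genuine soft spot (one the paper's proof also elides: its display simply replaces $e^{-T'}(T')^4$ by $e^{-T}T^4$, even though $t\mapsto e^{-t}t^4$ is \emph{decreasing} for $t\ge4$, so lowering $T$ to $T'<T$ makes this term larger, not smaller). Concretely, suppose $\lambda_1\ge 1$ so that $s_1=1$, take $\scrY_{\Gamma}(p)\asymp_{\Gamma}1$ (recall $\scrY_{\Gamma}\gg_{\Gamma}1$ everywhere) and $\delta_1=\delta_2=T^{-1/2}$, which is admissible for large $T$. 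Then $T'=T-\tfrac12\log T$ and $e^{-T'}(T')^4\asymp e^{-T}T^{9/2}$, whereas $\Upsilon\asymp_{\Gamma} e^{-T}T$, so the claimed right-hand side is $\asymp e^{-T}T^4+e^{-T}T+(1+(T')^3)e^{-T}T\asymp e^{-T}T^4$, smaller than the term to be absorbed by a factor $T^{1/2}$. (When $s_1>1$ the term $\Upsilon^{2-s_1}\gg_{\Gamma}\big(e^{-T}\delta_1^{-2}\big)^{2-s_1}$ does swallow $e^{-T'}(T')^4$, and when $\delta_1$ is bounded below the term $e^{-T}T^4$ suffices; the failure occurs precisely for $s_1=1$ and $\delta_1$ a negative power of $T$.) So either the middle term must be kept as $\delta_1^{-1}e^{-T}(T+\log\delta_1)^4$ in the conclusion, or additional input beyond Theorem \ref{mainthmprime} is needed. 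Since this defect is inherited verbatim from the paper, your proposal faithfully reproduces the intended argument, including its one unjustified step.
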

\begin{proof} Letting $q=\lfloor \frac{\delta_2}{\delta_1}\rfloor-1\geq 0$, decomposing the integral as $\int_0^{\delta_2}\!\!\!\int_0^{\delta_1}\!\!\!=\!\!\sum_{j=0}^{q}\!\int_{j\delta_1}^{(j+1)\delta_1}\!\!\!\int_0^{\delta_1}\!+\!\int_{q\delta_1}^{\delta_2}\!\!\int_0^{\delta_1}$, and defining  $p_j=pn_{j\delta_1\omega_2}$ gives 
\begin{align*}
&\Bigg|\frac{1}{\delta_1\delta_2}\int_0^{\delta_2}\int_0^{\delta_1}f\big(pn_{\omega_1x+\omega_2y}a_{-T}\big)\,dx\,dy-\int_{\GaG}f\,d\mu\Bigg|\\&\qquad\leq \frac{\delta_1}{\delta_2}\sum_{j=0}^{q-1}\Bigg|\frac{1}{\delta_1^2}\int_0^{\delta_1}\int_0^{\delta_1}f\big(p_jn_{\omega_1x+\omega_2y}a_{-T}\big)\,dx\,dy-\int_{\GaG}f\,d\mu\Bigg|\\&\qquad+\frac{\delta_2-q\delta_1}{\delta_2}\Bigg| \frac{1}{\delta_1(\delta_2-q\delta_1)}\int_0^{\delta_2-q\delta_1}\int_0^{\delta_1}f\big(p_qn_{\omega_1x+\omega_2y}a_{-T}\big)\,dx\,dy-\int_{\GaG}f\,d\mu\Bigg|\\&\quad= \frac{\delta_1}{\delta_2}\sum_{j=0}^{q-1}\Bigg|\int_0^{1}\int_0^{1}f\big(p_jn_{\delta_1(\omega_1x+\omega_2y)}a_{-T}\big)\,dx\,dy-\int_{\GaG}f\,d\mu\Bigg|\\&\qquad+\frac{\delta_2-q\delta_1}{\delta_2}\Bigg| \frac{1}{\frac{\delta_2}{\delta_1}-q}\int_0^{\delta_2/\delta_1-q}\int_0^1 f\big(p_qn_{\delta_1(\omega_1x+\omega_2y)}a_{-T}\big)\,dx\,dy-\int_{\GaG}f\,d\mu\Bigg|.
\end{align*}
Note here that $p_jn_{\delta_1(\omega_1x+\omega_2y)}a_{-T}=p_j'n_{\omega_1x+\omega_2y}a_{-(T+\log\delta_1)}$, where $p_j':=p_ja_{\log\delta_1}$. Set $C_{\Gamma}=\max_{(u,v)\in [1,2]^2} C(\Gamma,\fB_{u,v})$, where $\fB_{u,v}$ is the parallelogram spanned by $u\omega_1$ and $v\omega_2$. By Theorem \ref{mainthmprime}, $C_{\Gamma}$ is finite, and the above expression is
\begin{align*}
&\leq C_{\Gamma}\|f\|_{W^7}\Bigg\lbrace \!\!\left(\!\frac{e^{-T}}{\delta_1}\!\right)^{2-s_1}\!\!\bigg(\!\!(1\!-\!q\delta_1\delta_2^{-1})\scrY_{\Gamma}(p_q')^{2-s_1}\!+\!\frac{\delta_1}{\delta_2}\!\sum_{j=0}^{q-1}\scrY_{\Gamma}(p_j')^{2-s_1}\!\!\bigg)+ e^{-T}T^4\\&\qquad\qquad\qquad\qquad +(1+(T+\log\delta_1)^3)\left(\frac{e^{-T}}{\delta_1}\right)\bigg((1\!-\!q\delta_1\delta_2^{-1})\scrY_{\Gamma}(p_q')+\frac{\delta_1}{\delta_2}\sum_{j=0}^{q-1}  \scrY_{\Gamma}(p_j')\bigg)\!\!\Bigg\rbrace.
\end{align*}
By Lemma \ref{scrYprops} b) and c), $\scrY_{\Gamma}(p'_j)\leq\scrY_{\Gamma}(p)(1+j^2\delta_1^2)\max\lbrace\delta_1,\delta_1^{-1}\rbrace$, so for $\alpha\in [0,1]$:
\begin{equation*}
\left(\frac{e^{-T}}{\delta_1}\right)^{\alpha}\bigg((1\!-\!q\delta_1\delta_2^{-1})\scrY_{\Gamma}(p_q')^{\alpha}+\frac{\delta_1}{\delta_2}\sum_{j=0}^{q-1}  \scrY_{\Gamma}(p_j')^{\alpha}\bigg)\leq \left(\frac{\scrY_{\Gamma}(p)e^{-T}(1+\delta_2^2)}{\delta_1\min\lbrace\delta_1,\delta_1^{-1}\rbrace}\right)^{\alpha},
\end{equation*}
which gives the desired result.
\end{proof}

\end{document}